\title{Robust Sensitivity Analysis for Stochastic Systems}
\keywords{sensitivity analysis; model uncertainty; nonparametric method; robust optimization}
\begin{document}
\maketitle


\begin{abstract}

We study a worst-case approach to measure the sensitivity to model misspecification in the performance analysis of stochastic systems. The situation of interest is when only minimal parametric information is available on the form of the true model. Under this setting, we post optimization programs that compute the worst-case performance measures, subject to constraints on the amount of model misspecification measured by Kullback-Leibler (KL) divergence. Our main contribution is the development of infinitesimal approximations for these programs, resulting in asymptotic expansions of their optimal values in terms of the divergence. The coefficients of these expansions can be computed via simulation, and are mathematically derived from the representation of the worst-case models as changes of measure that satisfy a well-defined class of functional fixed point equations.

\end{abstract}
\normalsize


\section{Introduction}
Any performance analysis of stochastic systems requires model assumptions that, to various extent, deviate from the truth. 
Understanding how these model errors affect the analysis is of central importance in stochastic modeling.




This paper concerns the robust approach to measure the impacts of model errors: 
Given a baseline model that is believed to reasonably approximate the truth, without any specific information on how it is misrepresented,
an optimization program is imposed to evaluate the worst-case performance measure among all models that are close to the baseline in the sense of some nonparametric statistical distance, such as Kullback-Leibler (KL).





The main contribution of this paper is to bring in a new line of infinitesimal analysis for the worst-case optimization described above. Namely, taking the viewpoint that the true model is within a small neighborhood of the baseline model, we conduct an asymptotic expansion on the worst-case objective value as the statistical distance that defines the neighborhood shrinks to zero. The primary motivation for this asymptotic analysis is to handle the difficulty in direct solution of these worst-case optimizations in the context of stochastic systems driven by standard i.i.d.~input processes (being non-convex and infinite-dimensional). In particular, the coefficients of our expansions are computable via simulation, hence effectively converting the otherwise intractable optimizations into simulation problems. This approach thus constitutes a tractable framework for nonparametric sensitivity analysis as the expansion coefficients capture the worst-case effect on a performance measure when the model deviates from the truth in the nonparametric space.

\section{Formulation and Highlights}\label{sec:idea}
Define a \emph{performance measure} $E[h(\mathbf{X}_T)]$, where $h(\cdot)$ is a real-valued \emph{cost function}, $\mathbf{X}_T=(X_1,X_2,\ldots,X_T)$ is a sequence of i.i.d.~random objects each lying on the domain $\mathcal{X}$, and $T$ is the time horizon. We assume that the cost function $h$ can be evaluated given its argument, but does not necessarily have closed form. For example, $h(\mathbf{X}_T)$ can be the waiting time of the $100$-th customer in a queueing system, where $\mathbf{X}_T$ is the sequence of interarrival and service time pairs.

Our premise is that there is a \emph{baseline} model that is believed to approximately describe each i.i.d.~$X_t$. The probability distribution that governs the baseline model is denoted $P_0$. Correspondingly, the baseline performance measure is $E_0[h(\mathbf{X}_T)]$, where $E_0[\cdot]$ is the expectation under the product measure $P_0^T=P_0\times P_0\times\cdots\times P_0$. On the other hand, we denote $P_f$ as the distribution that governs the true model (which is unknown), and analogously, $E_f[\cdot]$ as the expectation under the product measure $P_f^T$.

We are interested in the worst (or best)-case optimizations
\begin{equation}
\begin{array}{ll}
\max&E_f[h(\mathbf{X}_T)]\\
\text{subject to}\ &D(P_f\|P_0)\leq\eta\\
&X_t\stackrel{i.i.d.}{\sim}P_f\text{\ \ for\ }t=1,\ldots,T\\
&P_f\in\mathcal P_0
\end{array} \label{max fixed time}
\end{equation}
and
\begin{equation}
\begin{array}{ll}
\min&E_f[h(\mathbf{X}_T)]\\
\text{subject to}\ &D(P_f\|P_0)\leq\eta\\
&X_t\stackrel{i.i.d.}{\sim}P_f\text{\ \ for\ }t=1,\ldots,T\\
&P_f\in\mathcal P_0.
\end{array} \label{min fixed time}
\end{equation}

Here $P_f$ is the decision variable. The space $\mathcal P_0$ denotes the set of all distributions absolutely continuous with respect to the baseline $P_0$. The constraint $D(P_f\|P_0)\leq\eta$ represents the $\eta$-neighborhood surrounding $P_0$, using KL divergence as the notion of distance, i.e.
$$D(P_1\|P_2):=\int\log\frac{dP_1}{dP_2}dP_1=E_2\left[\frac{dP_1}{dP_2}\log\frac{dP_1}{dP_2}\right]$$
where $(dP_1/dP_2)$ is the likelihood ratio, equal to the Radon-Nikodym derivative of $P_1$ with respect to $P_2$, and $E_2[\cdot]$ is the expectation under $P_2$. In brief, the pair of optimizations \eqref{max fixed time} and \eqref{min fixed time} describes the most extreme performance measures among any $P_f$ within $\eta$ units of KL divergence from the baseline $P_0$.

Note that we use the notation $X_t\stackrel{i.i.d.}{\sim}P_f$ in \eqref{max fixed time} and \eqref{min fixed time} to highlight the assumption that $X_t$'s are i.i.d.~each with distribution $P_f$. This i.i.d.~property deems \eqref{max fixed time} and \eqref{min fixed time} non-convex and difficult to solve in general.

The major result of this paper stipulates that when letting $\eta$ go to 0, under mild assumptions on $h$, the optimal values of \eqref{max fixed time} and \eqref{min fixed time} can each be expressed as
\begin{equation}
\max/\min E_{f}[h(\mathbf{X}_T)]=E_0[h(\mathbf{X}_T)]+\zeta_1(P_0,h)\sqrt\eta+\zeta_2(P_0,h)\eta+\cdots \label{expansion}
\end{equation}
where $\zeta_1(P_0,h),\zeta_2(P_0,h),\ldots$ is a sequence of coefficients that can be written explicitly in terms of $h$ and $P_0$.

To avoid redundancy, in the following discussion we will focus on the maximization formulation, and will then point out the adaptation to minimization formulation briefly.

%


\section{Main Results}\label{sec:results}

\subsection{Single-Variable Case}\label{section:basic}
Consider the special case $T=1$, namely the cost function $h$ depends only on a single variable $X\in\mathcal{X}$ in formulation \eqref{max fixed time}:
\begin{equation}
\begin{array}{ll}
\max&E_f[h(X)]\\
\text{subject to}\ &D(P_f\|P_0)\leq\eta\\
&X\sim P_f\\
&P_f\in\mathcal{P}_0.
\end{array} \label{max basic}
\end{equation}
The result for this particular case will constitute an important building block in our later development. 

We make two assumptions. First, for $X\sim P_0$, we impose a finite exponential moment condition on $h(X)$:
\begin{assumption}
The variable $h(X)$ has finite exponential moment in a neighborhood of 0 under $P_0$, i.e.~$E_0[e^{\theta h(X)}]<\infty$ for $\theta\in(-r,r)$ for some $r>0$. 
\label{exponential moment}
\end{assumption}

Second, we impose the following non-degeneracy condition:
\begin{assumption}
The variable $h(X)$ is non-constant under $P_0$. \label{nondegeneracy}
\end{assumption}

The first assumption on the light-tailedness of $h(X)$ is in particular satisfied by any bounded $h(\mathbf X_T)$, which handles all probability estimation problems for instance. The second assumption ensures that the baseline distribution $P_0$ is not a ``locally optimal" model, in the sense that there always exists an opportunity to upgrade the value of the performance measure by rebalancing the probability measure.

Under the above assumptions, we can get a very precise understanding of the objective value when $\eta$ is small:
\begin{thm}
Let $T=1$ in formulation \eqref{max fixed time}, with $h(\cdot):\mathcal X\to\mathbb R$. Suppose Assumptions \ref{exponential moment} and \ref{nondegeneracy} hold. Denote $\psi(\beta)=\log E_0[e^{\beta h(X)}]$ as the logarithmic moment generating function of $h(X)$. When $\eta>0$ is within a sufficiently small neighborhood of 0, the optimal value of \eqref{max fixed time} is given by
\begin{equation}
\max E_f[h(X)]=\psi'(\beta^*) \label{full relation}
\end{equation}
where $\beta^*$ is the unique positive solution to the equation $\beta\psi'(\beta)-\psi(\beta)=\eta$. This implies
\begin{equation}
\max E_f[h(X)]=E_0[h(X)]+\sqrt{2Var_0(h(X))}\eta^{1/2}+\frac{1}{3}\frac{\kappa_3(h(X))}{Var_0(h(X))}\eta+O(\eta^{3/2}) \label{basic expansion}
\end{equation}
where $Var_0(h(X))$ and $\kappa_3(h(X))$ are the variance and the third order cumulant of $h(X)$ under $P_0$ respectively, i.e.
$$Var_0(h(X))=E_0[(h(X)-E_0[h(X)])^2]$$
$$\kappa_3(h(X))=E_0[(h(X)-E_0[h(X)])^3].$$
\label{thm:basic}
\end{thm}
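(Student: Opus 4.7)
The plan is to recognize \eqref{max basic} as a convex optimization over likelihood ratios and extract the optimum via either a Lagrangian argument or the Donsker--Varadhan variational identity. Writing $L = dP_f/dP_0$, the problem becomes $\max E_0[L\,h(X)]$ subject to $E_0[L]=1$, $L\geq 0$, and $E_0[L\log L]\leq \eta$. Form the Lagrangian $E_0[L\,h(X)] - \alpha(E_0[L\log L] - \eta) - \lambda(E_0[L] - 1)$ with $\alpha>0$, and differentiate pointwise in $L$ to obtain the first-order condition $L^* = e^{\beta h(X)}/E_0[e^{\beta h(X)}]$ with $\beta = 1/\alpha$. Alternatively, since for any $\beta>0$ the Donsker--Varadhan inequality gives $\beta E_f[h(X)] \leq \psi(\beta) + D(P_f\|P_0)$, one obtains the upper bound $E_f[h(X)] \leq (\psi(\beta)+\eta)/\beta$ under the constraint, with equality realized by the exponential tilt above. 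Either route identifies the maximizer as an exponentially tilted measure.

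Next I would substitute $L^*$ into the KL constraint and verify the constraint is active: a direct computation gives $D(P_{f^*}\|P_0) = \beta\psi'(\beta) - \psi(\beta)$, so the optimal $\beta^*$ solves $\beta\psi'(\beta) - \psi(\beta) = \eta$. Assumption \ref{exponential moment} ensures $\psi$ is real-analytic on a neighborhood of $0$, and Assumption \ref{nondegeneracy} gives $\psi''(0) = \mathrm{Var}_0(h(X)) > 0$. Setting $g(\beta) = \beta\psi'(\beta) - \psi(\beta)$, one checks $g(0) = g'(0) = 0$ and $g'(\beta) = \beta\psi''(\beta) > 0$ on a small right-neighborhood of $0$, so $g$ is strictly increasing from $0$ on some interval $(0,\beta_0)$, yielding a unique positive solution $\beta^* = \beta^*(\eta)$ for all sufficiently small $\eta > 0$. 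Then $E_{f^*}[h(X)] = E_0[L^* h(X)] = \psi'(\beta^*)$ by differentiation of $\psi$.

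Finally, to get \eqref{basic expansion}, Taylor-expand around $\beta = 0$: $g(\beta) = \tfrac{1}{2}\sigma^2\beta^2 + \tfrac{1}{3}\kappa_3\beta^3 + O(\beta^4)$ where $\sigma^2 = \mathrm{Var}_0(h(X))$ and $\kappa_3 = \kappa_3(h(X))$. Invert with the ansatz $\beta^* = a\eta^{1/2} + b\eta + O(\eta^{3/2})$: matching the $\eta$ coefficient gives $a = \sqrt{2/\sigma^2}$, and matching the $\eta^{3/2}$ coefficient gives $b = -2\kappa_3/(3\sigma^4)$. Substitute into the analogous Taylor expansion $\psi'(\beta^*) = E_0[h(X)] + \sigma^2\beta^* + \tfrac{1}{2}\kappa_3(\beta^*)^2 + O((\beta^*)^3)$ and collect powers of $\eta$: the leading coefficient is $\sigma^2 a = \sqrt{2\sigma^2}$, and the $\eta$ coefficient is $\sigma^2 b + \tfrac{1}{2}\kappa_3 a^2 = -\tfrac{2\kappa_3}{3\sigma^2} + \tfrac{\kappa_3}{\sigma^2} = \tfrac{\kappa_3}{3\sigma^2}$, matching \eqref{basic expansion}. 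The main obstacle is not the computation itself but justifying that the Lagrangian/dual upper bound is attained in the infinite-dimensional admissible space $\mathcal{P}_0$; this requires checking that the candidate tilt lies in $\mathcal{P}_0$ (immediate by absolute continuity) and that its KL divergence equals $\eta$ (so it is feasible and saturates the bound), which is where Assumption \ref{exponential moment} is genuinely used to keep $\psi$ finite and smooth on a neighborhood of $\beta^*$ for small $\eta$.
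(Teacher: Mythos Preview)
Your proposal is correct and follows essentially the same route as the paper: rewrite the problem over likelihood ratios, identify the optimizer as the exponential tilt $L^*\propto e^{\beta h(X)}$ via a Lagrangian argument, use the active KL constraint to get $\beta\psi'(\beta)-\psi(\beta)=\eta$, read off the optimal value as $\psi'(\beta^*)$, and then Taylor-expand and invert exactly as you do. The only cosmetic differences are that the paper proves optimality of the tilt via a Jensen/convexity inequality rather than Donsker--Varadhan, and it closes by explicitly checking that the dual multiplier $\alpha^*=1/\beta^*$ is a genuine minimizer of the dual objective (equivalently that the constraint binds), which you address more informally at the end.
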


We shall explain how to obtain Theorem \ref{thm:basic}. 
The first step is to transform the decision variables from the space of measures to the space of functions. Recall that $P_f$ is assumed to be absolutely continuous with respect to $P_0$, and hence the likelihood ratio $L:=dP_f/dP_0$ exists. Via a change of measure, the optimization problem \eqref{max basic} can be rewritten as a maximization over the likelihood ratios, i.e.~\eqref{max basic} is equivalent to
\begin{equation}
\begin{array}{ll}
\max&E_0[h(X)L(X)]\\
\text{subject to}\ &E_0[L(X)\log L(X)]\leq\eta\\
&L\in\mathcal{L}\end{array} \label{max L basic}
\end{equation}
where $\mathcal{L}:=\{L\in\mathcal{L}_1(P_0):E_0[L]=1,\ L\geq0\text{\ a.s.\ }\}$, and we denote $\mathcal{L}_1(P_0)$ as the $\mathcal{L}_1$-space with respect to the measure $P_0$ (we sometimes suppress the dependence of $X$ in $L=L(X)$ for convenience when no confusion arises). The key now is to find an optimal solution $L^*$, and investigate its asymptotic relation with $\eta$.
To this end, consider the Lagrangian relaxation
\begin{equation}
\max_{L\in\mathcal{L}}E_0[h(X)L]-\alpha(E_0[L\log L]-\eta) \label{max dual basic inner}
\end{equation}
where $\alpha$ is the Lagrange multiplier. The solution of \eqref{max dual basic inner} is characterized by the following proposition:

\begin{proposition}
Under Assumption \ref{exponential moment}, when $\alpha>0$ is sufficiently large, there exists a unique optimizer of \eqref{max dual basic inner} given by
\begin{equation}
L^*(x)=\frac{e^{h(x)/\alpha}}{E_0[e^{h(X)/\alpha}]}. \label{basic solution}
\end{equation}
\label{optimal L}
\end{proposition}

This result is known (e.g.~\cite{hs08}, \cite{pjd00}); for completeness we provide a proof in the appendix. With this proposition, we can prove Theorem \ref{thm:basic}:
\begin{proofof}{Theorem \ref{thm:basic}}
By the sufficiency result in Chapter 8, Theorem 1 in \cite{luenberger69} (shown in Theorem \ref{nonconvex} in the Appendix), suppose that we can find $\alpha^*\geq0$ and $L^*\in\mathcal L$ such that $L^*$ maximizes \eqref{max dual basic inner} for $\alpha=\alpha^*$ and $E_0[L^*\log L^*]=\eta$, then $L^*$ is the optimal solution for \eqref{max L basic}. We will show later that when $\eta$ is close to 0, we can indeed obtain such $\alpha^*$ and $L^*$. For now, assuming that such $\alpha^*$ and $L^*$ exist and that $\alpha^*$ is sufficiently large, the proof of \eqref{full relation} is divided into the following two steps:
%
%
\\

\noindent\textbf{Relation between $\eta$ and $\alpha^*$.} 
By Proposition \ref{optimal L}, $L^*$ satisfies \eqref{basic solution} with $\alpha=\alpha^*$. We have
\begin{align}
\eta&=E_0[L^*\log L^*]=\frac{E_0[h(X)L^*]}{\alpha^*}-\log E_0[e^{h(X)/\alpha^*}] \notag\\
&=\frac{\beta^*E_0[h(X)e^{\beta^*h(X)}]}{E_0[e^{\beta^*h(X)}]}-\log E_0[e^{\beta^*h(X)}]=\beta^*\psi'(\beta^*)-\psi(\beta^*) \label{inversion}
\end{align}
where we define $\beta^*=1/\alpha^*$, and $\psi(\beta)=\log E_0[e^{\beta h(X)}]$ is the logarithmic moment generating function of $h(X)$.
\\

\noindent\textbf{Relation between the optimal objective value and $\alpha^*$.} The optimal objective value is
\begin{equation}
E_0[h(X)L^*]=\frac{E_0[h(X)e^{h(X)/\alpha^*}]}{E_0[e^{h(X)/\alpha^*}]}=\frac{E_0[h(X)e^{\beta^*h(X)}]}{E_0[e^{\beta^*h(X)}]}=\psi'(\beta^*) \label{primal expansion}
\end{equation}

This gives the form in \eqref{full relation}. We are yet to show the existence of a sufficiently large $\alpha^*>0$ such that the corresponding $L^*$ in \eqref{basic solution} satisfies $E_0[L^*\log L^*]=\eta$. To this end,
we use Taylor's expansion to write
\begin{align}
\beta\psi'(\beta)-\psi(\beta)&=\sum_{n=0}^\infty\frac{1}{n!}\kappa_{n+1}\beta^{n+1}-\sum_{n=0}^\infty\frac{1}{n!}\kappa_n\beta^n \notag\\
&=\sum_{n=1}^\infty\left[\frac{1}{(n-1)!}-\frac{1}{n!}\right]\kappa_n\beta^n=\sum_{n=2}^\infty\frac{1}{n(n-2)!}\kappa_n\beta^n \notag\\
&=\frac{1}{2}\kappa_2\beta^2+\frac{1}{3}\kappa_3\beta^3+\frac{1}{8}\kappa_4\beta^4+O(\beta^5) \label{inversion 1}
\end{align}
where $\kappa_n=\psi^{(n)}(0)$ is the $n$-th cumulant of $h(X)$ under $P_0$, and the remainder $O(\beta^5)$ is continuous in $\beta$. By Assumption \ref{nondegeneracy}, we have $\kappa_2>0$. Thus for small enough $\eta$, \eqref{inversion 1} reveals that there is a small $\beta^*>0$ that is a root to the equation $\eta=\beta\psi'(\beta)-\psi(\beta)$. Moreover, this root is unique. This is because by Assumption \ref{nondegeneracy}, $\psi(\cdot)$ is strictly convex, and hence $(d/d\beta)(\beta\psi'(\beta)-\psi(\beta))=\beta\psi''(\beta)>0$ for $\beta>0$, so that $\beta\psi'(\beta)-\psi(\beta)$ is strictly increasing.

Since $\alpha^*=1/\beta^*$, this shows that for any sufficiently small $\eta$, we can find a large $\alpha^*>0$ such that the corresponding $L^*$ in \eqref{basic solution} satisfies \eqref{inversion}, or in other words $E_0[L^*\log L^*]=\eta$.

Next, using \eqref{inversion 1}, we can invert the relation
$$\eta=\frac{1}{2}\kappa_2{\beta^*}^2+\frac{1}{3}\kappa_3{\beta^*}^3+\frac{1}{8}\kappa_4{\beta^*}^4+O({\beta^*}^5)$$
to get
\begin{align*}
\beta^*&=\sqrt{\frac{2\eta}{\kappa_2}}\left(1+\frac{2}{3}\frac{\kappa_3}{\kappa_2}\beta^*+\frac{1}{4}\frac{\kappa_4}{\kappa_2}{\beta^*}^2+O({\beta^*}^3)\right)^{-1/2}\\
&=\sqrt{\frac{2\eta}{\kappa_2}}\left(1-\frac{1}{3}\frac{\kappa_3}{\kappa_2}\beta^*+O({\beta^*}^2)\right)\\
&=\sqrt{\frac{2}{\kappa_2}}\eta^{1/2}-\frac{2}{3}\frac{\kappa_3}{\kappa_2^2}\eta+O(\eta^{3/2}).
\end{align*}
As a result, \eqref{primal expansion} can be expanded as
\begin{align}
E_0[h(X)L^*]&=\psi'(\beta^*)=\kappa_1+\kappa_2\beta^*+\kappa_3\frac{{\beta^*}^2}{2}+O({\beta^*}^3) \notag\\
&=\kappa_1+\kappa_2\left(\sqrt{\frac{2}{\kappa_2}}\eta^{1/2}-\frac{2}{3}\frac{\kappa_3}{\kappa_2^2}\eta+O(\eta^{3/2})\right)+\frac{\kappa_3}{2}\left(\frac{2}{\kappa_2}\eta+O(\eta^{3/2})\right)+O(\eta^{3/2}) \notag\\
&=\kappa_1+\sqrt{2\kappa_2}\eta^{1/2}+\frac{1}{3}\frac{\kappa_3}{\kappa_2}\eta+O(\eta^{3/2}) \notag
\end{align}
which gives \eqref{basic expansion}.

\end{proofof}

\subsection{Finite Horizon Problems}
We now state our main result on formulation \eqref{max fixed time} for $T>1$. This requires correspondences of Assumptions \ref{exponential moment} and \ref{nondegeneracy}. The finite exponential moment condition is now stated as follows:
\begin{assumption}
The cost function $h$ satisfies $|h(\mathbf{X}_T)|\leq\sum_{t=1}^T\Lambda_t(X_t)$ for some deterministic functions $\Lambda_t(\cdot):\mathcal X\to\mathbb R$, where each of the $\Lambda_t(X_t)$'s possesses finite exponential moment under $P_0$, i.e.~$E_0[e^{\theta \Lambda_t(X)}]<\infty$ for $\theta$ in a neighborhood of zero. \label{boundedness}
\end{assumption}
To state our second assumption, we introduce a function $g(\cdot):=\mathcal{G}(h)(\cdot)$ where $\mathcal{G}$ is a functional acted on $h$ and $g:=\mathcal{G}(h)$ maps from $\mathcal{X}$ to $\mathbb{R}$. This function $g(x)$ is defined as the sum of individual conditional expectations of $h(\mathbf{X}_T)$ over all time steps, i.e.
\begin{equation}
g(x)=\sum_{t=1}^Tg_t(x) \label{g definition}
\end{equation}
where $g_t(x)$ is the individual conditional expectation at time $t$, given by
\begin{equation}
g_t(x)=E_0[h(\mathbf{X}_T)|X_t=x]. \label{gt}
\end{equation}
Our second assumption is a non-degeneracy condition imposed on the random variable $g(X)$ for $X\sim P_0$:
\begin{assumption}
The random variable $g(X)$ is non-constant under $P_0$.
\label{not sup}
\end{assumption}

The following is our main result:
\begin{thm}
Under Assumptions \ref{boundedness} and \ref{not sup}, the optimal value of \eqref{max fixed time} satisfies
\begin{equation}
\max E_f[h(\mathbf{X}_T)]=E_0[h(\mathbf{X}_T)]+\sqrt{2Var_0(g(X))}\eta^{1/2}+\frac{1}{Var_0(g(X))}\left(\frac{1}{3}\kappa_3(g(X))+\nu\right)\eta+O(\eta^{3/2}) \label{main expansion fixed time}
\end{equation}
where $Var_0(g(X))$ and $\kappa_3(g(X))$ are the variance and the third order cumulant of $g(X)$ respectively, 
and
\begin{equation}
\nu=E_0[(G(X,Y)-E_0[G(X,Y)])(g(X)-E_0[g(X)])(g(Y)-E_0[g(Y)])]. \label{nu}
\end{equation}
Here $g(\cdot)$ is defined in \eqref{g definition} and \eqref{gt},
and $G(\cdot,\cdot)$ is a function derived from $h$ that is defined as
\begin{equation}
G(x,y)=\sum_{t=1}^T\sum_{s=1,\ldots,T,s\neq t}G_{ts}(x,y) \label{G definition}
\end{equation}
where
\begin{equation}
G_{ts}(x,y)=E_0[h(\mathbf{X}_T)|X_t=x,X_s=y]. \label{Gts}
\end{equation}
Also, $X$ and $Y$ are independent random variables each having distribution $P_0$. \label{thm:fixed time}
\end{thm}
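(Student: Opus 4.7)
The plan is to recast (10) through the univariate likelihood ratio $L(x):=dP_f/dP_0(x)$. Under the i.i.d.\ product structure the objective becomes $E_0[h(\mathbf{X}_T)\prod_{t=1}^T L(X_t)]$ while the feasible set is $\{L\ge 0:E_0[L]=1,\;E_0[L\log L]\le\eta\}$. Forming the Lagrangian with multiplier $\beta>0$ for the KL ball and $\alpha$ for normalization, and taking a functional derivative in $L$, shows that any local maximizer $L^*$ must satisfy the fixed point equation
\begin{equation*}
L^*(x)=\frac{\exp\bigl(\beta^{-1}\sum_{t=1}^T m_t(x;L^*)\bigr)}{E_0\bigl[\exp\bigl(\beta^{-1}\sum_{t=1}^T m_t(X;L^*)\bigr)\bigr]},\qquad m_t(x;L):=E_0\Bigl[h(\mathbf{X}_T)\prod_{s\ne t}L(X_s)\,\Big|\,X_t=x\Bigr].
\end{equation*}
This is precisely the contraction-map representation advertised in the introduction; crucially, at $L\equiv 1$ one has $m_t(x;1)=g_t(x)$, so $\sum_t m_t(x;1)=g(x)$, already identifying the ``symmetrization'' $g$ appearing in the statement.

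Setting $\alpha:=1/\beta$, which must tend to $0$ as $\eta\to 0$, I would then expand $L^*(x)=1+\alpha L_1(x)+\alpha^2 L_2(x)+O(\alpha^3)$ with $E_0[L_i]=0$ and match powers of $\alpha$ in the fixed point equation. Leading order gives $L_1(x)=g(x)-E_0[g(X)]$; at the next order, Taylor-expanding $\prod_{s\ne t}L^*(X_s)$ inside $m_t(x;L^*)$ pulls in the bivariate conditional expectations $G_{ts}$ from \eqref{Gts}, so $L_2(x)$ is determined explicitly in terms of $g(x)^2$ and $E_0[G(x,Y)L_1(Y)]$. Substituting the expansion into the KL constraint, which is active at the optimum, yields
\[E_0[L^*\log L^*]=\tfrac{1}{2}\alpha^2 Var_0(g(X))+O(\alpha^3)=\eta,\]
and inverting gives $\alpha=\sqrt{2\eta/Var_0(g(X))}+c\,\eta+O(\eta^{3/2})$ with $c$ computable from the next KL coefficient.

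Plugging the expansion of $L^*$ into the objective and collapsing each expectation by conditioning on $X_t$ or on $(X_t,X_s)$ produces
\[E_0\Bigl[h(\mathbf{X}_T)\prod_{t=1}^T L^*(X_t)\Bigr]=E_0[h]+\alpha\,Var_0(g(X))+\alpha^2\bigl(A+\tfrac{1}{2}B\bigr)+O(\alpha^3),\]
where $A:=\sum_t E_0[hL_2(X_t)]$ is a ``diagonal'' contribution expressible purely through moments of $g(X)$, while $B:=E_0[G(X,Y)L_1(X)L_1(Y)]=\nu$ after exploiting the symmetry of $G$. Combining this with the inversion of the KL constraint, the $\eta^{1/2}$ coefficient is exactly $\sqrt{2Var_0(g(X))}$, and the $\eta$ coefficient collects the single-variable piece $\kappa_3(g(X))/(3Var_0(g(X)))$ (which matches Theorem~\ref{thm:basic} applied to $g$) together with the genuinely multi-stage correction $\nu/Var_0(g(X))$ arising from $B$, reproducing \eqref{main expansion fixed time}.

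The main obstacle is not the algebra above but its rigorous underpinning. One must establish existence, uniqueness and smoothness of $L^*$ as a fixed point on a neighborhood of the constant function $1$ for all sufficiently small $\eta$, and control its tails in an appropriate $L^p(P_0)$ sense to justify the formal Taylor expansion to order $\alpha^3$ and hence the claimed $O(\eta^{3/2})$ remainder. Assumption~\ref{boundedness} is essential here: its exponential-moment content keeps $m_t(\cdot;L)$ finite and Lipschitz in $L$ near $1$, so that the contraction mapping theorem delivers a unique local fixed point and the operator is differentiable enough to generate the asymptotic series. Assumption~\ref{not sup} then guarantees $Var_0(g(X))>0$, so the leading coefficient is nonzero and the inversion of the KL constraint into a power series in $\eta^{1/2}$ is legitimate.
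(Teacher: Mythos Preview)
Your proposal is correct and follows essentially the same route as the paper: rewrite in terms of the likelihood ratio, characterize $L^*$ as the fixed point of the exponential-tilting map with exponent $g^L(x)=\sum_t m_t(x;L)$, expand $L^*$ to second order in $\beta=1/\alpha^*$, invert the active KL constraint, and substitute into the objective. The one ingredient you gesture at but do not name is how the paper upgrades ``fixed point'' to ``global maximizer of the (non-convex) Lagrangian'': it shows that iterating the contraction map from any $L$ produces a sequence along which the Lagrangian objective is monotone nondecreasing (Lemmas~\ref{monotonicity} and~\ref{lemma:multivariate monotonicity}), and then invokes a non-convex duality result (Proposition~\ref{nonconvex}) rather than complementary slackness directly.
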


The proof of Theorem \ref{thm:fixed time} is laid out in Section \ref{sec:main}.

\subsection{Extension to Random Time Horizon Problems} \label{section:stopping time}
Theorem \ref{thm:fixed time} can be generalized to some extent to problems involving a random time horizon $\tau$. Consider the cost function $h(\mathbf{X}_\tau)$ that depends on the sequence $\mathbf{X}_\tau=(X_1,X_2,\ldots,X_\tau)$. Formulation \eqref{max fixed time} is replaced by
\begin{equation}
\begin{array}{ll}
\max&E_f[h(\mathbf{X}_\tau)]\\
\text{subject to}\ &D(P_f\|P_0)\leq\eta\\
&X_t\stackrel{i.i.d.}{\sim}P_f\text{\ \ for\ }t=1,2,\ldots\\
&P_f\in\mathcal{P}_0
\end{array} \label{max stopping time}
\end{equation}
where $E_f[\cdot]$ is the corresponding expectation with respect to $\mathbf X_\tau$.

To state the result in this direction, we impose either a boundedness or an independence condition on $\tau$:


\begin{assumption}
The random time $\tau$ is a stopping time with respect to $\{\mathcal F_t\}_{t\geq1}$, a filtration that supersets the filtration generated by the sequence $\{X_t\}_{t\geq1}$, namely $\{\mathcal{F}(X_1,\ldots,X_t)\}_{t\geq1}$. Moreover, $\tau$ is bounded a.s. by a deterministic time $T$. The cost function $h$ satisfies $|h(\mathbf{X}_\tau)|\leq\sum_{t=1}^T\Lambda_t(X_t)$ a.s. for some deterministic functions $\Lambda_t(\cdot)$, where $\Lambda_t(X_t)$ each possesses finite exponential moment, i.e.~$E_0[e^{\theta \Lambda_t(X)}]<\infty$, for $\theta$ in a neighborhood of zero. \label{bounded stopping time}
\end{assumption}

%

\begin{assumption}
The random time $\tau$ is independent of the sequence $\{X_t\}_{t\geq1}$, and has finite second moment under $P_0$, i.e.~$E_0\tau^2<\infty$. Moreover, the cost function $h(\mathbf{X}_\tau)$ is bounded a.s.. \label{independence}
\end{assumption}

Next, we also place a non-degeneracy condition analogous to Assumption \ref{not sup}. We define $\tilde{g}:\mathcal{X}\to\mathbb{R}$ as
\begin{equation}
\tilde{g}(x)=\sum_{t=1}^\infty\tilde{g}_t(x) \label{gtilde definition}
\end{equation}
where $\tilde{g}_t(x)$ is given by
\begin{equation}
\tilde{g}_t(x)=E_0[h(\mathbf{X}_\tau);\tau\geq t|X_t=x]. \label{gtilde t}
\end{equation}
Our non-degeneracy condition is now imposed on the function $\tilde{g}$ acted on $X\sim P_0$:
\begin{assumption}
The random variable $\tilde{g}(X)$ is non-constant under $P_0$.
\label{not sup stopping time}
\end{assumption}
 %
%
We have the following theorem:
\begin{thm}
With either Assumption \ref{bounded stopping time} or \ref{independence} in hold, together with Assumption \ref{not sup stopping time}, the optimal value of \eqref{max stopping time} satisfies
\begin{equation}
\max E_f[h(\mathbf{X}_\tau)]=E_0[h(\mathbf{X}_\tau)]+\sqrt{2Var_0(\tilde{g}(X))}\eta^{1/2}+\frac{1}{Var_0(\tilde{g}(X))}\left(\frac{1}{3}\kappa_3(\tilde{g}(X))+\tilde{\nu}\right)
\eta+O(\eta^{3/2}) \label{expansion stopping time}
\end{equation}
where
\begin{equation}
\tilde{\nu}=E_0[(\tilde{G}(X,Y)-E_0[\tilde{G}(X,Y)])(\tilde{g}(X)-E_0[\tilde{g}(X)])(\tilde{g}(Y)-E_0[\tilde{g}(Y)])]. \label{nu tilde}
\end{equation}
Here $\tilde{g}(x)$ is defined in \eqref{gtilde definition} and \eqref{gtilde t}, and $\tilde{G}(x,y)$ is defined as $\tilde{G}(x,y)=\sum_{t=1}^\infty\sum_{\substack{s\geq1\\s\neq t}}\tilde{G}_{ts}(x,y)$, where $\tilde{G}_{ts}(x,y)$ is given by
$$\tilde{G}_{ts}(x,y)=E_0[h(\mathbf{X}_\tau);\tau\geq t\wedge s|X_t=x,X_s=y].$$\label{thm:stopping time}
\end{thm}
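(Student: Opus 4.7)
The plan is to reduce Theorem \ref{thm:stopping time} to Theorem \ref{thm:fixed time} under each alternative hypothesis on $\tau$. Under Assumption \ref{bounded stopping time} the reduction is essentially immediate: since $\tau\le T$ almost surely, the quantity $\tilde h(\mathbf{X}_T):=h(\mathbf{X}_\tau)=\sum_{t=1}^{T}h(\mathbf{X}_t)I(\tau=t)$ is a well-defined cost function on the fixed horizon $T$, and Assumption \ref{bounded stopping time} supplies exactly the majorization required by Assumption \ref{boundedness}. Thus Theorem \ref{thm:fixed time} applies verbatim to $\tilde h$, and what remains is to identify the resulting $g$ and $G$ with $\tilde g$ and $\tilde G$ up to additive constants, then invoke translation invariance.

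The key identity is
$$E_0[\tilde h(\mathbf{X}_T)\mid X_t=x]=E_0[h(\mathbf{X}_\tau);\tau\geq t\mid X_t=x]+E_0[h(\mathbf{X}_\tau);\tau<t],$$
where in the second summand the conditioning drops out because on $\{\tau<t\}$ the tuple $\mathbf{X}_\tau$ and the indicator $I(\tau<t)$ are both $\mathcal{F}_{t-1}$-measurable, hence independent of $X_t$ by the stopping time property together with the i.i.d.\ structure of $\{X_t\}$ (any auxiliary randomness in the filtration being implicitly independent of each $X_t$). Summing over $t=1,\ldots,T$ and using $\tilde g_t\equiv 0$ for $t>T$, the Theorem \ref{thm:fixed time} symmetrization $g$ coincides with $\tilde g$ up to an additive constant; an identical double-conditioning argument on $(X_t,X_s)$ identifies $G$ with $\tilde G$ up to a constant. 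By point \ref{pt:translation invariance} of Section \ref{section:fixed time}, $Var_0(\tilde g(X))$, $\kappa_3(\tilde g(X))$, and $\tilde\nu$ are all invariant under such shifts, so the conclusion \eqref{main expansion fixed time} of Theorem \ref{thm:fixed time} specializes to \eqref{expansion stopping time}.

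Under Assumption \ref{independence} the random time is unbounded, so I would introduce the truncation $\tau^{(T)}:=\tau\wedge T$, which is a bounded stopping time independent of $\{X_t\}$, and apply the Case 1 reduction to $h(\mathbf{X}_{\tau^{(T)}})$ to obtain the analogous expansion with truncated symmetrizations $\tilde g^{(T)}$ and $\tilde G^{(T)}$. The independence of $\tau$ from $\mathbf{X}$ factorizes the individual ingredients as $\tilde g_t(x)=P_0(\tau\geq t)\,E_0[h(\mathbf{X}_\tau)\mid X_t=x,\tau\geq t]$ and analogously for $\tilde G_{ts}$, which together with boundedness of $h$ yields $\sum_t|\tilde g_t(x)|\leq\|h\|_\infty E_0\tau$ and $\sum_{t\neq s}|\tilde G_{ts}(x,y)|\leq\|h\|_\infty E_0[\tau(\tau-1)]$; both are finite under $E_0\tau^2<\infty$. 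Dominated convergence then gives $\tilde g^{(T)}\to\tilde g$ and $\tilde G^{(T)}\to\tilde G$ in the relevant moments as $T\to\infty$.

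The hard part will be interchanging the limits $T\to\infty$ and $\eta\downarrow 0$, i.e.\ controlling the $O(\eta^{3/2})$ remainder in the truncated expansion uniformly in $T$. The natural route is to track the quantitative remainder bounds coming out of the fixed point/contraction map construction that underlies the proof of Theorem \ref{thm:fixed time}, and to verify that the implicit constants depend on the problem only through $\|h\|_\infty$ and the first two moments of $\tau$, all of which are controlled uniformly in $T$ under Assumption \ref{independence}. Once this uniformity is established, a diagonal argument in $(T,\eta)$ passes to the limit and delivers \eqref{expansion stopping time} in the unbounded case.
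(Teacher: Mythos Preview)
Your treatment of the bounded stopping time case (Assumption \ref{bounded stopping time}) is exactly the paper's argument: embed $h(\mathbf{X}_\tau)$ as a fixed-horizon cost on $T$ steps, split each conditional expectation on $\{\tau\geq t\}$ versus $\{\tau<t\}$, observe the latter piece is constant in $x$ by $\mathcal{F}_{t-1}$-measurability, and invoke translation invariance. Nothing to add there.

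For the independence case (Assumption \ref{independence}) your route diverges from the paper's. The paper does \emph{not} take limits of the truncated expansions. Instead it works at the level of the Lagrangian structure: it defines the limiting contraction map $\tilde{\mathcal{K}}$ on $\mathcal{L}$ directly for the infinite-horizon cost, proves that $\tilde{\mathcal{K}}_T$ and $\tilde{\mathcal{K}}$ are contractions with a \emph{common} Lipschitz constant for all small $\beta$, shows $\tilde{\mathcal{K}}_T\to\tilde{\mathcal{K}}$ pointwise, deduces convergence of the fixed points $L^{(T)}\to L^*$, and then proves that $L^*$ itself maximizes the infinite-horizon Lagrangian by passing to the limit in the optimality inequality for each $T$. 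The asymptotic expansion is then carried out once, for $L^*$, exactly as in Section \ref{sec:duality}. Your proposal instead expands each truncated problem and takes $T\to\infty$ in the coefficients and remainder.

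Your approach can be made to work, but two points need strengthening. First, the ``diagonal argument'' is not quite the right framing: what you actually need is that for each fixed $\eta$ the truncated optimal value converges to the full one, i.e.\ $\max_{D(P_f\|P_0)\leq\eta}E_f[h(\mathbf{X}_{\tau\wedge T})]\to\max_{D(P_f\|P_0)\leq\eta}E_f[h(\mathbf{X}_\tau)]$. This is true and easy under Assumption \ref{independence} (since $\tau$ is unchanged by $P_f$, the approximation error is bounded by $2\|h\|_\infty P_0(\tau>T)$ uniformly over feasible $P_f$), but it should be stated explicitly; without it, convergence of coefficients alone tells you nothing about the full problem. Second, the uniform-in-$T$ remainder bound you invoke is exactly the content of the paper's uniform contraction estimate: the range of $\alpha$ (equivalently $\eta$) on which the fixed point construction and the ensuing expansion are valid must not shrink as $T\to\infty$. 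Establishing this rigorously requires essentially the same calculation the paper does for $\tilde{\mathcal{K}}_T$ and $\tilde{\mathcal{K}}$. So your route is viable, but the technical core ends up overlapping substantially with the paper's; the paper's choice to expand once at the limiting fixed point is arguably cleaner because it avoids tracking the $O(\eta^{3/2})$ constant through the truncation.
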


When $\tau$ is a finite deterministic time, Theorem \ref{thm:stopping time} reduces to Theorem \ref{thm:fixed time}. Further relaxation of Assumptions \ref{bounded stopping time} or \ref{independence} to more general stopping times is out of the scope of the present work and will be left elsewhere.

%
\subsection{Discussions}\label{sec:discussions}
We close this section with some discussions:
\\

\noindent 1. Similar results to Theorems \ref{thm:basic}, \ref{thm:fixed time} and \ref{thm:stopping time} hold if maximization formulation is replaced by minimization. Under the same assumptions, the first order term in all the expansions above will have a sign change for minimization formulation, while the second order term will remain the same. For example, the expansion for Theorem \ref{thm:fixed time} becomes
    \begin{equation}
\min E_f[h(\mathbf{X}_T)]=E_0[h(\mathbf{X}_T)]-\sqrt{2Var_0(g(X))}\eta^{1/2}+\frac{1}{Var_0(g(X))}\left(\frac{1}{3}\kappa_3(g(X))+\nu\right)\eta+O(\eta^{3/2}). \end{equation}
For Theorem \ref{thm:basic}, the change in \eqref{full relation} for the minimization formulation is that $\beta^*$ becomes the unique negative solution of the same equation.

These changes can be seen easily by merely replacing $h$ by $-h$ in the analysis.
\\

\noindent 2. The function $g(\cdot)$ defined in \eqref{g definition} is the Gateaux derivative of $E_0[h(\mathbf X_T)]$ with respect to the distribution $P_0$, viewing $E_0[h(\mathbf X_T)]$ as a functional of $P_0$. To illustrate what we mean, consider a perturbation of the probability distribution from $P_0$ to a mixture distribution $(1-\epsilon)P_0+\epsilon Q$ where $Q$ is a probability measure on $\mathcal X$ and $0<\epsilon<1$. Under suitable integrability conditions, one can check that
\begin{equation}
\frac{d}{d\epsilon}\int h(x_1,\ldots,x_T)\prod_{t=1}^Td((1-\epsilon)P_0(x_t)+\epsilon Q(x_t))\Bigg|_{\epsilon=0}=\int g(x)d(Q(x)-P_0(x)).\label{Gateaux}
\end{equation}
In the statistics literature, the function $g(x)-E_0[g(X)]$ has been known as the influence function \cite{hampel1974influence} in which $X_1,\ldots,X_T$ would play the role of i.i.d.~data. Influence functions have been used in measuring the effect on given statistics due to outliers or other forms of data contamination \cite{hampel1974influence,hampel2011robust}.
\\

\noindent 3. Our asymptotic expansions suggest that the square root of KL divergence is the correct scaling of the first order model misspecification effect. We will also show, in Section \ref{sec:others}, that our first order expansion coefficients dominate any first order parametric derivatives under a suitable rescaling from Euclidean distance to KL divergence.
\\

%
%


\noindent 4. Our results can be generalized to situations with multiple random sources and when one is interested in evaluating the model misspecification effect from one particular source. To illustrate, consider $E[h(\mathbf{X}_T,\mathbf{Y})]$ where $\mathbf{Y}$ is some random object potentially dependent of the i.i.d.~sequence $\mathbf X_T=(X_t)_{t=1,\ldots,T}$. Suppose the model for $Y$ is known and the focus is on assessing the effect of model misspecification for $X_t$. Theorem \ref{thm:fixed time} still holds with $h(\mathbf{X}_T)$ replaced by $E[h(\mathbf{X}_T,\mathbf{Y})|\mathbf{X}_T]$, where $E[\cdot]$ is with respect to the known distribution of $\mathbf Y$. This modification can be seen easily by considering $E_f[E[h(\mathbf{X}_T,\mathbf{Y})|\mathbf{X}_T]]$ as the performance measure and $E[h(\mathbf{X}_T,\mathbf{Y})|\mathbf{X}_T]$ as the cost function. Analogous observations apply to Theorems \ref{thm:basic} and \ref{thm:stopping time}.
\\

\noindent 5. KL divergence is a natural choice of statistical distance, as it has been used in model selection in statistics (e.g.~in defining Akaike Information Criterion \cite{akaike1974new}), possesses information theoretic properties \cite{kl51,kullback67,kemperman69,csiszar67}, and is transformation invariant \cite{cover2012elements}. Nevertheless, there are other possible choices of statistical distances, such as those in the $\phi$-divergence class \cite{pardo2005statistical}. 

\section{Connections to Past Literatures} \label{sec:literature}
Here we briefly review two lines of past literatures that are related to our work. First, the worst-case optimization and the use of statistical distance that we consider is related to robust control \cite{hs08} and distributionally robust optimization \cite{ben2013robust,delage2010distributionally}. These literatures consider decision making when full probabilistic description of the underlying model is not available. The problems are commonly set in terms of a minimax objective, where the maximum is taken over a class of models that is believed to contain the truth, often called the uncertainty set \cite{goh2010distributionally,lss06,bb12}. The use of statistical distance such as KL divergence in defining uncertainty set is particularly popular for dynamic control problems \cite{nilim2005robust,iyengar2005robust,pjd00}, economics \cite{hs01,hsAER01,hs03}, finance \cite{chsw02,cwz05,gx12a}, queueing \cite{jls10}, and dynamic pricing \cite{ls07}. In particular, \cite{gx12b} proposes the use of simulation, which they called \emph{robust Monte Carlo}, in order to approximate the solutions for a class of worst-case optimizations that arise in finance. Nevertheless, in all the above literatures, the typical focus is on the tractability of optimization formulations, which often include convexity. Instead, this paper provides a different line of analysis using asymptotic approximations for formulations that are intractable via developed methods yet arise naturally in stochastic modeling.



The second line of related literatures is sensitivity analysis. The surveys \cite{lecuyer91}, \cite{fu06}, \S VII in \cite{ag} and \S7 in \cite{glasserman} provide general overview on different methods for derivative estimation in classical sensitivity analysis, which focus on parametric uncertainty. 
Another notable area is perturbation analysis of Markov chains. These results are often cast as Taylor series expansions in terms of the perturbation of the transition matrix (e.g.~\cite{schweitzer1968perturbation,cao1998maclaurin,heidergott2010series}), where the distances defining the perturbations are typically matrix norms on the transition kernels rather than statistical distances defined between distributions. 
We also note the area of variance-based global sensitivity analysis \cite{saltelli2008global}. This often involves the estimation of the variance of conditional expectations on some underlying parameters, which resembles to some extent the form of the first order coefficient in our main theorems. 
The randomness in this framework can be interpreted from a Bayesian \cite{saltelli2010variance,oakley2004probabilistic} or data-driven \cite{zouaoui2003accounting,ankenman2012quick,barton2013quantifying} perspective, or in the context of Bayesian model averaging, the posterior variability among several models \cite{zouaoui2004accounting}. All these are nonetheless parametric-based.

\section{Mathematical Developments for Finite Horizon Problems} \label{sec:main}
In this section we lay out the analysis of the worst-case optimization for finite time horizon problems when $T>1$, leading to Theorem \ref{thm:fixed time}. Leveraging the idea in Section \ref{section:basic}, we first write the maximization problem \eqref{max fixed time} in terms of likelihood ratio $L$:
\begin{equation}
\begin{array}{ll}
\max&E_0[h(\mathbf{X}_T)\underline{L}_T]\\
\text{subject to}\ &E_0[L(X)\log L(X)]\leq\eta\\
&L\in\mathcal{L}
\end{array} \label{max L}
\end{equation}
where for convenience we denote $\underline{L}_T=\prod_{t=1}^TL(X_t)$, and $X$ as a generic variable that is independent of $\{X_t\}_{t=1,\ldots,T}$ and having identical distribution as each $X_t$. We will follow the recipe from Section \ref{section:basic} to prove Theorem \ref{thm:fixed time}:

\begin{enumerate}
\item Consider the Lagrangian relaxation of \eqref{max L}, and characterize its optimal solution.

\item Find the optimality conditions for \eqref{max L} in terms of the Lagrange multiplier and the Lagrangian relaxation.

\item Using these conditions, expand the optimal value of \eqref{max L} in terms of the Lagrange multiplier and subsequently $\eta$.
\end{enumerate}

The main technical challenge on implementing the above scheme is the product form $\underline{L}_T$ that appears in the objective function in \eqref{max L}. In this regard, our key development is a characterization of the optimal solution of the Lagrangian relaxation via a fixed point equation on a suitable functional space. 

For technical reason, we will look at an equivalent problem with a modified space of $L$ and will introduce a suitable norm and metric. Let $\Lambda(x)=\sum_{t=1}^T\Lambda_t(x)$ where $\Lambda_t(x)$ is defined in Assumption \ref{boundedness}. Define
$$\mathcal{L}(M)=\{L\in\mathcal{L}:E_0[\Lambda(X)L(X)]\leq M\}$$
for $M>0$, and the associated norm $\|L\|_\Lambda:=E_0[(1+\Lambda(X))L(X)]$ and metric
\begin{equation}
\|L-L'\|_\Lambda=E_0[(1+\Lambda(X))|L(X)-L'(X)|]. \label{d metric}
\end{equation}
It is routine to check that $\mathcal{L}(M)$ is complete. We have the following observation:
\begin{lemma}
For any $\eta\leq N$ for some small $N$, formulation \eqref{max L} is equivalent to
\begin{equation}
\begin{array}{ll}
\max&E_0[h(\mathbf{X}_T)\underline{L}_T]\\
\text{subject to}\ &E_0[L(X)\log L(X)]\leq\eta\\
&L\in\mathcal{L}(M)
\end{array} \label{max L M}
\end{equation}
for some large enough $M>0$, independent of $\eta$. \label{modification}
\end{lemma}

The proof of Lemma \ref{modification} is left to Section \ref{sec:proofs}. From now on we will focus on \eqref{max L M}. Its Lagrangian relaxation is given by
\begin{equation}
\max_{L\in\mathcal{L}(M)}E_0[h(\mathbf{X}_T)\underline{L}_T]-\alpha(E_0[L\log L]-\eta). \label{max dual}
\end{equation}
Our optimality characterization for \eqref{max dual} is:

\begin{proposition}
Under Assumption \ref{boundedness}, when $\alpha>0$ is large enough, the unique optimal solution of \eqref{max dual} satisfies
\begin{equation}
L(x)=\frac{e^{g^L(x)/\alpha}}{E_0[e^{g^L(X)/\alpha}]} \label{optimal L fixed time}
\end{equation}
where $g^L(x)=\sum_{t=1}^Tg_t^L(x)$ and $g_t^L(x)=E_0\left[h(\mathbf{X}_T)\prod_{\substack{1\leq r\leq T\\r\neq t}}L(X_r)\Big|X_t=x\right]$. \label{prop:fixed point}
\end{proposition}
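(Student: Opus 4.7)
The plan is to mirror the Lagrangian-plus-stationarity recipe used for Proposition \ref{optimal L}, but to account for the fact that the same function $L$ now appears in all $T$ slots of the product $\underline{L}_T$, which destroys the concavity that made the Jensen argument of Proposition \ref{optimal L} work directly. First I would relax the normalization $E_0[L]=1$ with a Lagrange multiplier $\lambda$ and formally differentiate the Lagrangian at a pointwise value $L(x)$. Since $L$ enters $\underline{L}_T$ through $T$ positions, the variation of $E_0[h(\mathbf{X}_T)\underline{L}_T]$ picks up contributions from each slot, producing $\sum_{t=1}^T E_0[h(\mathbf{X}_T)\prod_{r\neq t}L(X_r)\mid X_t=x] = g^L(x)$. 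Setting the total first-order variation to zero gives $g^L(x) - \alpha(\log L(x)+1) + \lambda = 0$, and eliminating $\lambda$ via $E_0[L]=1$ yields exactly the candidate fixed point relation \eqref{optimal L fixed time}.

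Next I would promote this heuristic into a rigorous existence-and-uniqueness statement for the fixed point by studying the associated operator
$$\Phi(L)(x) := \frac{e^{g^L(x)/\alpha}}{E_0[e^{g^L(X)/\alpha}]}$$
on the complete metric space $(\mathcal{L}(M),\|\cdot\|_\Lambda)$. Assumption \ref{boundedness} together with the moment bound \eqref{q moment} ensures, via Holder's inequality applied against $\prod_{r\neq t}L(X_r)$, that $|g^L(x)|$ is dominated by $\Lambda(x)$ times a finite functional of $L \in \mathcal{L}(M)$, so for $\alpha$ large enough $\Phi$ is a self-map of $\mathcal{L}(M)$. For the contraction estimate I would use the telescoping identity
$$\prod_{r\neq t}L(X_r) - \prod_{r\neq t}L'(X_r) = \sum_{s\neq t}\Bigl(\prod_{r<s,\,r\neq t}L(X_r)\Bigr)\bigl(L(X_s)-L'(X_s)\bigr)\Bigl(\prod_{r>s,\,r\neq t}L'(X_r)\Bigr),$$
which after Holder yields $\|g^L-g^{L'}\|_\Lambda \leq C_M \|L-L'\|_\Lambda$ with $C_M$ independent of $\alpha$; composing with the exponential tilting and normalization contributes an additional factor of order $1/\alpha$, so choosing $\alpha$ large makes $\Phi$ a strict contraction and Banach's theorem delivers a unique $L^* \in \mathcal{L}(M)$ with $\Phi(L^*)=L^*$.

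Finally, to identify $L^*$ as the unique optimizer of \eqref{max dual}, I would argue existence of a maximizer through weak-compactness and lower-semicontinuity: the entropy $E_0[L\log L]$ is convex and hence weakly lower semicontinuous, the bound \eqref{h moment} combined with de la Vallée Poussin uniform integrability along any sequence with controlled KL divergence makes the first term behave well under weak limits, and the objective is bounded above. Any such maximizer must satisfy the Euler-Lagrange condition, which I would establish rigorously by considering admissible variations $L_\epsilon = L + \epsilon u$ with $u$ bounded and $E_0[u]=0$; differentiating at $\epsilon=0$ produces $g^L - \alpha(\log L +1) + \lambda = 0$ $P_0$-a.e., i.e.\ $L = \Phi(L)$, and Banach uniqueness then forces the maximizer to equal $L^*$. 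The main obstacle I expect is the contraction step: controlling $\|g^L-g^{L'}\|_\Lambda$ uniformly over $\mathcal{L}(M)$ requires careful use of Holder inequalities across the $T$-fold product together with the $\Lambda$-weighting, and then propagating this Lipschitz bound through the nonlinear exponential normalization $e^{g/\alpha}/E_0[e^{g/\alpha}]$ without losing the $1/\alpha$ factor needed for contractivity is where the delicate work lies.
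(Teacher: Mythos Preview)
Your contraction step matches the paper's: both establish that the operator $\Phi$ (the paper's $\mathcal{K}$) is a strict contraction on $(\mathcal{L}(M),\|\cdot\|_\Lambda)$ via a telescoping bound on $g^L-g^{L'}$ composed with the Lipschitz estimate for the exponential tilt, and then invoke Banach's theorem. The optimality step, however, is genuinely different. The paper does not use weak compactness or first-order variational conditions at all. Instead, for $T=2$ it shows directly that iterating $\mathcal{K}$ from any initial $L$ produces a sequence along which the Lagrangian objective is non-decreasing (Lemma \ref{monotonicity}), by freezing all but one factor of $L$ and invoking the single-variable result Proposition \ref{optimal L} on that slot; combined with convergence of the iterates to the fixed point (Lemma \ref{lemma:convergence}), this yields global optimality of $L^*$ without any compactness argument. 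For general $T$ the same monotonicity scheme requires an auxiliary operator $\bar{\mathcal{K}}$ on $\mathcal{L}(M)^{T-1}$ built from the fully symmetrized function $\bar{S}_h$, with a component-wise update that cycles through the $T-1$ slots.

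Your route is more classical but has two soft spots you have glossed over. First, the assertion that $E_0[h(\mathbf{X}_T)\underline{L}_T]$ ``behaves well under weak limits'' is the crux and is not automatic: this is a degree-$T$ multilinear functional of $L$, and weak-$L^1$ convergence of $L_n$ does not immediately yield convergence of $\prod_t L_n(X_t)$ tested against an unbounded $h$ (Assumption \ref{boundedness} gives only exponential moments, not boundedness). You would need a tensor-product weak-convergence statement on $\mathcal{X}^T$ together with uniform integrability of $h\,\underline{L}_T^{(n)}$ on the product space, which is plausible but is substantive work you have not sketched. Second, the Euler--Lagrange step via $L_\epsilon=L+\epsilon u$ presupposes that the maximizer is strictly positive $P_0$-a.e.\ (otherwise the entropy derivative blows up and the variation is one-sided only), so you need a separate argument ruling out $\{L=0\}$ having positive measure. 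The paper's monotone-iteration approach sidesteps both issues entirely, at the cost of the $\bar{\mathcal{K}}$ machinery for $T>2$; your approach, if these gaps are properly closed, would be more direct and would avoid that construction.
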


The form in \eqref{optimal L fixed time} can be guessed from a heuristic differentiation with respect to $L$. To see this, consider further relaxation of the constraint $E_0[L]=1$ in \eqref{max dual}:
\begin{equation}
E_0[h(\mathbf{X}_T)L(X_1)L(X_2)\cdots L(X_T)]-\alpha E_0[L\log L]+\alpha\eta+\lambda E_0[L]-\lambda. \label{interim fixed time}
\end{equation}
There are $T$ factors of $L$ in the first term. A heuristic ``product rule" of differentation is to sum up the derivative with respect to each $L$ factor, keeping all other $L$'s unchanged. To do so, we condition on $X_t$ to write
$$E_0[h(\mathbf{X}_T)L(X_1)L(X_2)\cdots L(X_T)]=E_0\left[E_0\left[h(\mathbf{X}_T)\prod_{\substack{1\leq r\leq T\\r\neq t}}L(X_r)\Bigg|X_t\right]L(X_t)\right]$$
and
\begin{equation}
\frac{d}{dL(x)}E_0\left[h(\mathbf{X}_T)\prod_{t=1}^TL(X_t)\right]\ ``="\ \sum_{t=1}^TE_0\left[h(\mathbf{X}_T)\prod_{\substack{1\leq r\leq T\\r\neq t}}L(X_r)\Bigg|X_t=x\right]. \label{heuristic differentiation}
\end{equation}
So the Euler-Lagrange equation is
\begin{equation}\sum_{t=1}^TE_0\left[h(\mathbf{X}_T)\prod_{\substack{1\leq r\leq T\\r\neq t}}L(X_r)\Bigg|X_t=x\right]-\alpha\log L(x)-\alpha+\lambda=0 \label{Euler Lagrange}
\end{equation}
which gives
$$L(x)\propto\exp\left\{\sum_{t=1}^TE_0\left[h(\mathbf{X}_T)\prod_{\substack{1\leq r\leq T\\r\neq t}}L(X_r)\Bigg|X_t=x\right]\Bigg/\alpha\right\}.$$
The constraint $E_0[L]=1$ then gives the expression \eqref{optimal L fixed time}. The ``product rule" \eqref{heuristic differentiation} can be readily checked for finitely supported $X$. The following shows an instance when $T=2$:
\begin{example}
Consider two i.i.d.~random variables $X_1$ and $X_2$, and a cost function $h(X_1,X_2)$. The variables $X_1$ and $X_2$ have finite support on $1,2,\ldots,n$ under $P_0$. Denote $p(x)=P_0(X_1=x)$ for $x=1,2,\ldots,n$. The objective value in \eqref{max L} in this case is
$$E_0[h(X_1,X_2)L(X_1)L(X_2)]=\sum_{x_1=1}^n\sum_{x_2=1}^nh(x_1,x_2)p(x_1)p(x_2)L(x_1)L(x_2).$$
Now differentiate with respect to each $L(1),L(2),\ldots,L(n)$ respectively. For $i=1,\ldots,n$, we have
\begin{align*}
\frac{d}{dL(i)}E_0[h(X_1,X_2)]&=\sum_{x_1\neq i}h(x_1,i)p(x_1)p(i)L(x_1)+\sum_{x_2\neq i}h(i,x_2)p(i)p(x_2)L(x_2)+2h(i,i)p(i)^2L(i)\\
&=E_0[h(X_1,i)L(X_1)]+E_0[h(i,X_2)L(X_2)].
\end{align*}
This coincides with the product rule \eqref{heuristic differentiation} discussed above.
\end{example}

\subsection{Outline of Argument of Proposition \ref{prop:fixed point}} \label{T general}


The proof of Proposition \ref{prop:fixed point} centers around an operator $\mathcal{K}:\mathcal{L}(M)^{T-1}\to\mathcal{L}(M)^{T-1}$ as follows. First, we define a function derived from $h$ as
\begin{equation}
S_h(\mathbf{x}_T)=\sum_{\mathbf{y}\in\mathcal{S}_T}h(\mathbf{y}) \label{bar S}
\end{equation}
where $\mathcal{S}_T$ is the symmetric group of all permutations of $\mathbf{x}_T=(x_1,\ldots,x_T)$. The summation in \eqref{bar S} has $T!$ number of terms. Obviously, by construction the value of $S_h$ is invariant to any permutation of its arguments.

Denote $\mathbf{L}=(L_1,\ldots,L_{T-1})\in\mathcal{L}(M)^{T-1}$. We now define a mapping $K:\mathcal{L}(M)^{T-1}\to\mathcal{L}(M)$ given by
\begin{equation}
K(L_1,\ldots,L_{T-1})(x):=\frac{e^{E_0[S_h(X,X_1,\ldots,X_{T-1})\prod_{t=1}^{T-1}L_t(X_t)|X=x]/(\alpha(T-1)!)}}{E_0[e^{E_0[S_h(X,X_1,\ldots,X_{T-1})\prod_{t=1}^{T-1}L_t(X_t)|X]/(\alpha(T-1)!)}]} \label{component K}
\end{equation}
where $X,X_1,X_2,\ldots,X_{T-1}$ are i.i.d.~random variables with distribution $P_0$.
Then for a given $\mathbf{L}$, define
\begin{equation}
\begin{array}{ll}
\tilde{L}_1&=K(L_1,\ldots,L_{T-1})\\
\tilde{L}_2&=K(\tilde{L}_1,L_2,\ldots,L_{T-1})\\
\tilde{L}_3&=K(\tilde{L}_1,\tilde{L}_2,L_3,\ldots,L_{T-1})\\
&\vdots\\
\tilde{L}_{T-1}&=K(\tilde{L}_1,\ldots,\tilde{L}_{T-2},L_{T-1}).
\end{array} \label{relation K}
\end{equation}
Finally, the operator $\mathcal{K}$ on $\mathcal{L}(M)^{T-1}$ is defined as
\begin{equation}
\mathcal{K}(\mathbf{L})=(\tilde{L}_1,\ldots,\tilde{L}_{T-1}). \label{multivariate K}
\end{equation}

%

The following shows the main steps for the proof of Proposition \ref{prop:fixed point}.
\\

\noindent\textbf{Step 1: Contraction Mapping. }We have:
\begin{lemma}
Under Assumption \ref{boundedness}, when $\alpha$ is sufficiently large, the operator $\mathcal{K}:\mathcal{L}(M)^{T-1}\to\mathcal{L}(M)^{T-1}$ defined in \eqref{multivariate K} is well-defined, closed and a contraction, using the metric $d(\cdot,\cdot):\mathcal{L}(M)^{T-1}\times\mathcal{L}(M)^{T-1}\to\mathbb{R}_+$ defined as
$$d(\mathbf{L},\mathbf{L}')=\max_{t=1,\ldots,T}E_0[(1+\Lambda(X))|L_t(X)-L_t'(X)|]$$
where $\mathbf{L}=(L_1,\ldots,L_{T-1}),\mathbf{L}'=(L_1',\ldots,L_{T-1}')\in\mathcal{L}(M)^{T-1}$.
Hence there exists a unique fixed point $\mathbf{L}^*$ that satisfies $\mathcal{K}(\mathbf{L}^*)=\mathbf{L}^*$. Moreover, all components of $\mathbf{L}^*$ are identical. \label{lemma:multivariate fixed point}
\end{lemma}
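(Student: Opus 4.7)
The plan is to follow the three-part template of Lemma \ref{contraction K} (well-definedness, closedness, contraction), but with two new ingredients required by the multi-stage structure: (i) a uniform Lipschitz estimate for the single component operator $K$ defined in \eqref{component K}, viewed as a function of its $T-1$ arguments in $\mathcal{L}(M)$, and (ii) a Gauss-Seidel propagation step that converts this componentwise estimate into a contraction of $\bar{\mathcal{K}}$ with respect to the max-metric $d(\cdot,\cdot)$. Once Banach's fixed point theorem gives existence and uniqueness, the symmetry of $\bar{S}_h$ will immediately force all components of $\mathbf{L}^*$ to coincide.

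For well-definedness and closedness, I first observe that Assumption \ref{boundedness} and the definition \eqref{bar S} yield the majorization $|\bar{S}_h(X,X_1,\ldots,X_{T-1})| \leq (T-1)!\sum_{t=0}^{T-1}\Lambda(X_t)$ (with $X_0:=X$), so that, for any $\mathbf{L}\in\mathcal{L}(M)^{T-1}$,
$$E_0\!\bigl[\bar{S}_h(X,X_1,\ldots,X_{T-1})\textstyle\prod_{t=1}^{T-1}L_t(X_t)\,\big|\,X=x\bigr]$$
is bounded in absolute value by $(T-1)!(\Lambda(x)+(T-1)E_0[\Lambda(X)L(X)])\le (T-1)!(\Lambda(x)+(T-1)M)$ using the independence of $X_1,\ldots,X_{T-1}$ and that $L_t\in\mathcal{L}(M)$. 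Together with the exponential moment estimate \eqref{q moment}, this bounds both the numerator and denominator of $K$ away from $0$ and $\infty$ for $\beta=1/\alpha$ small enough, precisely as in the proof of Lemma \ref{contraction K}. The same computation with $\Lambda(X)$ inserted at the outer expectation, combined with $E_0[\Lambda(X)e^{\beta \Lambda(X)}]\to E_0[\Lambda(X)]$ as $\beta\to 0$, gives $E_0[\Lambda(X)K(\mathbf{L})(X)]\le M$ whenever $M>E_0[\Lambda(X)]$ and $\beta$ is small. Hence each Gauss-Seidel update $\tilde L_t$ in \eqref{relation K} stays in $\mathcal{L}(M)$, and $\bar{\mathcal{K}}$ maps $\mathcal{L}(M)^{T-1}$ into itself.

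For contraction, the first step is to prove a single-step estimate: for any $\mathbf{L},\mathbf{L}'\in\mathcal{L}(M)^{T-1}$,
$$\bigl\|K(L_1,\ldots,L_{T-1})-K(L_1',\ldots,L_{T-1}')\bigr\|_{\Lambda}\;\le\;C\beta\sum_{j=1}^{T-1}\|L_j-L_j'\|_{\Lambda}.$$
This is obtained by the same two-fold mean-value-theorem argument used in the $T=2$ case in Lemma \ref{contraction K}: one mean value theorem handles the quotient in \eqref{component K}, another handles the exponential, after which the difference is estimated by $|\bar{S}_h|\le(T-1)!\sum\Lambda$ and the independence of the $X_t$'s to split $\prod L_t-\prod L_t'$ into $T-1$ telescoping terms each of the form $\prod_{r<j}L_r\cdot(L_j-L_j')\cdot\prod_{r>j}L_r'$. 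With this single-step Lipschitz bound in hand, I propagate through the cascade \eqref{relation K} by induction on $t$: if $\|\tilde L_s-\tilde L_s'\|_\Lambda\le C_s\beta\,d(\mathbf{L},\mathbf{L}')$ for $s<t$, then
$$\|\tilde L_t-\tilde L_t'\|_\Lambda\;\le\;C\beta\Bigl(\sum_{s<t}C_s\beta+(T-t)\Bigr)d(\mathbf{L},\mathbf{L}'),$$
which for $\beta$ small enough yields $C_t\le C'(T-1)$ uniformly in $t$. Taking the maximum over $t$ gives $d(\bar{\mathcal{K}}(\mathbf{L}),\bar{\mathcal{K}}(\mathbf{L}'))\le C'(T-1)\beta\, d(\mathbf{L},\mathbf{L}')$, which is a strict contraction for all sufficiently large $\alpha$. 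Since $\mathcal{L}(M)^{T-1}$ equipped with $d$ is complete (it inherits completeness from $\mathcal{L}(M)$ under $\|\cdot\|_\Lambda$ by taking maxima), Banach's fixed point theorem delivers a unique $\mathbf{L}^*=(L_1^*,\ldots,L_{T-1}^*)$.

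Finally, for the identical-components claim, fix $\mathbf{L}^*$ and read off the Gauss-Seidel scheme \eqref{relation K} at this fixed point. The first line gives $L_1^*=\tilde L_1^*=K(L_1^*,\ldots,L_{T-1}^*)$. Plugging $\tilde L_1^*=L_1^*$ into the second line yields $L_2^*=\tilde L_2^*=K(L_1^*,L_2^*,\ldots,L_{T-1}^*)$, which is the same right-hand side as for $L_1^*$. An induction on $t$ gives $L_t^*=K(L_1^*,\ldots,L_{T-1}^*)$ for every $t$, so all components agree. The main obstacle I anticipate is the bookkeeping in the Gauss-Seidel propagation: tracking the accumulated Lipschitz constants $C_t$ and verifying that the relevant exponential-moment bounds hold uniformly for every intermediate iterate $\tilde L_s$, which requires re-invoking the closedness step at each stage of the cascade.
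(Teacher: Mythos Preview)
Your proposal is correct and follows essentially the same approach as the paper's proof: the same majorization $|\bar{S}_h|/(T-1)!\le\sum_t\Lambda(X_t)$ for well-definedness and closedness, the same two-fold mean-value-theorem argument combined with a telescoping decomposition of $\prod L_t-\prod L_t'$ for the single-step Lipschitz bound on $K$, and the same Gauss--Seidel propagation through the cascade \eqref{relation K} (the paper records it via auxiliary variables $y_t,\tilde y_t$ and a matrix $A_{ts}(\beta)\to 0$, whereas you do the equivalent induction on $C_t$). The identical-components argument is likewise the same unwinding of \eqref{relation K} at the fixed point.
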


This leads to a convergence result on the iteration driven by the mapping $K$:
\begin{corollary}
With Assumption \ref{boundedness} and sufficiently large $\alpha$, starting from any $L^{(1)},\ldots,L^{(T-1)}\in\mathcal{L}(M)$, the iteration $L^{(k)}=K(L^{(k-T+1)},\ldots,L^{(k-1)})$ for $k\geq T$, where $K:\mathcal{L}(M)^{T-1}\to\mathcal{L}(M)$ is defined in \eqref{component K}, converges to $L^*$ in $\|\cdot\|_\Lambda$-norm, where $L^*$ is the identical component of the fixed point $\mathbf{L}^*$ of $\mathcal{K}$. Moreover, $L^*=K(L^*,\ldots,L^*)$. \label{corollary:component}
\end{corollary}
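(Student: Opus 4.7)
The plan is to reduce the corollary to Lemma~\ref{lemma:multivariate fixed point} by recognizing that the scalar sliding-window iteration $L^{(k)} = K(L^{(k-T+1)},\ldots,L^{(k-1)})$ secretly produces, block by block, exactly the iterates of $\bar{\mathcal{K}}$ applied to $\mathbf{L}^{(0)} := (L^{(1)},\ldots,L^{(T-1)})$. The bridge is a permutation invariance of the map $K : \mathcal{L}(M)^{T-1} \to \mathcal{L}(M)$: for every permutation $\sigma$ of $\{1,\ldots,T-1\}$,
$$K(L_{\sigma(1)},\ldots,L_{\sigma(T-1)}) = K(L_1,\ldots,L_{T-1}).$$
I would prove this via the change of dummy variables $X_t \mapsto X_{\sigma^{-1}(t)}$ in the expectations appearing in both the numerator and denominator of \eqref{component K}. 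The joint law of $(X_1,\ldots,X_{T-1})$ is preserved by the relabeling thanks to the i.i.d.\ assumption, and $\bar{S}_h$ is invariant under every permutation of all $T$ of its arguments by construction \eqref{bar S}, so $\sigma$ migrates off the $L_t$'s and vanishes. This is the one place where the uniformly symmetric $\bar{S}_h$, rather than the merely swap-symmetric $S_h$ of \eqref{symmetrization}, is essential.

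With the symmetry of $K$ in hand, I would show by induction on $j \geq 0$ that
$$\mathbf{L}^{(j)} := \bar{\mathcal{K}}^{j}(\mathbf{L}^{(0)}) = \bigl(L^{(j(T-1)+1)},\,L^{(j(T-1)+2)},\,\ldots,\,L^{((j+1)(T-1))}\bigr).$$
The base case $j=0$ is the definition. For the inductive step I would run an inner induction on the component index $t$ inside sweep $j+1$: assuming the outer hypothesis and the inner hypothesis $\tilde{L}_s = L^{((j+1)(T-1)+s)}$ for $s < t$, the recipe \eqref{relation K} writes $\tilde{L}_t$ as $K$ applied to these updated components followed by the still-old components $L^{(j(T-1)+s)}$ for $s=t,\ldots,T-1$. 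As an unordered multiset these $T-1$ arguments coincide exactly with the $T-1$ consecutive sliding-window iterates $L^{(j(T-1)+t)},\,\ldots,\,L^{((j+1)(T-1)+t-1)}$ that the recursion uses to define $L^{((j+1)(T-1)+t)}$; the permutation symmetry of $K$ then identifies the two, closing both inductions.

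The corollary now follows from Lemma~\ref{lemma:multivariate fixed point}: since $\bar{\mathcal{K}}$ is a strict contraction on $(\mathcal{L}(M)^{T-1},d)$ with unique fixed point $\mathbf{L}^* = (L^*,\ldots,L^*)$,
$$d(\mathbf{L}^{(j)},\mathbf{L}^*) \;=\; \max_{r=1,\ldots,T-1} \|L^{(j(T-1)+r)} - L^*\|_\Lambda \;\to\; 0 \quad \text{as } j \to \infty.$$
Every integer $k \geq T$ lies in exactly one block $\{j(T-1)+1,\ldots,(j+1)(T-1)\}$ and $k \to \infty$ forces $j \to \infty$, hence $\|L^{(k)} - L^*\|_\Lambda \to 0$. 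I anticipate the main technical obstacle to be the bookkeeping in the inner induction step --- correctly tracking which sliding-window iterates play the role of the updated $\tilde{L}_s$ versus the old $L_s$ entries in the interleaved recipe \eqref{relation K} --- but once the permutation symmetry of $K$ is established, the matching is purely mechanical.
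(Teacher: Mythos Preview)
Your proposal is correct and follows the same strategy as the paper: reduce to Lemma~\ref{lemma:multivariate fixed point} by identifying the sliding-window iterates with the components of $\bar{\mathcal{K}}^j(\mathbf{L}^{(0)})$, then read off componentwise convergence from convergence in the product metric $d$. The paper's proof is a two-sentence sketch that takes this identification for granted; you have correctly supplied the missing ingredient --- the permutation invariance of $K$ inherited from the full symmetry of $\bar{S}_h$ and the i.i.d.\ structure --- and carried out the block bookkeeping explicitly.
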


\noindent\textbf{Step 2: Monotonicity of the Objective Value under Iteration of $K$. }We shall consider the objective in \eqref{max dual} multiplied by $T!$, i.e.
\begin{equation}
T!(E_0[h(\mathbf{X}_T)\underline{L}_T]-\alpha E_0[L\log L])=E_0[S_h(\mathbf{X}_T)\underline{L}_T]-\alpha T!E_0[L\log L]. \label{scaled objective}
\end{equation}
Iterations driven by the mapping $K$ possess a monotonicity property on this scaled objective:
\begin{lemma}
With Assumption \ref{boundedness} and sufficiently large $\alpha$, starting from any $L^{(1)},\ldots,L^{(T)}\in\mathcal{L}(M)$, construct the sequence $L^{(k+1)}=K(L^{(k-T+2)},\ldots,L^{(k)})$ for $k\geq T$, where $K$ is defined in \eqref{component K}. Then
\begin{equation}
E_0\left[S_h(\mathbf{X}_T)\prod_{t=1}^TL^{(k+t-1)}(X_t)\right]-\alpha(T-1)!\sum_{t=1}^TE_0[L^{(k+t-1)}\log L^{(k+t-1)}] \label{convergence multivariate}
\end{equation} is non-decreasing in $k$, for $k\geq1$. \label{lemma:multivariate monotonicity}
\end{lemma}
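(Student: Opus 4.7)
The plan is to imitate the $T=2$ argument of Lemma \ref{monotonicity}: use that each component-wise iterate $L^{(k+T)}$ is, by construction, the unique maximizer of a single-variable Lagrangian problem of the form handled by Proposition \ref{optimal L}, and then compare its value against the admissible competitor $L^{(k)}$.

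Fix $k\ge 1$ and introduce the auxiliary cost function
$$H_k(x)\;=\;\frac{1}{(T-1)!}\,E_0\!\left[\bar S_h(X,X'_1,\ldots,X'_{T-1})\prod_{j=1}^{T-1}L^{(k+j)}(X'_j)\,\bigg|\,X=x\right],$$
where $X,X'_1,\ldots,X'_{T-1}$ are i.i.d.\ with distribution $P_0$. The definition \eqref{component K} rewrites $L^{(k+T)}=K(L^{(k+1)},\ldots,L^{(k+T-1)})$ as $e^{H_k(x)/\alpha}/E_0[e^{H_k(X)/\alpha}]$, which Proposition \ref{optimal L} identifies as the unique maximizer of $E_0[H_k(X)L]-\alpha E_0[L\log L]$ over $L\in\mathcal L$ (the exponential moment hypothesis on $H_k(X)$ follows from Assumption \ref{boundedness} together with $L^{(k+j)}\in\mathcal L(M)$, as in \eqref{h moment}, while the maximizer itself sits in $\mathcal L(M)$ by Lemma \ref{lemma:multivariate fixed point}). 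Comparing its value against the competitor $L^{(k)}\in\mathcal L(M)$ yields the one-step inequality
$$E_0[H_k(X)L^{(k+T)}]-\alpha E_0[L^{(k+T)}\log L^{(k+T)}]\;\ge\;E_0[H_k(X)L^{(k)}]-\alpha E_0[L^{(k)}\log L^{(k)}].$$

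I would then multiply this inequality by $(T-1)!$ and unfold $H_k$ on both sides. Relabeling $X=X_1,\ X'_j=X_{j+1}$ turns the right-hand side into
$$E_0\!\left[\bar S_h(\mathbf X_T)L^{(k)}(X_1)\prod_{j=1}^{T-1}L^{(k+j)}(X_{j+1})\right]-\alpha(T-1)!E_0[L^{(k)}\log L^{(k)}]\;=\;E_0\!\left[\bar S_h(\mathbf X_T)\prod_{t=1}^{T}L^{(k+t-1)}(X_t)\right]-\alpha(T-1)!E_0[L^{(k)}\log L^{(k)}].$$
For the left-hand side, the analogous unfolding produces $E_0[\bar S_h(\mathbf X_T)L^{(k+T)}(X_1)\prod_{j=1}^{T-1}L^{(k+j)}(X_{j+1})]$, and I then invoke the uniform symmetry of $\bar S_h$ (invariant under all $T!$ permutations of its arguments) together with the exchangeability of $X_1,\ldots,X_T$ under $P_0$ to cyclically permute the pairing, moving $L^{(k+T)}$ out of the $X_1$-slot and into the $X_T$-slot, yielding $E_0[\bar S_h(\mathbf X_T)\prod_{t=1}^T L^{(k+t)}(X_t)]$. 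Finally, adding the common "middle" entropy terms $-\alpha(T-1)!\sum_{t=1}^{T-1}E_0[L^{(k+t)}\log L^{(k+t)}]$ to both sides of the resulting inequality recovers exactly $V_k$ on the right and $V_{k+1}$ on the left, where $V_k$ denotes the quantity in \eqref{convergence multivariate}, and hence $V_{k+1}\ge V_k$.

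The main obstacle I anticipate is purely combinatorial: verifying that the factor $(T-1)!$ in the denominator of $H_k$ is precisely what is absorbed when $\bar S_h$ is unfolded, and justifying the cyclic relabeling that shifts $L^{(k+T)}$ from the $X_1$-slot to the $X_T$-slot. Both steps reduce to writing down an explicit permutation of $\{1,\ldots,T\}$ and exploiting exchangeability, so no new analytic input beyond Proposition \ref{optimal L} and the moment control from Assumption \ref{boundedness} is required.
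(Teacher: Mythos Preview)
Your proposal is correct and follows essentially the same approach as the paper's proof: isolate the factor $L^{(k)}$ by conditioning on its slot, invoke Proposition~\ref{optimal L} (with the effective parameter $\alpha(T-1)!$) to replace it by the maximizer $L^{(k+T)}$, and then use the full symmetry of $\bar S_h$ together with exchangeability of $X_1,\ldots,X_T$ to rewrite the result as $V_{k+1}$. The only cosmetic difference is that you absorb the $(T-1)!$ into the definition of $H_k$ whereas the paper keeps it attached to $\alpha$ in the entropy term; these are equivalent, and your anticipated ``combinatorial obstacle'' is indeed a one-line relabeling.
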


\noindent\textbf{Step 3: Convergence of the Objective Value to the Optimum. }Finally, we have the convergence of \eqref{convergence multivariate} to the scaled objective \eqref{scaled objective} evaluated at any identical component of the fixed point of $\mathcal{K}$:
\begin{lemma}
With Assumption \ref{boundedness} and sufficiently large $\alpha$, starting from any $L^{(1)},\ldots,L^{(T)}\in\mathcal{L}(M)$, we have
\begin{eqnarray*}
&&E_0\left[S_h(\mathbf{X}_T)\prod_{t=1}^TL^{(k+t-1)}(X_t)\right]-\alpha(T-1)!\sum_{t=1}^TE_0[L^{(k+t-1)}\log L^{(k+t-1)}]\\
&\to&E_0\left[S_h(\mathbf{X}_T)\prod_{t=1}^TL^*(X_t)\right]-\alpha T!E_0[L^*\log L^*]
\end{eqnarray*}
where $L^{(k)}$ is defined by the same recursion in Lemma \ref{lemma:multivariate monotonicity}, and $L^*$ is any identical component of $\mathbf{L}^*\in\mathcal{L}(M)^{T-1}$, the fixed point of $\mathcal{K}$ defined in \eqref{multivariate K}. \label{lemma:multivariate convergence}
\end{lemma}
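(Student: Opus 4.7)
The plan is to follow the same three-part template used in the proof of Lemma \ref{lemma:convergence} for the $T=2$ case, generalizing each step to handle the $T$ distinct iterates $L^{(k)},\ldots,L^{(k+T-1)}$ that now appear in the objective, and leveraging Corollary \ref{corollary:component} as the replacement for the Banach fixed-point convergence step.

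First I would invoke Corollary \ref{corollary:component} to obtain $\|L^{(k)} - L^*\|_\Lambda \to 0$ as $k \to \infty$, where $L^*$ is the common component of the fixed point $\mathbf{L}^*$ of $\bar{\mathcal{K}}$. This gives me the uniform convergence in the $\Lambda$-weighted $\mathcal{L}_1$ norm that drives the rest of the argument. Note that $E_0[L^{(k)}] = 1$ for all $k$ and $E_0[\Lambda(X) L^{(k)}(X)] \le M$, so the iterates are uniformly bounded in a way that interacts well with $|\bar{S}_h(\mathbf{X}_T)| \le \sum_{t=1}^T \Lambda_t(X_t) \le \sum_{t=1}^T \Lambda(X_t)$ via Assumption \ref{boundedness}.

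Next I would handle the product term $E_0[\bar{S}_h(\mathbf{X}_T) \prod_{t=1}^T L^{(k+t-1)}(X_t)]$ by a telescoping decomposition: write
\begin{equation*}
\prod_{t=1}^T L^{(k+t-1)}(X_t) - \prod_{t=1}^T L^*(X_t) = \sum_{s=1}^T \Bigl(\prod_{t<s} L^*(X_t)\Bigr) \bigl(L^{(k+s-1)}(X_s) - L^*(X_s)\bigr) \Bigl(\prod_{t>s} L^{(k+t-1)}(X_t)\Bigr).
\end{equation*}
Applying Assumption \ref{boundedness} to bound $|\bar{S}_h|$ by $\sum_{t} \Lambda(X_t)$, using independence of the $X_t$'s to factor each summand into products of one-dimensional expectations, and using the uniform bounds $E_0[L^{(k)}] = 1$, $E_0[\Lambda(X) L^{(k)}(X)] \le M$, and $E_0[\Lambda(X) L^*(X)] \le M$, each telescoping summand can be bounded by a constant multiple of $\|L^{(k+s-1)} - L^*\|_\Lambda$. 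This parallels exactly the estimate \eqref{argument convergence} in the $T=2$ proof and yields convergence of the product term to $E_0[\bar{S}_h(\mathbf{X}_T) \prod_{t=1}^T L^*(X_t)]$.

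For the entropy terms $E_0[L^{(k+t-1)} \log L^{(k+t-1)}]$, I would exploit the fact that for $k + t - 1 \ge T$ the iterate satisfies $L^{(k+t-1)} = K(L^{(k+t-T+1)}, \ldots, L^{(k+t-2)})$, so from the explicit form \eqref{component K},
\begin{equation*}
\log L^{(k+t-1)}(x) = \frac{1}{\alpha (T-1)!} E_0\bigl[\bar{S}_h(X, X_1, \ldots, X_{T-1})\textstyle\prod_{r=1}^{T-1} L^{(k+t-T+r)}(X_r) \,\big|\, X = x\bigr] - \log Z_{k,t}
\end{equation*}
where $Z_{k,t}$ is the corresponding normalizing constant. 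Multiplying by $L^{(k+t-1)}$ and taking $E_0$, the entropy reduces to a combination of (i) a product expectation of the same form treated in the previous paragraph and (ii) a log of a normalizing constant, both of which converge to their $L^*$ analogs by the telescoping argument together with the uniform lower bound $Z_{k,t} \ge E_0[e^{-\beta \Lambda(X)}] e^{-\beta M} > 0$ from \eqref{lower bound q} (and the analogous upper bound). Summing over $t = 1, \ldots, T$ and using that each summand converges to $E_0[L^* \log L^*]$ yields the full $\alpha T!\, E_0[L^* \log L^*]$ in the limit.

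The main obstacle I expect is the bookkeeping for the entropy term: expressing $\log L^{(k+t-1)}$ via its fixed-point-style recursion introduces yet another product $\prod_r L^{(k+t-T+r)}$ of iterates with shifted indices, and the telescoping estimate must be applied inside the logarithm (via the mean-value-theorem trick from \eqref{interim2 KL}) rather than directly, while carefully tracking the uniform bounds on the normalizing constants. Once this is set up cleanly, the proof is a mechanical extension of the $T=2$ argument, and combining the two convergences completes the proof.
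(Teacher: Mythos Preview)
Your proposal is correct and follows essentially the same route as the paper's proof: both invoke Corollary \ref{corollary:component} for $\|L^{(k)}-L^*\|_\Lambda\to0$, telescope the product term against the bound on $|\bar{S}_h|$ using independence and the $\mathcal{L}(M)$ estimates, and then handle the entropy terms by substituting the explicit form \eqref{component K} and applying the mean-value-theorem argument to the log of the normalizing constant. One small correction: since $\bar{S}_h$ is a sum over all $T!$ permutations of $h$, the bound should read $|\bar{S}_h(\mathbf{X}_T)|\leq T!\sum_{t=1}^T\Lambda(X_t)$ (the paper uses $(T-1)!\sum_t\Lambda(X_t)$), not $\sum_t\Lambda_t(X_t)$; this factorial factor is harmless and gets absorbed into the generic constant $C$.
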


These lemmas will be proved in Section \ref{sec:proof fixed point}. Once they are established, Proposition \ref{prop:fixed point} follows immediately:
\begin{proofof}{Proposition \ref{prop:fixed point}}
Given any $L\in\mathcal{L}(M)$, Lemmas \ref{lemma:multivariate monotonicity} and \ref{lemma:multivariate convergence} together conclude that
$$E_0\left[S_h(\mathbf{X}_T)\prod_{t=1}^TL(X_t)\right]-\alpha(T-1)!\sum_{t=1}^TE_0[L\log L] \leq E_0\left[S_h(\mathbf{X}_T)\prod_{t=1}^TL^*(X_t)\right]-\alpha T!E_0[L^*\log L^*]$$
by defining $L^{(1)}=\cdots=L^{(T)}=L$ and using the recursion defined in the lemmas. Here $L^*$ is the identical component of the fixed point of $\mathcal{K}$. By Corollary \ref{corollary:component}, $L^*=K(L^*,\ldots,L^*)$ so $L^*$ satisfies \eqref{optimal L fixed time}. This concludes Proposition \ref{prop:fixed point}.
\end{proofof}

\subsection{Asymptotic Expansions}
The characterization of $L^*$ in Proposition \ref{prop:fixed point} can be used to obtain asymptoptic expansion of $L^*$ in terms of $\alpha^*$. The proof of Theorem \ref{thm:fixed time}, as outlined in the recipe at the beginning of this section, then follows from an elaboration of the machinery developed in Section \ref{section:basic}. 
Details are provided in Section \ref{sec:proof asymptotics}.

\section{Extension to Random Time Horizon Problems} \label{sec:extension random time}

We shall discuss here the extension to random time horizon problems under Assumption \ref{bounded stopping time}, using the result in Theorem \ref{thm:fixed time}: 

\begin{proofof}{Theorem \ref{thm:stopping time} under Assumption \ref{bounded stopping time}}
First, formulation \eqref{max stopping time} can be written in terms of likelihood ratio:
\begin{equation}
\begin{array}{ll}
\max&E_0[h(\mathbf{X}_\tau)\underline{L}_\tau]\\
\text{subject to}\ &E_0[L(X)\log L(X)]\leq\eta\\
&L\in\mathcal{L}
\end{array} \label{max stopping time L}
\end{equation}
where $\underline{L}_\tau=\prod_{t=1}^\tau L(X_t)$. 
Under Assumption \ref{bounded stopping time}, $\tau\leq T$ a.s., for some $T>0$. Hence the objective in \eqref{max stopping time L} can also be written as $E_0[h(\mathbf{X}_\tau)\underline{L}_\tau]=E_0[h(\mathbf{X}_\tau)\underline{L}_T]$ by the martingale property of $\underline{L}_t$.

This immediately falls back into the framework of Theorem \ref{thm:fixed time}, with the cost function now being $h(\mathbf{X}_\tau)$. For this particular cost function, we argue that the $g(x)$ and $G(x,y)$ in Theorem \ref{thm:fixed time} are indeed in the form stated in Theorem \ref{thm:stopping time}. To this end, we write
\begin{equation}
g(x)=\sum_{t=1}^TE_0[h(\mathbf{X}_\tau)|X_t=x]=\sum_{t=1}^TE_0[h(\mathbf{X}_\tau);\tau\geq t|X_t=x]+\sum_{t=1}^TE_0[h(\mathbf{X}_\tau);\tau<t|X_t=x]. \label{reduction stopping}
\end{equation}
Consider the second summation in \eqref{reduction stopping}. Since $h(\mathbf{X}_\tau)I(\tau<t)$ is $\mathcal{F}_{t-1}$-measurable, it is independent of $X_t$. As a result, the second summation in \eqref{reduction stopping} is constant. Similarly, we can write
\begin{align}
G(x,y)&=\sum_{t=1}^T\sum_{\substack{s=1,\ldots,T\\s\neq t}}E_0[h(\mathbf{X}_\tau)|X_t=x,X_s=y] \notag\\
&=\sum_{t=1}^T\sum_{\substack{s=1,\ldots,T\\s\neq t}}E_0[h(\mathbf{X}_\tau);\tau\geq t\wedge s|X_t=x,X_s=y]+\sum_{t=1}^T\sum_{\substack{s=1,\ldots,T\\s\neq t}}E_0[h(\mathbf{X}_\tau);\tau<t\wedge s|X_t=x,X_s=y] \label{reduction stopping 1}
\end{align}
and the second summation in \eqref{reduction stopping 1} is again a constant. It is easy to check that the first and second order coefficients in Theorem \ref{thm:fixed time} are translation invariant to $g(x)$ and $G(x,y)$ respectively, i.e.~adding a constant in $g(x)$ or $G(x,y)$ does not affect the coefficients. Therefore Theorem \ref{thm:stopping time} follows immediately.
\end{proofof}

The proof of Theorem \ref{thm:stopping time} under Assumption \ref{independence} builds on the above argument by considering a sequence of truncated random time $\tau\wedge T,\ T=1,2,\ldots$. 
We defer its details to Section \ref{sec:proof stopping time}.

\section{Bounds on Parametric Derivatives} \label{sec:others}
The coefficients in our expansions in Section \ref{sec:results} dominate any parametric derivatives in the following sense:
\begin{proposition}
Suppose $P_0$ lies in a parametric family $P^\theta$ with $\theta\in\Theta\subset\mathbb{R}$, say $P_0=P^{\theta_0}$ where $\theta_0\in\Theta^\circ$. Denote $E^\theta[\cdot]$ as the expectation under $P^\theta$. Assume that
\begin{enumerate}
\item $P^\theta$ is absolutely continuous with respect to $P^{\theta_0}$ for $\theta$ in a neighborhood of $\theta_0$.
\item $D(\theta,\theta_0):=D(P^\theta\|P^{\theta_0})\to0$ as $\theta\to\theta_0$.
\item For any $\eta$ in a neighborhood of 0 (but not equal to 0), $D(\theta,\theta_0)=\eta$ has two solutions $\theta^+(\eta)>\theta_0$ and $\theta^-(\eta)<\theta_0$; moreover,
    $$\frac{d}{d\theta}\sqrt{D(\theta,\theta_0)}\Big|_{\theta=\theta_0^+}>0$$
    and
    $$\frac{d}{d\theta}\sqrt{D(\theta,\theta_0)}\Big|_{\theta=\theta_0^-}<0.$$
\item $\frac{d}{d\theta}E^\theta[h(\mathbf{X})]\Big|_{\theta=\theta_0}$ exists.
\end{enumerate}
Then
$$\left|\frac{d}{d\theta}E^\theta[h(\mathbf{X})]\Big/\frac{d}{d\theta}\sqrt{D(\theta,\theta_0)}\Big|_{\theta=\theta_0^{\pm}}\right|\leq\sqrt{2Var_0(\zeta(X))}$$ where $\zeta$ is the function $h$, $g$ or $\tilde{g}$ in Theorems \ref{thm:basic}, \ref{thm:fixed time} and \ref{thm:stopping time} respectively, depending on the structure of $\mathbf{X}$ that is stated in each theorem under the corresponding assumptions. \label{prop:parametric dominance}
\end{proposition}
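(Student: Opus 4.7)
The strategy is a simple feasibility squeeze: the parametric curve $\{P^\theta\}$ is a one-dimensional subset of the admissible model space $\mathcal{P}_0$, so each $P^\theta$ competes in both \eqref{max} and \eqref{min}, and hence the associated parametric sensitivity must be dominated in magnitude by the nonparametric one. I read the denominator of the displayed bound as the one-sided derivative of $\sqrt{D(\theta,\theta_0)}$ (which assumption 3 is tailored for); this is the only reading under which the ratio is well-defined at $\theta_0^\pm$.

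Concretely, fix $\theta$ in a neighborhood of $\theta_0$ and set $\eta:=D(\theta,\theta_0)$. By assumption 1, $P^\theta\in\mathcal{P}_0$, and $D(P^\theta\|P_0)=\eta$, so $P^\theta$ is feasible for both \eqref{max} and \eqref{min} at budget $\eta$. Hence
$$E_{f^*_{\min}(\eta)}[h(\mathbf{X})]\leq E^\theta[h(\mathbf{X})]\leq E_{f^*_{\max}(\eta)}[h(\mathbf{X})].$$
I then apply the appropriate asymptotic expansion --- Theorem \ref{thm:basic}, \ref{thm:fixed time}, or \ref{thm:stopping time}, depending on the structure of $\mathbf{X}$ --- to both envelopes. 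Recalling from Section \ref{sec:min} that the minimization counterpart merely flips the sign of the $\sqrt{\eta}$-coefficient while preserving its magnitude, the sandwich becomes
$$-\sqrt{2Var_0(\zeta(X))}\,\sqrt{\eta}+O(\eta)\leq E^\theta[h(\mathbf{X})]-E_0[h(\mathbf{X})]\leq\sqrt{2Var_0(\zeta(X))}\,\sqrt{\eta}+O(\eta),$$
and dividing by $\sqrt{\eta}=\sqrt{D(\theta,\theta_0)}$ yields
$$\left|\frac{E^\theta[h(\mathbf{X})]-E_0[h(\mathbf{X})]}{\sqrt{D(\theta,\theta_0)}}\right|\leq\sqrt{2Var_0(\zeta(X))}+O(\sqrt{\eta}).$$

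Finally I let $\theta\to\theta_0^{\pm}$; by assumption 2, $\eta\to 0$ along either branch. The limit of the left-hand side is identified with the ratio of one-sided derivatives,
$$\lim_{\theta\to\theta_0^{\pm}}\frac{E^\theta[h(\mathbf{X})]-E_0[h(\mathbf{X})]}{\sqrt{D(\theta,\theta_0)}}=\frac{dE^\theta[h(\mathbf{X})]/d\theta\big|_{\theta_0}}{d\sqrt{D(\theta,\theta_0)}/d\theta\big|_{\theta_0^{\pm}}},$$
by L'Hopital's rule: the numerator derivative exists by assumption 4, and the denominator one-sided derivative is nonzero by assumption 3. Sending $\sqrt{\eta}$ to zero in the error term completes the bound on both sides.

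The proof is essentially a one-line feasibility observation followed by an invocation of the already-established expansions, so all the heavy lifting is upstream in Sections \ref{sec:basic} and \ref{sec:main}. The only delicate bookkeeping is (i) ensuring the expansions in Theorems \ref{thm:basic}--\ref{thm:stopping time} apply uniformly in the feasible set, which they do by construction since they bound the extremal values over \emph{all} admissible $P_f$ with $D(P_f\|P_0)\leq\eta$, and (ii) identifying the limit of the difference quotient with the ratio of one-sided derivatives, which is precisely what assumptions 3 and 4 are designed to guarantee. No substantial obstacle arises beyond this.
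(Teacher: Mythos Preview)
Your proof is correct and follows essentially the same approach as the paper: a feasibility sandwich placing $P^\theta$ between the extremal values of \eqref{max} and \eqref{min}, followed by dividing through by $\sqrt{\eta}$ and passing to the limit. The only cosmetic difference is that the paper parametrizes the curve by $\eta$ via $\theta^\pm(\eta)$ and invokes chain rule plus the implicit function theorem to identify the limiting ratio, whereas you parametrize by $\theta$ directly and invoke L'H\^opital; both are straightforward ways to justify the same limit, and your reading of the denominator as $d\sqrt{D(\theta,\theta_0)}/d\theta$ matches what the paper's proof actually uses.
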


This proposition states the natural property that the first order expansion coefficients of worst-case optimizations dominate the parametric derivative taken in the more restrictive parametric model space. The proof is merely a simple application of the first principle of differentiation:
\begin{proof}
We consider only the setting in Theorem \ref{thm:fixed time}, as the others are similar. Let $P_0=P^{\theta_0}$. Denote $E_{f^+(\eta)}[h(\mathbf{X}_T)]$ as the optimal value of \eqref{max fixed time} and $E_{f^-(\eta)}[h(\mathbf{X}_T)]$ as the optimal value of \eqref{min fixed time}, when $\eta$ is in a neighborhood of 0. Under our assumptions, $P^{\theta^\pm(\eta)}$ with $D(\theta^\pm(\eta),\theta_0)=\eta$ are feasible solutions to both programs \eqref{max fixed time} and \eqref{min fixed time}, and hence the quantity $E^{\theta^\pm(\eta)}[h(\mathbf{X})]$ satisfies $E^{\theta^\pm(\eta)}[h(\mathbf{X}_T)]\leq E_{f^+(\eta)}[h(\mathbf{X}_T)]$ and $E^{\theta^\pm(\eta)}[h(\mathbf{X}_T)]\geq E_{f^-(\eta)}[h(\mathbf{X}_T)]$. This implies
$$\frac{E_{f^-(\eta)}[h(\mathbf{X}_T)]-E_0[h(\mathbf{X}_T)]}{\sqrt{\eta}}\leq\frac{E^{\theta^\pm(\eta)}[h(\mathbf{X}_T)]-E_0[h(\mathbf{X}_T)]}{\sqrt{\eta}}\leq\frac{E_{f^+(\eta)}[h(\mathbf{X}_T)]-E_0[h(\mathbf{X}_T)]}{\sqrt{\eta}}.$$
Taking the limit as $\sqrt{\eta}\to0$, the upper and lower bounds converge to $\sqrt{2Var_0(g(X))}$ by Theorem \ref{thm:fixed time} (and discussion point 1 in Section \ref{sec:discussions}). Moreover, the quantities
$$\lim_{\sqrt{\eta}\to0}\frac{E^{\theta^\pm(\eta)}[h(\mathbf{X}_T)]-E_0[h(\mathbf{X}_T)]}{\sqrt{\eta}}=\frac{d}{d\sqrt{\eta}}E^{\theta^\pm(\eta)}[h(\mathbf{X}_T)]\Big|_{\sqrt{\eta}=0}$$
become $\frac{d}{d\theta}E^\theta[h(\mathbf{X}_T)]\Big/\frac{d}{d\theta}\sqrt{D(\theta,\theta_0)}\Big|_{\theta=\theta_0^+}$ and $\frac{d}{d\theta}E^\theta[h(\mathbf{X}_T)]\Big/\frac{d}{d\theta}\sqrt{D(\theta,\theta_0)}\Big|_{\theta=\theta_0^-}$ respectively, by using chain rule and implicit function theorem. This concludes the proposition.
\end{proof}

\section{Numerical Examples} \label{sec:numerics}
We demonstrate some numerics of our results, in particular Theorem \ref{thm:fixed time}, using an example of multi-server queue. Consider a first-come-first-serve Markovian queue with $s$ number of servers. Customers arrive according to a Poisson process with rate $0.7s$ and enact i.i.d.~exponential service times with rate 1. Whenever the service capacity is full, newly arriving customers have to wait. We assume the system is initially empty. 
Our focus is to assess the effect if service times deviate from the exponential assumption. More concretely, let us consider our performance measure as the tail probability of the waiting time for the 100-th customer larger than a threshold 1.

To quantify the sensitivity of the exponential assumption for the service times, we compute the first order coefficient $\sqrt{2Var_0(g(X))}$ in Theorem \ref{thm:fixed time}, where $P_0$ is $Exp(1)$ and $g(\cdot)$ is computed by sequentially conditioning on the service time of customers 1 through 100, as defined in \eqref{g definition}. We tabulate, for $s=1,\ldots,5$, the point and interval estimates of the baseline performance measures and the first order coefficients $\sqrt{2Var_0(g(X))}$ in Table \ref{mms table}. Moreover, for each $s$, we calculate the ratio between the first order coefficient and the baseline performance measure as an indicator of the relative impact of model misspecification:
$$\text{Relative model misspecification impact}:=\frac{\text{Magnitude of first order coefficient}}{\text{Performance measure}}$$

\begin{table}[ht]
\centering
\begin{tabular}{c|cc|cc|c}
&\multicolumn{2}{c|}{Baseline performance measure}&\multicolumn{2}{c|}{First order coefficient}&Relative impact\\
Number of servers & Mean & $95\%$ C.I. & Mean & $95\%$ C.I.\\
\hline
1 & $0.519$ & $(0.518,0.520)$ & $1.685$ & $(1.566,1.805)$ & $3.248$\\
2 & $0.316$ & $(0.315,0.316)$& $1.689$ & $(1.556,1.822)$ &$5.353$ \\
3 & $0.200$ & $(0.199,0.201)$ & $1.446$ & $(1.318,1.573)$ & $7.239$ \\
4 & $0.129$ & $(0.128,0.129)$ & $1.217$ & $(1.079,1.355)$ & $9.460$ \\
5 & $0.084$ & $(0.083,0.084)$ & $0.957$ & $(0.856,1.058)$ & $11.462$
\end{tabular}
\caption{Simulation results for the performance measures and the first order coefficients in Theorem \ref{thm:fixed time} for the tail probability of waiting time of the 100-th customer in $M/M/s$ systems with different server capacities}
\label{mms table}
\end{table}

Table \ref{mms table} shows that the tail probability of the waiting time for the 100-th customer decreases from $0.52$ to $0.08$ as the number of servers increases from 1 to 5. The first order coefficient in Theorem \ref{thm:fixed time} also decreases in general from $1.69$ when $s=1$ to $0.96$ when $s=5$. The relative effect of model misspecification, on the other hand, increases from $3.25$ to $11.46$ as $s$ increases from 1 to 5. 

Figure \ref{mms figure} further depicts the first order approximations for the worst-case deviations $E_0[h(\mathbf{X}_T)]\pm\sqrt{2Var_0(g(X))\eta}$ for different levels of $\eta$ that represents the KL divergence. The solid line in the figure plots the baseline tail probability computed from using 1,000,000 samples for each $s=1,\ldots,5$. The dashed lines then show the approximate worst-case upper and lower bounds as $\eta$ increases. To get a sense on the magnitude of $\eta$, $\eta=0.005$ is equivalent to around $10\%$ discrepancy in service rate if the model is \emph{known} to lie in the family of exponential distribution; this can be seen by expressing the KL divergence in terms of service rate to see that roughly $KL\ divergence\approx(\%\ discrepancy\ in\ service\ rate)^2/2$ for small discrepancy. In fact, a service rate of $1.1$ corresponds to $\eta=0.0044$.

\begin{figure}[ht]
\centering
\includegraphics[scale=.23]{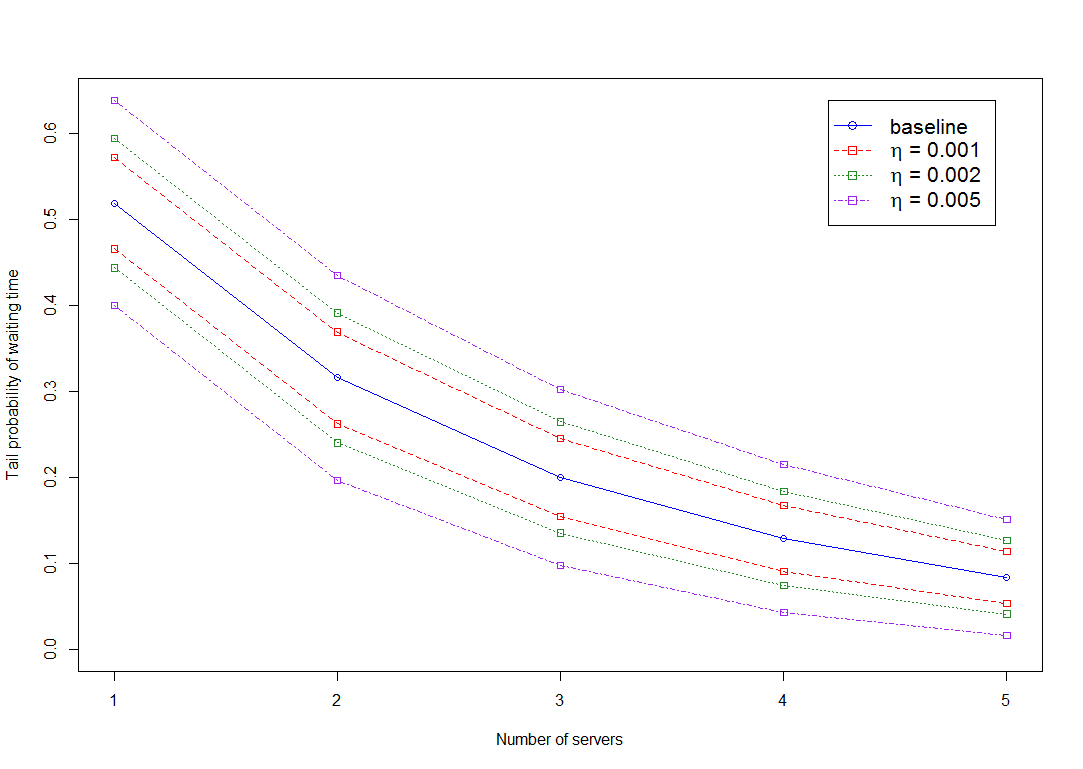}
\caption{Upper and lower first order approximations of the worst-case performance measures under different levels of input model discrepancies in terms of KL divergence for $M/M/1$ baseline system}
\label{mms figure}
\end{figure}

According to Proposition \ref{prop:fixed point}, the worst-case change of measure that gives rise to the values of the first order coefficients in Table \ref{mms table} satisfies $L(x)\propto e^{g^L(x)/\alpha^*}$, with $\alpha^*$ being the Lagrange multiplier, when $\eta$ is small. It is not possible to compute this change of measure exactly. What we can do, however, is to test our bounds from Theorem \ref{thm:fixed time} against some parametric models. Consider for instance $s=1$. The solid curves in Figure \ref{fig:comparison} plot the upper and lower bounds using only the first order approximations in Theorem \ref{thm:fixed time} (the surrounding dashed curves are the $95\%$ confidence bands for the bounds, and the dashed horizontal line is the baseline performance measure). For comparison, we simulate the performance measures using six different sequences of parametric models: the first two are kept as exponential distribution, with increasing and decreasing service rate starting from 1; the next two sequences are gamma distributions, one with the shape parameter varying from 1 and rate parameter kept at 1, whereas the other with rate parameter varying too so that the mean of the distribution is kept at 1; the last two sequences are mixtures of exponentials, one having two mixture components with rate 1 and 2 respectively and weight of the first component decreases from 1, whereas the other one having three components with the weights varying in a way that keeps the mean at 1.

As we can see, the first order bounds in Figure \ref{fig:comparison} appear to contain all the performance measures for $\eta$ up to $0.005$. It is expected that a second order correction would further improve the accuracy of the bounds. One side observation is that the sequences with the service times kept at mean equaling 1 are much closer to the baseline than the others.


\begin{figure}[ht]
\centering
\includegraphics[scale=.3]{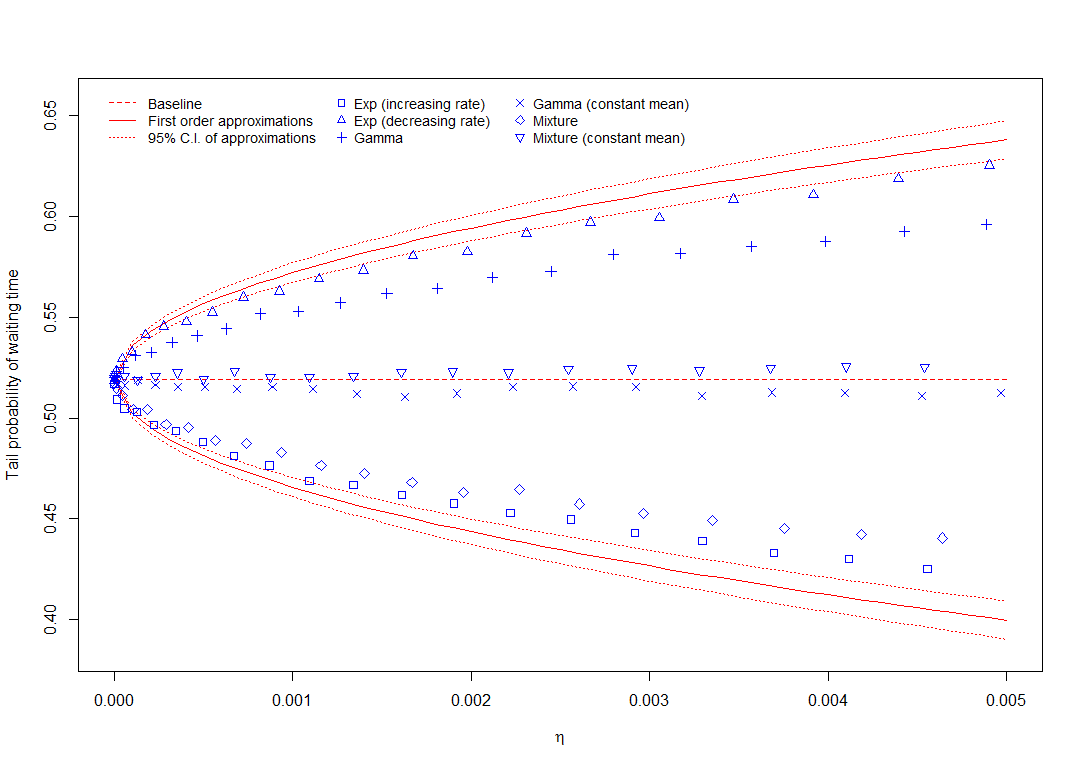}
\caption{Comparison of the first order approximations of the worst-case performance measures against parametric models}
\label{fig:comparison}
\end{figure}

The same methodology as above can be easily adapted to test other types of performance measures and models. For example, Table \ref{ggs table} and Figure \ref{ggs figure} carry out the same assessment scheme for the service time of a non-Markovian $G/G/s$ queue with gamma arrivals and uniform service times. Here we consider a deviation from the uniform distribution of the service time. In this scenario, we see from Table \ref{ggs table} that both the performance measures themselves and the magnitudes of first order coefficients are smaller than those in the $M/M/s$ case. Nonetheless the relative impacts are relatively similar.

In real applications, the magnitude of $\eta$ is chosen to represent the statistical uncertainty of the input model. Section 4.2 in \cite{gx12b} for instance provides some discussion on the choice based on past data. There are also studies on nonparametric estimation of KL divergence; see, for example, \cite{beirlant1997nonparametric} for a review of older works, \cite{paninski2003estimation}, and more recently \cite{liu2012exponential} and \cite{honorio2013two}.

\begin{figure}[ht]
\centering
\includegraphics[scale=.23]{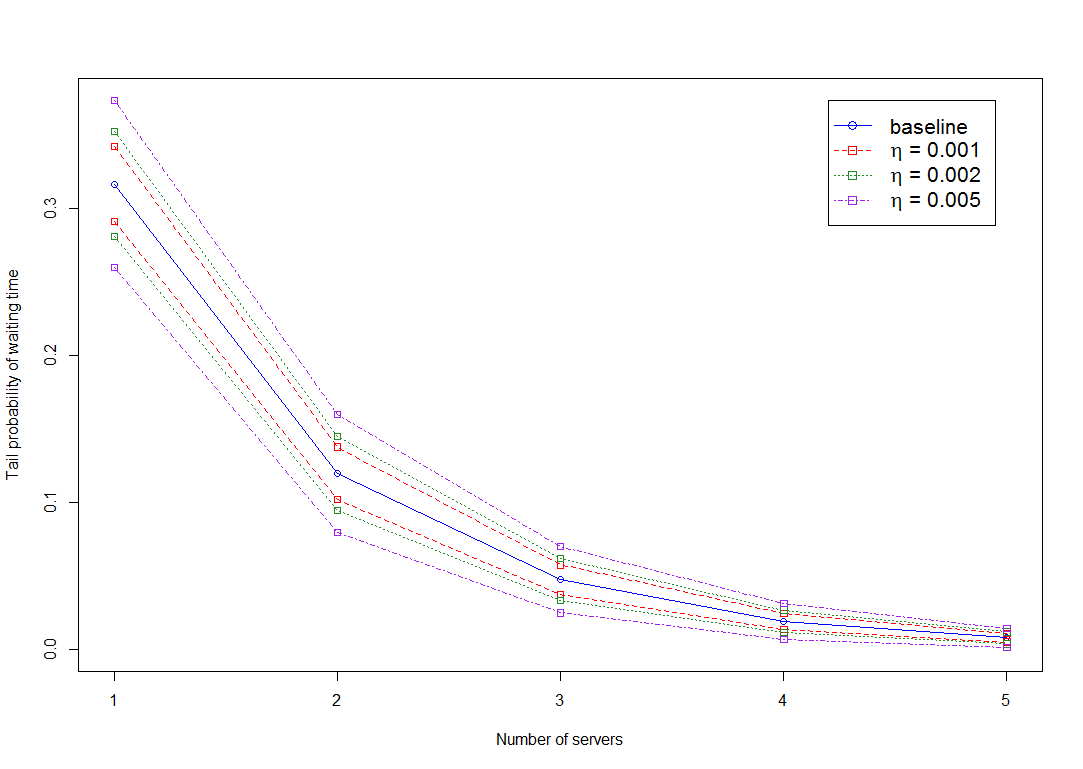}
\caption{Upper and lower first order approximations of the worst-case performance measures under different levels of input model discrepancies in terms of KL divergence for $G/G/1$ baseline system}
\label{ggs figure}
\end{figure}

\begin{table}[ht]
\centering
\begin{tabular}{c|cc|cc|c}
&\multicolumn{2}{c|}{Baseline performance measure}&\multicolumn{2}{c|}{First order coefficient}&Relative impact\\
Number of servers & Mean & C.I. & Mean & C.I.\\
\hline
$1$&$0.316$&$(0.315,0.317)$&$0.802$&$(0.737,0.867)$&$2.535$\\
$2$&$0.119$&$(0.119,0.120)$&$0.567$&$(0.531,0.602)$&$4.741$\\
$3$&$0.047$&$(0.047,0.048)$&$0.320$&$(0.298,0.341)$&$6.770$\\
$4$&$0.019$&$(0.019,0.019)$&$0.173$&$(0.160,0.185)$&$9.146$\\
$5$&$0.008$&$(0.008,0.008)$&$0.091$&$(0.079,0.104)$&$11.879$
\end{tabular}
\caption{Simulation results for the performance measures and the first order coefficients in Theorem \ref{thm:fixed time} for the tail probability of waiting time of the 100-th customer in $G/G/s$ systems with different server capacities}
\label{ggs table}
\end{table}

Finally, we explain in more detail our estimation procedure for $\sqrt{2Var_0(g(X))}$. Note first that our performance measure of interest depends on both the interarrival and service times, but the interarrival distribution is assumed known and so the cost function can be regarded as $E_0[h(\mathbf{X}_T,\mathbf{Y}_T)|\mathbf{X}_T]$, where $\mathbf{X}_T$ denotes the sequence of service times and $\mathbf{Y}_T$ as the interarrival times (see discussion point 4 in Section \ref{sec:discussions}). Second, note also that Assumption \ref{boundedness} is easily satisfied since $h$ as the indicator function is bounded. Moreover, Assumption \ref{not sup} is trivially verified by our computation demonstrating that $g(X)$ is not a constant. Hence the assumptions in Theorem \ref{thm:fixed time} are valid.

Now, it is easy to see that
$$g(x)=E_0\left[\sum_{t=1}^Th(\mathbf X_T^{(t)},\mathbf Y_T^{(t)})\Bigg|X=x\right]$$
where $\mathbf X_T^{(t)}=(X_1^{(t)},\ldots,X_{t-1}^{(t)},X,X_{t+1}^{(t)},\ldots,X_T^{(t)})$ and $\mathbf Y_T^{(t)}=(Y_1^{(t)},\ldots,Y_T^{(t)})$, with $X_s^{(t)}$ and $Y_s^{(t)}$ being i.i.d.~copies from the interarrival time and service time distributions respectively. Therefore $Var_0(g(X))$ is in the form of the variance of a conditional expectation, for which we can adopt an unbiased estimator from \cite{Sun2011}. This estimator takes the following form.
For convenience, denote $H:=\sum_{t=1}^Th(\mathbf X_T^{(t)},\mathbf Y_T^{(t)})$. To compute $Var_0(E[H|X])$, we carry out a nested simulation by first simulating $X_k,k=1,\ldots,K$, and then given each $X_k$, simulating $H_{kj},j=1,\ldots,n$. Then an unbiased estimator is
\begin{equation}
\widehat{\sigma_M^2}=\frac{1}{K-1}\sum_{k=1}^K(\bar{H}_k-\bar{\bar{H}})^2-\frac{1}{n}\widehat{{\sigma}_\epsilon^2} \label{sigma M}
\end{equation}
where
$$\widehat{{\sigma}_\epsilon^2}=\frac{1}{K(n-1)}\sum_{k=1}^K\sum_{j=1}^n(H_{kj}-\bar{H}_k)^2,\ \ \bar{H}_k=\frac{1}{n}\sum_{j=1}^nH_{kj}\text{\ \ and\ \ }\bar{\bar{H}}=\frac{1}{K}\sum_{k=1}^K\bar{H}_k.$$


To obtain a consistent point estimate and confidence interval for $\sqrt{2Var_0(g(X))}$, we use the delta method (see, for example, \S III in \cite{ag}). The overall sampling strategy is as follows:
\begin{enumerate}
\item Repeat the following $N$ times:
\begin{enumerate}
\item Simulate $K$ samples of $X$, say $X_k=x_k,k=1,\ldots,K$.
\item For each realized $x_k$, simulate $n$ samples of $H$ given $X=x_k$.
\item Calculate $\widehat{\sigma_M^2}$ using \eqref{sigma M}. 
    \end{enumerate}
\item The above procedure generates $N$ estimators $\widehat{\sigma_M^2}$. Call them $Z_l,l=1,\ldots,N$. The final point estimator is $\sqrt{2\bar{Z}}$, where $\bar{Z}=(1/N)\sum_{l=1}^NZ_l$, and the $1-\alpha$ confidence interval is $\sqrt{2}\times(\sqrt{\bar{Z}}\pm(\sigma/(2\sqrt{\bar{Z}}))t_{1-\alpha/2}/\sqrt{N})$ where $\sigma^2=1/(N-1)\sum_{n=1}^N(Z_n-\bar{Z})^2$ and $t_{1-\alpha/2}$ is the $1-\alpha/2$ percentile of the $t$-distribution with $N-1$ degree of freedom.
\end{enumerate}

This gives a consistent point estimate for $\sqrt{2Var_0(g(X))}$ and an asymptotically valid confidence interval. In our implementation we choose $K=100$, $n=50$ and $N=20$.

\section{Proofs} \label{sec:proofs}
\begin{proofof}{Proposition \ref{optimal L}}
We guess the solution \eqref{basic solution} by applying Euler-Lagrange equation and informally differentiate the integrand with respect to $L$. We will then verify rigorously that this candidate solution is indeed optimal.

Relaxing the constraint $E_0[L]=1$ in \eqref{max dual basic inner}, the objective becomes
$$E_0[h(X)L-\alpha L\log L+\lambda L-\lambda]$$
where $\lambda\in\mathbb{R}$ is the Lagrange multiplier. Treating $E_0[\cdot]$ as an integral, Euler-Lagrange equation implies that the derivative with respect to $L$ is
$$h(X)-\alpha\log L-\alpha+\lambda=0$$
which gives
$$\log L=\frac{h(X)}{\alpha}+\frac{\lambda-\alpha}{\alpha}$$
or that $L=\lambda'e^{h(X)/\alpha}$ for some $\lambda'>0$.
With the constraint that $E_0[L]=1$, a candidate solution is
\begin{equation}
L^*=\frac{e^{h(X)/\alpha}}{E_0[e^{h(X)/\alpha}]}. \label{L}
\end{equation}

To verify \eqref{L} formally, the following convexity argument will suffice. First, note that the objective value of \eqref{max dual basic inner} evaluated at $L^*$ given by \eqref{L} is
\begin{align}
E_0[h(X)L^*-\alpha L^*\log L^*]&=E_0\left[h(X)L^*-\alpha L^*\left(\frac{h(X)}{\alpha}-\log E_0[e^{h(X)/\alpha}]\right)\right] \notag\\
&=\alpha\log E_0[e^{h(X)/\alpha}]. \label{interim caculation}
\end{align}
Our goal is to show that
$$\alpha\log E_0[e^{h(X)/\alpha}]\geq E_0[h(X)L-\alpha L\log L]$$
for all $L\in\mathcal{L}$. Rearranging terms, this means we need
\begin{equation}
E_0[e^{h(X)/\alpha}]\geq e^{E_0[h(X)L-\alpha L\log L]/\alpha}. \label{target}
\end{equation}
To prove \eqref{target}, observe that, for any likelihood ratio $L$,
$$E_0[e^{h(X)/\alpha}]=E_0[LL^{-1}e^{h(X)/\alpha}]=E_0[Le^{h(X)/\alpha-\log L}]\geq e^{E_0[h(X)L/\alpha-L\log L]}$$
by using the convexity of the function $e^\cdot$ and Jensen's inequality over the expectation $E_0[L\ \cdot]$ in the last inequality. Note that equality holds if and only if $h(X)/\alpha-\log L$ is degenerate, i.e.~ $h(X)/\alpha-\log L=constant$, which reduces to $L^*$. Hence $L^*$ is the unique optimal solution for \eqref{max dual basic inner}.

In conclusion, when $1/\alpha\in\mathcal{D}^+:=\{\theta\in\mathbb{R}^+\setminus\{0\}:\psi(\theta)<\infty\}$ where $\psi(\theta)=\log E_0[e^{\theta h(X)}]$, the optimal solution of \eqref{max dual basic inner} is given by \eqref{basic solution}, with the optimal value $\alpha\log E_0[e^{h(X)/\alpha}]$.
\end{proofof}

\begin{proofof}{Lemma \ref{modification}}
For any $\eta\leq N$, we want to show that $E_0[L\log L]\leq\eta$ and $L\in\mathcal{L}$ together imply $L\in\mathcal{L}(M)$ for some large $M>0$. Note that $\Lambda(X)$ has exponential moment, since Holder's inequality implies
\begin{equation}
E_0[e^{\theta \Lambda(X)}]=E_0[e^{\theta\sum_{t=1}^T\Lambda_t(X)}]\leq\prod_{t=1}^T(E_0[e^{T\theta \Lambda_t(X)}])^{1/T}<\infty \label{q moment}
\end{equation}
when $\theta$ is small enough. Hence, for any $L\in\mathcal{L}$ that satisfies $E_0[L\log L]\leq\eta$, we have
$$E_0[e^{\theta \Lambda(X)}]=E_0[LL^{-1}e^{\theta \Lambda(X)}]=E_0[Le^{\theta \Lambda(X)-\log L}]<\infty$$
for small enough $\theta$, by \eqref{q moment}. Jensen's inequality implies that
$$e^{\theta E_0[\Lambda(X)L]-E_0[L\log L]}\leq E_0[Le^{\theta \Lambda(X)-\log L}]=E_0[e^{\theta \Lambda(X)}]<\infty.$$
Since $E_0[L\log L]\leq\eta\leq N$, we have $E_0[\Lambda(X)L]\leq M$ for some constant $M>0$. So $L\in\mathcal{L}(M)$. This concludes the lemma.
\end{proofof}

\subsection{Proofs in Section \ref{T general}}\label{sec:proof fixed point}
\begin{proofof}{Lemma \ref{lemma:multivariate fixed point}}
We prove the statement point-by-point regarding the operator $\mathcal{K}$. For convenience, denote $S_h(X,\mathbf{X}_{T-1})=S_h(X,X_1,X_2,\ldots,X_{T-1})$, where $S_h$ is defined in \eqref{bar S}, and $\underline{L}_{T-1}=\prod_{t=1}^{T-1}L_t(X_t)$ and $\underline{L}_{T-1}'=\prod_{t=1}^{T-1}L_t'(X_t)$. $X,X_1,\ldots,X_{T-1}$ are i.i.d.~random variables with distribution $P_0$. Also, denote $\beta=1/\alpha>0$, so $\beta\to0$ is equivalent to $\alpha\to\infty$. In this proof we let $C>0$ be a constant that can be different every time it shows up.
\\

\noindent\underline{Well-definedness and closedness}: Recall the definition of $K$ in \eqref{component K}, which can be written as
$$K(\mathbf{L})(x)=\frac{e^{\beta E_0[S_h(X,\mathbf{X}_{T-1})\underline{L}_{T-1}|X=x]/(T-1)!}}{E_0[e^{\beta E_0[S_h(X,\mathbf{X}_{T-1})\underline{L}_{T-1}|X]/(T-1)!}]}$$
for any $\mathbf{L}=(L_1,L_2,\ldots,L_{T-1})\in\mathcal{L}(M)^{T-1}$. We shall show that, for any $\mathbf{L}\in\mathcal{L}(M)^{T-1}$, we have $0<E_0[e^{\beta E_0[S_h(X,\mathbf{X}_{T-1})\underline{L}_{T-1}|X]/(T-1)!}]<\infty$ and that $K(\mathbf{L})\in\mathcal{L}(M)$. This will imply that, starting from any $L_1,L_2,\ldots,L_{T-1}\in\mathcal{L}(M)$, we get a well-defined operator $K$ and that  $\tilde{L}_1,\tilde{L}_2,\ldots,\tilde{L}_{T-1}$ defined in \eqref{relation K} all remain in $\mathcal{L}(M)$. We then conclude that $\mathcal K$ is both well-defined and closed in $\mathcal{L}(M)^{T-1}$ by the definition in \eqref{multivariate K}.

Now suppose $L_1,L_2,\ldots,L_{T-1}\in\mathcal{L}(M)$. Since $S_h(X,\mathbf X_{T-1})\leq(T-1)!\left(\Lambda(X)+\sum_{t=1}^{T-1}\Lambda(X_t)\right)$ by definition, we have
\begin{eqnarray}
E_0[e^{\beta E_0[S_h(X,\mathbf{X}_{T-1})\underline{L}_{T-1}|X]/(T-1)!}]&\leq&E_0[e^{\beta(\Lambda(X)+\sum_{t=1}^{T-1}E_0[\Lambda(X_t)L_t(X_t)])}] \notag\\
&=&E_0[e^{\beta \Lambda(X)}]e^{\beta\sum_{t=1}^{T-1}E_0[\Lambda(X)L_t(X)]} \notag\\
&\leq&E_0[e^{\beta \Lambda(X)}]e^{\beta(T-1)M} \notag\\
&<&\infty. \label{finite exponential moment gL}
\end{eqnarray}
This also implies that $e^{\beta E_0[S_h(X,\mathbf{X}_{T-1})\underline{L}_{T-1}|X]/(T-1)!}<\infty$ a.s.. Similarly,
\begin{equation}
E_0[e^{\beta E_0[S_h(X,\mathbf{X}_{T-1})\underline{L}_{T-1}|X]/(T-1)!}]\geq E_0[e^{-\beta \Lambda(X)}]e^{-\beta(T-1)M}>0. \label{lower bound q multivariate}
\end{equation}
Hence $K$ is well-defined. To show closedness, consider
\begin{align}
E_0[\Lambda(X)K(\mathbf{L}_{T-1})(X)]&=E_0\left[\Lambda(X)\frac{e^{\beta E_0[S_h(X,\mathbf{X}_{T-1})\underline{L}_{T-1}|X]/(T-1)!}}{E_0[e^{\beta E_0[S_h(X,\mathbf{X}_{T-1})\underline{L}_{T-1}|X]/(T-1)!}]}\right] \notag\\
&\leq\frac{E_0[\Lambda(X)e^{\beta \Lambda(X)}]e^{2\beta(T-1)M}}{E_0[e^{-\beta \Lambda(X)}]}. \label{closedness interim}
\end{align}
Since $E_0[\Lambda(X)e^{\beta \Lambda(X)}]\to E_0[\Lambda(X)]$ and $E_0[e^{-\beta \Lambda(X)}]\to1$ as $\beta\to0$, \eqref{closedness interim} is bounded by $M$ for small enough $\beta$, if we choose $M>E_0[\Lambda(X)]$. Hence $K$ is closed in $\mathcal{L}(M)$.

By recursing using \eqref{relation K}, we get that $\mathcal{K}$ is well-defined, and that for any $\mathbf{L}=(L_1,\ldots,L_{T-1})\in\mathcal{L}(M)^{T-1}$, we have $\max_{t=1,\ldots,T-1}E_0[\Lambda(X)\tilde{L}_t(X)]\leq M$, and so $\mathcal{K}$ is closed in $\mathcal{L}(M)^{T-1}$.
\\

\noindent\underline{Contraction}:
Consider, for any $\mathbf{L}=(L_1,\ldots,L_{T-1}),\mathbf{L}'=(L_1',\ldots,L_{T-1}')\in\mathcal{L}(M)^{T-1}$,
\begin{eqnarray}
&&E_0[(1+\Lambda(X))|K(\mathbf{L})(X)-K(\mathbf{L}')(X)|] \notag\\
&=&E_0\left[(1+\Lambda(X))\left|\frac{e^{\beta E_0[S_h(X,\mathbf{X}_{T-1})\underline{L}_{T-1}|X]/(T-1)!}}{E_0[e^{\beta E_0[S_h(X,\mathbf{X}_{T-1})\underline{L}_{T-1}|X]/(T-1)!}]}-\frac{e^{\beta E_0[S_h(X,\mathbf{X}_{T-1})\underline{L}_{T-1}'|X]/(T-1)!}}{E_0[e^{\beta E_0[S_h(X,\mathbf{X}_{T-1})\underline{L}_{T-1}'|X]/(T-1)!}]}\right|\right] \notag\\
&=&E_0\Bigg[(1+\Lambda(X))\Bigg|\frac{1}{\xi_2}(e^{\beta E_0[S_h(X,\mathbf{X}_{T-1})\underline{L}_{T-1}|X]/(T-1)!}-e^{\beta E_0[S_h(X,\mathbf{X}_{T-1})\underline{L}_{T-1}'|X]/(T-1)!}){} \notag\\
&&{}-\frac{\xi_1}{\xi_2^2}(E_0[e^{\beta E_0[S_h(X,\mathbf{X}_{T-1})\underline{L}_{T-1}|X]/(T-1)!}]-E_0[e^{\beta E_0[S_h(X,\mathbf{X}_{T-1})\underline{L}_{T-1}'|X]/(T-1)!}])\Bigg|\Bigg] \label{interim2}
\end{eqnarray}
by using mean value theorem, where $(\xi_1,\xi_2)$ lies in the line segment between $(e^{\beta E_0[S_h(X,\mathbf{X}_{T-1})\underline{L}_{T-1}|X]/(T-1)!},$ $E_0[e^{\beta E_0[S_h(X,\mathbf{X}_{T-1})\underline{L}_{T-1}|X]/(T-1)!}])$ and $(e^{\beta E_0[S_h(X,\mathbf{X}_{T-1})\underline{L}_{T-1}'|X]/(T-1)!},E_0[e^{\beta E_0[S_h(X,\mathbf{X}_{T-1})\underline{L}_{T-1}'|X]/(T-1)!}])$. By \eqref{lower bound q multivariate}, we have $\xi_2>1-\epsilon$ for some small $\epsilon>0$, when $\beta$ is small enough. Moreover, $\xi_1\leq e^{\beta(\Lambda(X)+(T-1)M)}$. Hence, \eqref{interim2} is less than or equal to
\begin{eqnarray}
&&E_0\Bigg[(1+\Lambda(X))\Bigg(\sup\left|\frac{1}{\xi_2}\right|\left|e^{\beta E_0[S_h(X,\mathbf{X}_{T-1})\underline{L}_{T-1}|X]/(T-1)!}-e^{\beta E_0[S_h(X,\mathbf{X}_{T-1})\underline{L}_{T-1}'|X]/(T-1)!}\right|{} \notag\\
&&{}+\sup\left|\frac{\xi_1}{\xi_2^2}\right|\left|E_0[e^{\beta E_0[S_h(X,\mathbf{X}_{T-1})\underline{L}_{T-1}|X]/(T-1)!}]-E_0[e^{\beta E_0[S_h(X,\mathbf{X}_{T-1})\underline{L}_{T-1}'|X]/(T-1)!}]\right|\Bigg)\Bigg] \notag\\
&\leq&\frac{1}{1-\epsilon}E_0\left[(1+\Lambda(X))\left|e^{\beta E_0[S_h(X,\mathbf{X}_{T-1})\underline{L}_{T-1}|X]/(T-1)!}-e^{\beta E_0[S_h(X,\mathbf{X}_{T-1})\underline{L}_{T-1}'|X]/(T-1)!}\right|\right]{} \notag\\
&&{}+\frac{e^{\beta(T-1)M}}{(1-\epsilon)^2}E_0\Big[(1+\Lambda(X))e^{\beta \Lambda(X)}\Big|E_0[e^{\beta E_0[S_h(X,\mathbf{X}_{T-1})\underline{L}_{T-1}|X]/(T-1)!}]{} \notag\\
&&{}-E_0[e^{\beta E_0[S_h(X,\mathbf{X}_{T-1})\underline{L}_{T-1}'|X]/(T-1)!}]\Big|\Big] \notag\\
&\leq&CE_0\left[(1+\Lambda(X))(e^{\beta \Lambda(X)}+1)\left|e^{\beta E_0[S_h(X,\mathbf{X}_{T-1})\underline{L}_{T-1}|X]/(T-1)!}-e^{\beta E_0[S_h(X,\mathbf{X}_{T-1})\underline{L}_{T-1}'|X]/(T-1)!}\right|\right] \notag\\
&\leq&C\beta E_0\left[(1+\Lambda(X))\frac{e^{2\beta \Lambda(X)}}{(T-1)!}|E_0[S_h(X,\mathbf{X}_{T-1})\underline{L}_{T-1}|X]-E_0[S_h(X,\mathbf{X}_{T-1})\underline{L}_{T-1}'|X]|\right] \notag\\
&&\text{\ \ by mean value theorem again} \notag\\
&\leq&C\beta E_0\left[(1+\Lambda(X))\frac{e^{2\beta \Lambda(X)}}{(T-1)!}|S_h(X,\mathbf{X}_{T-1})||\underline{L}_{T-1}-\underline{L}_{T-1}'|\right] \notag\\
&\leq&C\beta E_0\left[(1+\Lambda(X))e^{2\beta \Lambda(X)}\left(\Lambda(X)+\sum_{t=1}^{T-1}\Lambda(X_t)\right)|\underline{L}_{T-1}-\underline{L}_{T-1}'|\right] \notag\\
&=&C\beta\Bigg(E_0[(1+\Lambda(X))^2e^{2\beta \Lambda(X)}]E_0|\underline{L}_{T-1}-\underline{L}_{T-1}'|+E_0[(1+\Lambda(X))e^{2\beta \Lambda(X)}]{} \notag\\
&&{}E_0\left[\sum_{t=1}^{T-1}\Lambda(X_t)|\underline{L}_{T-1}-\underline{L}_{T-1}'|\right]\Bigg) \label{contraction interim}
\end{eqnarray}
when $\beta$ is small enough (or $\alpha$ is large enough). Now note that
$$|\underline{L}_{T-1}-\underline{L}_{T-1}'|\leq\sum_{s=1}^{T-1}\dot{\underline{L}}_{T-1}^s|L_s(X_s)-L_s'(X_s)|$$
where each $\dot{\underline{L}}_{T-1}^s$ is a product of either one of $L(X_r)$ or $L'(X_r)$ for $r=1,\ldots,T-1,r\neq s$. Hence \eqref{contraction interim} is less than or equal to
\begin{eqnarray}
&&C\beta\Bigg(E_0[(1+\Lambda(X))^2e^{2\beta \Lambda(X)}]\sum_{s=1}^{T-1}E_0|L_s(X)-L_s'(X)|+E_0[(1+\Lambda(X))e^{2\beta \Lambda(X)}]{} \notag\\
&&{}\sum_{t=1}^{T-1}\left(E_0[\Lambda(X)|L_t(X)-L_t'(X)|]+E_0[\Lambda(X)L(X)]\sum_{\substack{s=1,\ldots,T-1\\s\neq t}}E_0|L_s(X)-L_s'(X)|\right)\Bigg). \label{interim L1}
\end{eqnarray}
Now for convenience denote $y_t=E_0[(1+\Lambda(X))|L_t(X)-L_t'(X)|]$ and $\tilde{y}_t=E_0[(1+\Lambda(X))|\tilde{L}_t(X)-\tilde{L}_t'(X)|]$, where $\tilde{L}_t$ is defined in \eqref{relation K}. Also denote $\mathbf{y}=(y_1,\ldots,y_{T-1})'$ and $\tilde{\mathbf{y}}=(\tilde{y}_1,\ldots,\tilde{y}_{T-1})'$, where $'$ denotes transpose. Then \eqref{interim L1} gives $\tilde{y}_1\leq a\mathbf{1}'\mathbf{y}$ for some $a:=a(\beta)=O(\beta)$ as $\beta\to0$, and $\mathbf{1}$ denotes the $(T-1)$-dimensional vector of constant 1. Then, by iterating the same argument as above, \eqref{relation K} implies that
\begin{align*}
\tilde{y}_2&\leq a\mathbf{1}'(\tilde{y}_1,y_2,\ldots,y_{T-1})'\\
\tilde{y}_3&\leq a\mathbf{1}'(\tilde{y}_1,\tilde{y}_2,y_3,\ldots,y_{T-1})'\\
&\vdots\\
\tilde{y}_{T-1}&\leq a\mathbf{1}'(\tilde{y}_1,\ldots,\tilde{y}_{T-2},y_{T-1})'.
\end{align*}
Hence
$$\max_{t=1,\ldots,T-1}\tilde{y}_t\leq\max_{t=1,\ldots,T-1}\sum_{s=1}^TA_{ts}y_s$$
where $A_{ts}:=A_{ts}(\beta)$ are constants that go to 0 as $\beta\to0$. Therefore, when $\beta$ is small enough, $d(\mathcal K(\mathbf{L}_{T-1}),\mathcal K(\mathbf{L}_{T-1}'))\leq wd(\mathbf{L}_{T-1},\mathbf{L}_{T-1}')$ for some $0<w<1$, and $\mathcal K$ is a contraction. By Banach fixed point theorem, there exists a unique fixed point $\mathbf L^*$. Moreover, as a consequence, starting from any initial value $\mathbf{L}^{(1)}=\mathbf{L}\in\mathcal{L}(M)^{T-1}$, the recursion $\mathbf{L}^{(k+1)}=\mathcal K(\mathbf{L}^{(k)})$ satisfies $\mathbf{L}^{(k)}\stackrel{d}{\to}\mathbf{L}^*$ where $\mathbf{L}^*$ is the fixed point of $\mathcal K$.
\\

\noindent\underline{Identical Components}:
It remains to show that all components of $\mathbf{L}^*$ are the same. Denote $\mathbf{L}^*=(L_1^*,\ldots,L_{T-1}^*)$. By definition $\mathcal K(\mathbf{L}^*)=\mathbf{L}^*$. So, using \eqref{relation K}, we have
\begin{align*}
\tilde{L}_1&=K(L_1^*,\ldots,L_{T-1}^*)=L_1^*\\
\tilde{L}_2&=K(\tilde{L}_1,L_2^*,\ldots,L_{T-1}^*)=K(L_1^*,L_2^*,\ldots,L_{T-1}^*)=L_2^*\\
\tilde{L}_3&=K(\tilde{L}_1,\tilde{L}_2,L_3^*,\ldots,L_{T-1}^*)=K(L_1^*,L_2^*,L_3^*,\ldots,L_{T-1}^*)=L_3^*\\
&\vdots\\
\tilde{L}_{T-1}&=K(\tilde{L}_1,\ldots,\tilde{L}_{T-2},L_{T-1}^*)=K(L_1^*,\ldots,L_T^*,L_{T-1}^*)=L_{T-1}^*.
\end{align*}
Hence $L_1^*=L_2^*=\cdots=L_{T-1}^*=\tilde{L}_1=\cdots=\tilde{L}_{T-1}=K(L_1^*,\ldots,L_{T-1}^*)$. This concludes the lemma.

\end{proofof}

\begin{proofof}{Corollary \ref{corollary:component}}
By Lemma \ref{lemma:multivariate fixed point}, $\mathcal{K}$ has a fixed point in $\mathcal{L}(M)^{T-1}$ that has all equal components. Since convergence in $\mathcal{L}(M)^{T-1}$ implies convergence in each component in $\mathcal{L}(M)$ (in the $\|\cdot\|_\Lambda$-norm), and that by construction $K(L_1,L_2,\ldots,L_{T-1})=K(L_{T-1},L_1,L_2,\ldots,L_{T-2})$ for any $L_1,L_2,\ldots,L_{T-1}\in\mathcal L(M)$, the result follows.
\end{proofof}

\begin{proofof}{Lemma \ref{lemma:multivariate monotonicity}}
Consider
\begin{eqnarray*}
&&E_0\left[S_h(\mathbf{X}_T)\prod_{t=1}^TL^{(k+t-1)}(X_t)\right]-\alpha(T-1)!\sum_{t=1}^TE_0[L^{(k+t-1)}\log L^{(k+t-1)}]\\
&=&E_0\left[E_0\left[S_h(\mathbf{X}_T)\prod_{t=2}^TL^{(k+t-1)}(X_t)\Bigg|X_1\right]L^{(k)}(X_1)\right]-\alpha (T-1)!E_0[L^{(k)}\log L^{(k)}]{}\\
&&{}-\alpha(T-1)!\sum_{t=2}^TE_0[L^{(k+t-1)}\log L^{(k+t-1)}]\\
&\leq&E_0\left[E_0\left[S_h(\mathbf{X}_T)\prod_{t=2}^TL^{(k+t-1)}(X_t)\Bigg|X_1\right]L^{(k+T)}(X_1)\right]-\alpha(T-1)! E_0[L^{(k+T)}\log L^{(k+T)}]{}\\
&&{}-\alpha(T-1)!\sum_{t=2}^TE_0[L^{(k+t-1)}\log L^{(k+t-1)}]\\
&=&E_0\left[S_h(\mathbf{X}_T)\prod_{t=1}^TL^{(k+t)}(X_t)\right]-\alpha(T-1)!\sum_{t=1}^TE_0[L^{(k+t)}\log L^{(k+t)}].
\end{eqnarray*}
The inequality holds because $L^{(k+T)}=K(L^{(k+1)},\ldots,L^{(k+T-1)})$, which, by Proposition \ref{optimal L} and the definition of $K$, maximizes the objective $E_0\left[E_0\left[S_h(\mathbf{X}_T)\prod_{t=2}^TL^{(k+t-1)}(X_t)\Bigg|X_1\right]L(X_1)\right]-\alpha(T-1)!E_0[L\log L]$ over $L$. The last equality can be seen by the invariance of $S_h$ over permutations of its arguments, and relabeling $X_2$ by $X_1$, $X_3$ by $X_2$, up to $X_T$ by $X_{T-1}$ and $X_1$ by $X_T$.
\end{proofof}

\begin{proofof}{Lemma \ref{lemma:multivariate convergence}}
We consider convergence of the first and the second terms of \eqref{convergence multivariate} separately. For the first term, consider
\begin{eqnarray}
&&\left|E_0\left[S_h(\mathbf{X}_T)\prod_{t=1}^TL^{(k+t-1)}(X_t)\right]-E_0\left[S_h(\mathbf{X}_T)\prod_{t=1}^TL^*(X_t)\right]\right| \notag\\
&\leq&E_0\left[|S_h(\mathbf{X}_T)|\left|\prod_{t=1}^TL^{(k+t-1)}(X_t)-\prod_{t=1}^TL^*(X_t)\right|\right] \notag\\
&\leq&(T-1)!E_0\left[\sum_{t=1}^T\Lambda(X_t)\sum_{s=1}^T\dot{\underline{L}}_T^s|L^{(k+s-1)}(X_s)-L^*(X_s)|\right] \notag\\
&&\text{\ \ where each $\dot{\underline{L}}_T^s$ is product of either one of $L^{(k+r-1)}(X_r)$ or $L^*(X_r)$ for $r=1,\ldots,T,\ r\neq s$} \notag\\
&\leq&C\sum_{s=1}^TE_0[(1+\Lambda(X))|L^{(k+s-1)}(X)-L^*(X)|]\text{\ \ for some constant $C>0$} \notag\\
&\to&0 \label{multivariate convergence1}
\end{eqnarray}
as $k\to\infty$, since $L^{(k)}\to L^*$ in $\|\cdot\|_\Lambda$-norm by Corollary \ref{corollary:component}.

We now consider the second term in \eqref{convergence multivariate}. By the recursion of $K$, we have, for $k\geq 1$,
\begin{eqnarray}
&&|E_0[L^{(k+T-1)}\log L^{(k+T-1)}]-E_0[L^*\log L^*]| \notag\\
&=&\Bigg|\left(\frac{\beta}{(T-1)!}E_0\left[S_h(\mathbf{X}_T)\prod_{t=1}^TL^{(k+t-1)}(X_t)\right]-\log E_0[e^{\beta E_0[S_h(X,\mathbf{X}_{T-1})\prod_{t=1}^{T-1}L^{(k+t-1)}(X_t)|X]/(T-1)!}]\right){} \notag\\
&&{}-\left(\frac{\beta}{(T-1)!}E_0\left[S_h(\mathbf{X}_T)\prod_{t=1}^TL^*(X_t)\right]-\log E_0[e^{\beta E_0[S_h(X,\mathbf{X}_{T-1})\prod_{t=1}^{T-1}L^*(X_t)|X]/(T-1)!}]\right)\Bigg| \notag\\
&=&\frac{\beta}{(T-1)!}\left|E_0\left[S_h(\mathbf{X}_T)\prod_{t=1}^TL^{(k+t-1)}(X_t)\right]-E_0\left[S_h(\mathbf{X}_T)\prod_{t=1}^TL^*(X_t)\right]\right|{} \notag\\
&&{}+|\log E_0[e^{\beta E_0[S_h(X,\mathbf{X}_{T-1})\prod_{t=1}^{T-1}L^{(k+t-1)}(X_t)|X]/(T-1)!}]-\log E_0[e^{\beta E_0[S_h(X,\mathbf{X}_{T-1})\prod_{t=1}^{T-1}L^*(X_t)|X]/(T-1)!}]|. \label{interim KL multivariate}
\end{eqnarray}
The first term in \eqref{interim KL multivariate} converges to 0 by the same argument as in \eqref{multivariate convergence1}. For the second term, we can write, by mean value theorem, that
\begin{eqnarray}
&&|\log E_0[e^{\beta E_0[S_h(X,\mathbf{X}_{T-1})\prod_{t=1}^{T-1}L^{(k+t-1)}(X_t)|X]/(T-1)!}]-\log E_0[e^{\beta E_0[S_h(X,\mathbf{X}_{T-1})\prod_{t=1}^{T-1}L^*(X_t)|X]/(T-1)!}]| \notag\\
&=&\frac{1}{\xi_1}|E_0[e^{\beta E_0[S_h(X,\mathbf{X}_{T-1})\prod_{t=1}^{T-1}L^{(k+t-1)}(X_t)|X]/(T-1)!}]-E_0[e^{\beta E_0[S_h(X,\mathbf{X}_{T-1})\prod_{t=1}^{T-1}L^*(X_t)|X]/(T-1)!}]| \label{interim2 KL multivariate}
\end{eqnarray}
where $\xi_1$ lies between $E_0[e^{\beta E_0[S_h(X,\mathbf{X}_{T-1})\prod_{t=1}^{T-1}L^{(k+t-1)}(X_t)|X]/(T-1)!}]$ and $E_0[e^{\beta E_0[S_h(X,\mathbf{X}_{T-1})\prod_{t=1}^{T-1}L^*(X_t)|X]/(T-1)!}]$, and hence $\xi_1\geq E_0[e^{-\beta \Lambda(X)}]e^{-\beta E_0[\Lambda(X)L(X)]}\geq1-\epsilon$ for some small $\epsilon>0$, when $\beta$ is small enough, by a similar argument as in the proof of Lemma \ref{lemma:multivariate fixed point}. Moreover,
\begin{eqnarray*}
&&|E_0[e^{\beta E_0[S_h(X,\mathbf{X}_{T-1})\prod_{t=1}^{T-1}L^{(k+t-1)}(X_t)|X]/(T-1)!}]-E_0[e^{\beta E_0[S_h(X,\mathbf{X}_{T-1})\prod_{t=1}^{T-1}L^*(X_t)|X]/(T-1)!}]|{}\\
&\leq&\beta E_0\left[e^{\beta \xi_2}|S_h(X,\mathbf{X}_{T-1})|\left|\prod_{t=1}^{T-1}L^{(k+t-1)}(X_t)-\prod_{t=1}^{T-1}L^*(X_t)\right|\right]
\end{eqnarray*}
for some $\xi_2$ lying between $E_0[S_h(X,\mathbf{X}_{T-1})\prod_{t=1}^{T-1}L^{(k+t-1)}(X_t)|X]/(T-1)!$ and $E_0[S_h(X,\mathbf{X}_{T-1})\prod_{t=1}^{T-1}L^*(X_t)|X]/(T-1)!$. Hence, much like the argument in proving the contraction property in Lemma \ref{lemma:multivariate fixed point}, we have $\xi_2\leq \Lambda(X)+(T-1)M$ and \eqref{interim2 KL multivariate} is less than or equal to
$$C\beta\max_{t=1,\ldots,T-1}E_0[(1+\Lambda(X))|L^{(k+t-1)}(X)-L^*(X)|]\to0$$
as $k\to\infty$ for some $C>0$. This concludes the lemma.
\end{proofof}

\subsection{Proof of Theorem \ref{thm:fixed time}}\label{sec:proof asymptotics}
For convenience, denote $\beta=1/\alpha^*>0$, so $\beta$ is small when $\alpha^*$ is large. Also let $X$ be a generic random variable with distribution $P_0$. Then from \eqref{optimal L fixed time} we have
\begin{equation}
L^*(x)=\frac{e^{\beta g^{L^*}(x)}}{E_0[e^{\beta g^{L^*}(X)}]} \label{L recap}
\end{equation}
where $g^{L^*}(x)=\sum_{t=1}^Tg_t^{L^*}(x)=\sum_{t=1}^TE_0[h(\mathbf{X}_T)\prod_{\substack{1\leq r\leq T\\r\neq t}}L^*(X_r)|X_t=x]$. Also recall that
$$g(x)=\sum_{t=1}^Tg_t(x)=\sum_{t=1}^TE_0[h(\mathbf{X}_T)|X_t=x]$$
as defined in \eqref{gt}, so that $E_0[g(X)]=TE_0[h(\mathbf{X}_T)]$.
Furthermore, let us denote, for any $p\geq1$, $\bar{O}(\beta^p):=\bar{O}(\beta^p;x)$ as a deterministic function in $x$ such that $E_0[h(\mathbf{X}_T)^q\bar{O}(\beta^p;X_t)]=O(\beta^p)$ for any $q\geq1$ and $t=1,\ldots,T$, when $\beta\to0$. Finally, we also let $\psi_L(\beta):=\log E_0[e^{\beta g^L(X)}]$ for convenience.

We first give a quadratic approximation of $L^*$ as $\beta \to0$ (equivalently $\alpha^*\to\infty$). Then we find the relation between $\beta $ and $\eta$, which verifies the optimality condition given in Theorem \ref{nonconvex}. After that we expand the objective value in terms of $\beta $, and hence $\eta$, to conclude Theorem \ref{thm:fixed time}.
\\

\noindent\textbf{Asymptotic expansion of $L^*$:} We shall obtain a quadratic approximation of $L^*$ by first getting a first order approximation of $L^*$ and then iterating via the quantity $g^{L^*}$ to get to the second order. Note that as the logarithmic moment generating function of $g^{L^*}(X)$,
\begin{align}
\psi_{L^*}(\beta )&=\log E_0[e^{\beta g^{L^*}(X)}] \notag\\
&=\beta E_0[g^{L^*}(X)]+\frac{\beta^ 2}{2}\kappa_2(g^{L^*}(X))+\frac{\beta^ 3}{3!}\kappa_3(g^{L^*}(X))+O(\beta^ 4) \label{psi}
\end{align}
where $\kappa_2(g^{L^*}(X)):=E_0[(g^{L^*}(X)-E_0[g^{L^*}(X)])^2]$ and $\kappa_3(g^{L^*}(X)):=[(g^{L^*}(X)-E_0[g^{L^*}(X)])^3]$.
Using \eqref{L recap} and \eqref{psi}, and the finiteness of the exponential moment of $g^{L^*}(X)$ guaranteed by a calculation similar to \eqref{finite exponential moment gL}, we have
\begin{align}
L^*(x)&=\frac{e^{\beta g^{L^*}(x)}}{E_0[e^{\beta g^{L^*}(X)}]}=e^{\beta g^{L^*}(x)-\psi_{L^*}(\beta )} \notag\\
&=1+\beta (g^{L^*}(x)-E_0[g^{L^*}(X)])+\bar{O}(\beta^ 2). \label{L1}
\end{align}
But notice that
\begin{align*}
g^{L^*}(x)&=\sum_{t=1}^TE_0\left[h(\mathbf{X}_T)\prod_{\substack{1\leq r\leq T\\r\neq t}}L^*(X_r)\Bigg|X_t=x\right]\\
&=\sum_{t=1}^TE_0\left[h(\mathbf{X}_T)\prod_{\substack{1\leq r\leq T\\r\neq t}}(1+\bar{O}(\beta;X_r ))\Bigg|X_t=x\right]\\
&=\sum_{t=1}^TE_0[h(\mathbf{X}_T)|X_t=x]+\bar{O}(\beta )\\
&=g(x)+\bar{O}(\beta )
\end{align*}
and hence $E_0[g^{L^*}(X)]=E_0[g(X)]+O(\beta )$. Consequently, from \eqref{L1} we have
\begin{equation}
L^*(x)=1+\beta (g(x)-E_0[g(X)])+\bar{O}(\beta^ 2). \label{L first order}
\end{equation}
This gives a first order approximation of $L^*$. Using \eqref{L first order}, we strengthen our approximation of $g^{L^*}$ to get
\begin{align}
g^{L^*}(x)&=\sum_{t=1}^TE_0\left[h(\mathbf{X}_T)\prod_{\substack{1\leq r\leq T\\r\neq t}}(1+\beta (g(X_r)-E_0[g(X)])+\bar{O}(\beta^ 2))\Bigg|X_t=x\right] \notag\\
&=g(x)+\beta \sum_{t=1}^T\sum_{\substack{1\leq r\leq T\\r\neq t}}E_0[h(\mathbf{X}_T)(g(X_r)-E_0[g(X)])|X_t=x]+\bar O(\beta^ 2) \notag\\
&=g(x)+\beta W(x)+\bar O(\beta^ 2) \label{gL first order}
\end{align}
where we define $W(x):=\sum_{t=1}^T\sum_{\substack{1\leq r\leq T\\r\neq t}}E_0[h(\mathbf{X}_T)(g(X_r)-E_0[g(X)])|X_t=x]$. With \eqref{gL first order}, and using \eqref{psi} again, we then strengthen the approximation of $L^*$ to get
\begin{eqnarray}
L^*(x)&=&e^{\beta g^{L^*}(x)-\psi_{L^*}(\beta )}=e^{\beta (g^{L^*}(x)-E_0[g^{L^*}(X)])-\frac{\beta^ 2}{2}E_0[(g^{L^*}(X)-E_0[ g^{L^*}(X)])^2]+\bar{O}(\beta^ 3)} \notag\\
&=&1+\beta (g^{L^*}(x)-E_0[g^{L^*}(X)])+\frac{\beta^ 2}{2}[(g^{L^*}(x)-E_0[g^{L^*}(X)])^2-E_0[(g^{L^*}(X)-E_0[ g^{L^*}(X)])^2]]{} \notag\\
&&{}+\bar{O}(\beta^ 3) \notag\\
&=&1+\beta (g(x)-E_0[g(X)])+\beta^ 2\Big[W(x)-E_0[W(X)]+\frac{1}{2}((g(x)-E_0[g(X)])^2{} \notag\\
&&{}-E_0[(g(X)-E_0[g(X)])^2])\Big]+\bar{O}(\beta^ 3) \notag\\
&=&1+\beta (g(x)-E_0[g(X)])+\beta^ 2V(x)+\bar{O}(\beta^ 3) \label{L second order}
\end{eqnarray}
where we define $V(x):=W(x)-E_0[W(X)]+\frac{1}{2}((g(x)-E_0[g(X)])^2-E_0[(g(X)-E_0[g(X)])^2])$.
\\


\noindent\textbf{Relation between $\beta $ and $\eta$:} By substituting $L^*$ depicted in \eqref{L recap} into $\eta=E_0[L^*\log L^*]$, we have
\begin{equation}
\eta=E_0[L^*\log L^*]=\beta E_0[g^{L^*}(X)L^*(X)]-\log E_0[e^{\beta g^{L^*}(X)}]=\beta TE_0[h(\mathbf{X}_T)\underline{L}_T^*]-\psi_{L^*}(\beta ) \label{eta relation}
\end{equation}
Using \eqref{psi}, we can write \eqref{eta relation} as
\begin{eqnarray}
&&\beta TE_0[h(\mathbf{X}_T)\underline{L}_T^*]-\beta E_0[g^{L^*}(X)]-\frac{\beta^ 2}{2}\kappa_2(g^{L^*}(X))-\frac{\beta^ 3}{3!}\kappa_3(g^{L^*}(X))+O(\beta^ 4) \notag\\
&=&\beta \sum_{t=1}^TE_0\left[h(\mathbf{X}_T)\prod_{\substack{1\leq r\leq T\\r\neq t}}L^*(X_r)(L^*(X_t)-1)\right]-\frac{\beta^ 2}{2}\kappa_2(g^{L^*}(X))-\frac{\beta^ 3}{3!}\kappa_3(g^{L^*}(X)){} \notag\\
&&{}+O(\beta^ 4). \label{eta relation1}
\end{eqnarray}
We analyze \eqref{eta relation1} term by term. For the first term, using \eqref{L second order}, we have
\begin{eqnarray}
&&\sum_{t=1}^TE_0\left[h(\mathbf{X}_T)\prod_{\substack{1\leq r\leq T\\r\neq t}}L^*(X_r)(L^*(X_t)-1)\right] \notag\\
&=&\sum_{t=1}^TE_0\Bigg[h(\mathbf{X}_T)\prod_{\substack{1\leq r\leq T\\r\neq t}}\left(1+\beta (g(X_r)-E_0[g(X)])+\beta^ 2V(X_r)+\bar{O}(\beta^ 3)\right){} \notag\\
&&{}\cdot\left(\beta (g(X_t)-E_0[g(X)])+\beta^ 2V(X_t)+\bar{O}(\beta^ 3)\right)\Bigg] \notag\\
&=&\beta \sum_{t=1}^TE_0[h(\mathbf{X}_T)(g(X_t)-E_0[g(X)])]+\beta^ 2\Bigg[\sum_{t=1}^T\sum_{\substack{1\leq r\leq T\\r\neq t}}E_0[h(\mathbf{X}_T)(g(X_r)-E_0[g(X)])(g(X_t)-E_0[g(X)])]{} \notag\\
&&{}+\sum_{t=1}^TE_0[h(\mathbf{X}_T)V(X_t)]\Bigg]+O(\beta^ 3) \notag\\
&=&\beta Var_0(g(X))+\beta^ 2[\nu+E_0[g(X)V(X)]]+O(\beta^ 3) \label{first term}
\end{eqnarray}
where $\nu$ is defined in \eqref{nu}. The last equality follows since
\begin{eqnarray*}
&&\sum_{t=1}^TE_0[h(\mathbf{X}_T)(g(X_t)-E_0[g(X)])]=\sum_{t=1}^TE_0[E_0[h(\mathbf{X}_T)|X_t](g(X_t)-E_0[g(X)])]\\
&=&\sum_{t=1}^TE_0[g_t(X)(g(X)-E_0[g(X)])]=E_0[g(X)(g(X)-E_0[g(X)])]=Var_0(g(X)),
\end{eqnarray*}
\begin{eqnarray*}
&&\sum_{t=1}^T\sum_{\substack{1\leq r\leq T\\r\neq t}}E_0[h(\mathbf{X}_T)(g(X_r)-E_0[g(X)])(g(X_t)-E_0[g(X)])]\\
&=&\sum_{t=1}^T\sum_{\substack{1\leq r\leq T\\r\neq t}}E_0[E_0[h(\mathbf{X}_T)|X_r,X_t](g(X_r)-E_0[g(X)])(g(X_t)-E_0[g(X)])]\\
&=&E_0[G(X,Y)(g(X)-E_0[g(X)])(g(Y)-E_0[g(Y)])]=\nu
\end{eqnarray*}
where $G(X,Y)$ is defined in \eqref{G definition}, and
$$\sum_{t=1}^TE_0[h(\mathbf{X}_T)V(X_t)]=\sum_{t=1}^TE_0[E_0[h(\mathbf{X}_T)|X_t]V(X_t)]=E_0[g(X)V(X)].$$
For the second term in \eqref{eta relation1}, by using \eqref{gL first order}, we have
\begin{align}
\kappa_2(g^{L^*}(X))&=E_0[(g^{L^*}(X)-E_0[g^{L^*}(X)])^2] \notag\\
&=E_0[((g(X)-E_0[g(X)])+\beta (W(X)-E_0[W(X)])+\bar{O}(\beta^ 2;X))^2] \notag\\
&=Var_0(g(X))+2\beta E_0[(g(X)-E_0[g(X)])(W(X)-E_0[W(X)])]+O(\beta^ 2). \label{kappa 2 eta}
\end{align}
Now notice that $W(x)$ can be written as
\begin{align*}
W(x)&=\sum_{t=1}^T\sum_{\substack{1\leq r\leq T\\r\neq t}}E_0[h(\mathbf{X}_T)(g(X_r)-E_0[g(X)])|X_t=x]\\
&=\sum_{t=1}^T\sum_{\substack{1\leq r\leq T\\r\neq t}}E_0[E_0[h(\mathbf{X}_T)|X_r,X_t](g(X_r)-E_0[g(X)])|X_t=x]\\
&=E_0[G(X,Y)(g(Y)-E_0[g(Y)])|X=x]
\end{align*}
where $G(X,Y)$ is defined in \eqref{G definition}. Hence
\begin{eqnarray*}
&&E_0[(g(X)-E_0[g(X)])(W(X)-E_0[W(X)])]=E_0[(g(X)-E_0[g(X)])W(X)]\\
&=&E_0[(g(X)-E_0[g(X)])G(X,Y)(g(Y)-E_0[g(Y)])]=\nu.
\end{eqnarray*}
Consequently, \eqref{kappa 2 eta} becomes
\begin{equation}
Var_0(g(X))+2\beta \nu+O(\beta^ 2). \label{kappa 2 eta1}
\end{equation}
Finally, for the third term in \eqref{eta relation1}, we have
\begin{equation}
\kappa_3(g^{L^*}(X))=E_0[(g(X)-E_0[g(X)])^3]+O(\beta )=\kappa_3(g(X))+O(\beta ).
\label{kappa 3 eta}
\end{equation}
Combining \eqref{first term}, \eqref{kappa 2 eta1} and \eqref{kappa 3 eta}, we have
\begin{align}
\eta&=\beta^ 2Var_0(g(X))+\beta^ 3[\nu+E_0[g(X)V(X)]]-\frac{\beta^ 2}{2}Var_0(g(X))-\beta^ 3\nu-\frac{\beta^ 3}{6}\kappa_3(g(X))+O(\beta^ 4) \notag\\
&=\frac{\beta^ 2}{2}Var_0(g(X))+\beta^ 3\left[E_0[g(X)V(X)]-\frac{1}{6}\kappa_3(g(X))\right]+O(\beta^ 4). \label{eta relation2}
\end{align}
Under Assumption \ref{not sup}, and by routinely checking that the term $O(\beta^ 4)$ in \eqref{eta relation2} above is continuous in $\beta$, we can invert \eqref{eta relation2} to get
\begin{align}
\beta &=\sqrt{\frac{2\eta}{Var_0(g(X))}}\left(1+\frac{2\beta (E_0[g(X)V(X)]-(1/6)\kappa_3(g(X)))}{Var_0(g(X))}+O(\beta^ 2)\right)^{-1/2} \notag\\
&=\sqrt{\frac{2\eta}{Var_0(g(X))}}-\frac{1}{2}\sqrt{\frac{2\eta}{Var_0(g(X))}}\ \frac{2\beta (E_0[g(X)V(X)]-(1/6)\kappa_3(g(X)))}{Var_0(g(X))}+O(\eta^{1/2}\beta^ 2) \notag\\
&=\sqrt{\frac{2\eta}{Var_0(g(X))}}-\frac{2\eta(E_0[g(X)V(X)]-(1/6)\kappa_3(g(X)))}{(Var_0(g(X)))^2}+O(\eta^{3/2}). \label{eta relation3}
\end{align}
This in particular verifies the condition in Theorem \ref{nonconvex}, i.e.~for any small $\eta$, there exists a large enough $\alpha^*>0$ and a corresponding $L^*$ that satisfies \eqref{optimality}. This $L^*$ is an optimal solution to \eqref{max L}.
\\

\noindent\textbf{Relation between the objective value and $\beta $, and hence $\eta$:} Using \eqref{L second order} again, the optimal objective value in \eqref{max L} can be written as
\begin{eqnarray}
&&E_0[h(\mathbf{X}_T)\underline{L}_T^*] \notag\\
&=&E_0\left[h(\mathbf{X}_T)\prod_{t=1}^T(1+\beta (g(X_t)-E_0[g(X)])+\beta^ 2V(X_t)+\bar{O}(\beta^ 3;X_t))\right] \notag\\
&=&E_0[h(\mathbf{X}_T)]+\beta \sum_{t=1}^TE_0[h(\mathbf{X}_T)(g(X_t)-E_0[g(X)])]{} \notag\\
&&{}+\beta^ 2\left[\sum_{t=1}^T\sum_{\substack{1\leq r\leq T\\r<t}}E_0[h(\mathbf{X}_T)(g(X_r)-E_0[g(X)])(g(X_t)-E_0[g(X)])]+\sum_{t=1}^TE_0[h(\mathbf{X}_T)V(X_t)]\right]+O(\beta^ 3) \notag\\
&=&E_0[h(\mathbf{X}_T)]+\beta Var_0(g(X))+\beta^ 2\left[\frac{\nu}{2}+E_0[g(X)V(X)]\right]+O(\beta^ 3) \label{objective eta}
\end{eqnarray}
where the last equality follows from similar argument in \eqref{first term}. Finally, substituting \eqref{eta relation3} into \eqref{objective eta} gives
\begin{eqnarray*}
&&E_0[h(\mathbf{X}_T)]+\sqrt{2Var_0(g(X))\eta}+\frac{2\eta}{Var_0(g(X))}\left[-E_0[g(X)V(X)]+\frac{1}{6}\kappa_3(g(X))+\frac{\nu}{2}+E_0[g(X)V(X)]\right]{}\\
&&{}+O(\eta^{3/2})\\
&=&E_0[h(\mathbf{X}_T)]+\sqrt{2Var_0(g(X))\eta}+\frac{\eta}{Var_0(g(X))}\left[\frac{1}{3}\kappa_3(g(X))+\nu\right]+O(\eta^{3/2})
\end{eqnarray*}
which coincides with Theorem \ref{thm:fixed time}.

\subsection{Proof of Proposition \ref{thm:stopping time} under Assumption \ref{independence}}\label{sec:proof stopping time}
Our goal here is to obtain an analog of Proposition \ref{prop:fixed point} for the random time horizon setting under Assumption \ref{independence}. Once this is established, the asymptotic expansion will follow the same argument as the proof of Theorem \ref{thm:fixed time}.

We use a truncation argument. First let us focus on the finite horizon setting, i.e.~cost function is $h(\mathbf X_T)$. We begin by observing that the operator $\bar{\mathcal{K}}:\mathcal{L}(M)\to\mathcal{L}(M)$ defined as
\begin{equation}
\bar{\mathcal{K}}(L)(x)=\frac{e^{g^L(x)/\alpha}}{E_0[e^{g^L(X)/\alpha}]}
\label{definition K T}
\end{equation}
where $g^L(x)$ is defined in \eqref{optimal L fixed time}, possesses similar contraction properties as the operator $\mathcal{K}$ in \eqref{multivariate K} in the following sense:

\begin{lemma}
With Assumption \ref{boundedness} on the cost function $h(\mathbf{X}_T)$, for sufficiently large $\alpha$, the operator $\bar{\mathcal{K}}:\mathcal{L}(M)\to\mathcal{L}(M)$ is well-defined, closed, and a strict contraction in $\mathcal{L}(M)$ under the metric induced by $\|\cdot\|_\Lambda$. Hence there exists a unique fixed point $L^*\in\mathcal{L}(M)$ that satisfies $\bar{\mathcal{K}}(L)=L$. Moreover, $L^*$ is equal to each identical component of the fixed point of $\mathcal{K}$ defined in \eqref{multivariate K}. \label{lemma:K T}
\end{lemma}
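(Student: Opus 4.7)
The plan is to mirror the three-step template already used for $\bar{\mathcal{K}}$ in Lemma \ref{lemma:multivariate fixed point}, exploiting the fact that both operators are built out of the same underlying conditional-expectation structure, and then to glue the two fixed points together in a final step.

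First, I will check that $\mathcal{K}$ is well-defined and closed on $\mathcal{L}(M)$ in the $\|\cdot\|_\Lambda$-metric. The key input is the bound $|S_h(\mathbf{X}_T)|\leq C\sum_{t=1}^{T}\Lambda(X_t)$, which follows immediately from Assumption \ref{boundedness} and the definition \eqref{symmetrization} of $S_h$, since each of its $T$ summands is a permutation of $h(\mathbf{X}_T)$. This, combined with the i.i.d.\ structure of $\mathbf{X}_T$ and the $\mathcal{L}(M)$ constraint $E_0[\Lambda(X)L(X)]\leq M$, lets me reproduce verbatim the finiteness estimate \eqref{finite exponential moment gL} and the lower bound \eqref{lower bound q multivariate} after replacing $\bar{S}_h/(T-1)!$ by $S_h$. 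The closedness check is then identical to \eqref{closedness interim}, with $M$ chosen large compared to $E_0[\Lambda(X)]$.

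Next I will establish contraction. Starting from $\|\mathcal{K}(L)-\mathcal{K}(L')\|_\Lambda$, I apply mean value theorem twice (once to the ratio $e^{\cdot}/E_0[e^{\cdot}]$, once to the exponential inside), exactly as in Step 1 of Section \ref{T2} (the $T=2$ computation culminating in $\|\mathcal{K}(L)-\mathcal{K}(L')\|_\Lambda\leq C\beta\|L-L'\|_\Lambda$). The only new element is that $|S_h(X_1,\mathbf{X}_{2:T})|$ is controlled by $C(\Lambda(X_1)+\sum_{t=2}^T\Lambda(X_t))$ rather than just $\Lambda(X_1)+\Lambda(X_2)$, but after taking conditional expectation and using i.i.d.\ together with $L,L'\in\mathcal{L}(M)$, all the extra $\Lambda(X_t)$ terms integrate to bounded constants and the argument goes through with a modified $C$. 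Banach's fixed point theorem then yields a unique $L^*\in\mathcal{L}(M)$ with $\mathcal{K}(L^*)=L^*$.

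Finally, the identification step. Let $\mathbf{L}^\star=(L^\star,\ldots,L^\star)\in\mathcal{L}(M)^{T-1}$ denote the fixed point of $\bar{\mathcal{K}}$, whose components were shown in Lemma \ref{lemma:multivariate fixed point} to coincide. Plugging this into the component map $K$ defined in \eqref{component K} yields
$$L^\star(x)=K(L^\star,\ldots,L^\star)(x)=\frac{\exp\bigl\{E_0[\bar{S}_h(X,\mathbf{X}_{T-1})\prod_{t=1}^{T-1}L^\star(X_t)\mid X=x]/(\alpha(T-1)!)\bigr\}}{E_0\bigl[\exp\bigl\{E_0[\bar{S}_h(X,\mathbf{X}_{T-1})\prod_{t=1}^{T-1}L^\star(X_t)\mid X]/(\alpha(T-1)!)\bigr\}\bigr]}.$$
Because the $T-1$ factors $L^\star$ act on i.i.d.\ variables, and because $\bar{S}_h$ is a sum over all $T!$ permutations of $T$ arguments, grouping these permutations by the position $j\in\{1,\ldots,T\}$ in which the distinguished argument $X$ is placed produces $T$ groups of $(T-1)!$ permutations each; within a group the remaining $(T-1)!$ reorderings of $X_1,\ldots,X_{T-1}$ give identical conditional expectations by i.i.d.\ symmetry. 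Collecting terms yields
$$E_0\bigl[\bar{S}_h(X,\mathbf{X}_{T-1})\prod_{t=1}^{T-1}L^\star(X_t)\mid X=x\bigr]=(T-1)!\sum_{j=1}^{T}E_0\bigl[h(\mathbf{X}_T)\prod_{r\neq j}L^\star(X_r)\mid X_j=x\bigr]=(T-1)!\,g^{L^\star}(x),$$
and the $(T-1)!$ factors cancel against the normalization in $K$. Hence $L^\star=e^{g^{L^\star}/\alpha}/E_0[e^{g^{L^\star}(X)/\alpha}]=\mathcal{K}(L^\star)$, so $L^\star$ is a fixed point of $\mathcal{K}$ in $\mathcal{L}(M)$. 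Uniqueness from the contraction step forces $L^\star=L^*$, completing the lemma.

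The main obstacle is the last step: one has to recognize that the combinatorial prefactor $1/(T-1)!$ built into $K$ is precisely what compensates for the $T!$-fold symmetrization in $\bar{S}_h$ when it is paired with i.i.d.\ copies, reducing the $\bar{S}_h$-based map to the $S_h$-based map $\mathcal{K}$. All the other pieces are straightforward adaptations of the arguments already written out for $\bar{\mathcal{K}}$.
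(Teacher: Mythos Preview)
Your proposal is correct, but it takes a longer route than the paper. The paper's proof hinges on a single observation made at the very outset: for any $L\in\mathcal{L}(M)$, one has $\mathcal{K}(L)=K(L,\ldots,L)$, where $K$ is the component map \eqref{component K}. This is precisely the combinatorial identity you work out in your final identification step (grouping the $T!$ permutations in $\bar{S}_h$ by the position of the distinguished argument). Having established this identity first, the paper simply inherits well-definedness, closedness, and the contraction estimate for $\mathcal{K}$ directly from the corresponding properties of $K$ already proved inside Lemma \ref{lemma:multivariate fixed point}; no separate re-derivation is needed. The identification of fixed points is then immediate: both $\mathcal{K}$ and $\bar{\mathcal{K}}$ have fixed points satisfying the same equation \eqref{optimal L fixed time}, and uniqueness finishes.

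Your route re-derives the contraction from scratch by adapting the $T=2$ computation, and only invokes the $\bar{S}_h$-to-$S_h$ reduction at the end. This works, but note that your description of the contraction step glosses over one ingredient: for $T>2$ the difference to control is not just $|S_h|$ against more $\Lambda$-terms but also the product difference $\underline{L}_{2:T}-\underline{L}_{2:T}'$, which requires the telescoping bound $|\underline{L}_{2:T}-\underline{L}_{2:T}'|\leq\sum_{s=2}^{T}\dot{\underline{L}}^{s}|L(X_s)-L'(X_s)|$ (exactly as in \eqref{contraction interim}--\eqref{interim L1}) before the $\mathcal{L}(M)$ bounds can be applied. Once you add that, your argument is complete; the paper's version just avoids the repetition by recognizing the diagonal embedding earlier.
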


\begin{proof}
We shall utilize our result on the operator $\mathcal{K}$ in Lemma \ref{lemma:multivariate fixed point}. It is easy to check that given $L\in\mathcal{L}(M)$, $\bar{\mathcal{K}}$ acted on $L$ has the same effect as the mapping $K$, defined in \eqref{component K}, acted on $(L,\ldots,L)\in\mathcal{L}(M)^{T-1}$.
In the proof of Lemma \ref{lemma:multivariate fixed point} we have already shown that $K(L_1,\ldots,L_{T-1})$ for any $(L_1,\ldots,L_{T-1})\in\mathcal{L}(M)$ is well-defined, closed, and a strict contraction under $\|\cdot\|_\Lambda$, when $\alpha$ is large enough (or $\beta=1/\alpha$ is small enough in that proof). These properties are inherited immediately to the operator $\bar{\mathcal{K}}$.

Next, note that \eqref{optimal L fixed time} is the fixed point equation associated with $\bar{\mathcal{K}}$. Moreover, we have already shown in Proposition \ref{prop:fixed point} that the same equation governs the fixed point of $\mathcal{K}$, in the sense that the $T-1$ components of its fixed point are all identical and satisfy \eqref{optimal L fixed time}. By the uniqueness property of fixed points, we conclude that the fixed point of $\bar{\mathcal{K}}$ coincides with each identical component of the fixed point of $\mathcal{K}$.
\end{proof}

Now consider a cost function $h(\mathbf{X}_\tau)$ with a random time $\tau$ that satisfies Assumption \ref{independence}. Again let $\beta=1/\alpha>0$ for convenience. We introduce a sequence of truncated random time $\tau\wedge T$, and define $\tilde{\mathcal{K}}_T:\mathcal{L}\to\mathcal{L}$ and $\tilde{\mathcal{K}}:\mathcal{L}\to\mathcal{L}$ as
$$\tilde{\mathcal{K}}_T(L)(x):=\frac{e^{\beta\tilde g^{L,T}(x)}}{E_0[e^{\beta\tilde g^{L,T}(X)}]}$$
and
$$\tilde{\mathcal{K}}(L)(x):=\frac{e^{\beta\tilde g^L(x)}}{E_0[e^{\beta\tilde g^L(X)}]}$$
where
$$\tilde g^{L,T}(x):=\sum_{t=1}^TE_0[h(\mathbf{X}_{\tau\wedge T})\underline{L}_{\tau\wedge T}^t;\tau\wedge T\geq t|X_t=x]$$
and
$$\tilde g^L(x):=\sum_{t=1}^TE_0[h(\mathbf{X}_\tau)\underline{L}_\tau^t;\tau\geq t|X_t=x].$$
Here $\underline{L}_s^t=\prod_{r=1,\ldots,s,r\neq t}L(X_r)$. In other words, $\tilde{\mathcal{K}}_T$ is the map identical to $\tilde{\mathcal{K}}$ except that $\tau$ is replaced by $\tau\wedge T$.

We first need the following proposition:
\begin{proposition}
Suppose Assumption \ref{independence} holds. For $\beta\leq\epsilon$ for some small $\epsilon>0$, both $\tilde{\mathcal{K}}_T$, for any $T\geq1$, and $\tilde{\mathcal{K}}$ are well-defined, closed and strict contractions with the same Lipschitz constant on the space $\mathcal{L}$ equipped with the metric induced by the $\mathcal{L}_1$-norm $\|L-L'\|_1:=E_0|L-L'|$. \label{contraction stopping time}
\end{proposition}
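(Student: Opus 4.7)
The plan is to exploit two clean consequences of Assumption \ref{independence}: (i) the boundedness of $h(\mathbf{X}_\tau)$ by some constant $B$, and (ii) the independence of $\tau$ from $\{X_t\}_{t\geq1}$, which causes all expectations of products of likelihood ratios (with the $t$-th factor removed) to factor into products of $E_0[L]=1$. First I would use these to show that, for every $L\in\mathcal{L}$ and every $T$ (including $T=\infty$),
\[
\sup_x |g^{L,T}(x)| \leq B\sum_{t=1}^\infty P_0(\tau \wedge T \geq t) = BE_0[\tau\wedge T] \leq BE_0[\tau] < \infty,
\]
because conditioning on $\tau=k$ and on $X_t=x$ and then using independence yields $E_0[\underline{L}_{k\wedge T}^t\,I(k\wedge T\geq t)\mid X_t=x] = I(k\wedge T\geq t)$. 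With a uniform bound on $g^{L,T}$, the normalizing denominator of $\tilde{\mathcal{K}}_T(L)$ is pinched between $e^{-\beta B E_0\tau}$ and $e^{\beta B E_0\tau}$, which immediately gives well-definedness and closedness of $\tilde{\mathcal{K}}_T:\mathcal{L}\to\mathcal{L}$ under the $\mathcal{L}_1$-norm (nonnegativity and unit mean are built into the normalization; integrability follows from the uniform bound). The same argument applies verbatim to $\tilde{\mathcal{K}}$ by passing $T\to\infty$ under dominated convergence, using $E_0[\tau]<\infty$.

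For the contraction, I would proceed as in Lemma \ref{contraction K}: applying the mean value theorem to the ratio $e^{\beta g^{L,T}}/E_0[e^{\beta g^{L,T}}]$ reduces the $\mathcal{L}_1$-distance $\|\tilde{\mathcal{K}}_T(L)-\tilde{\mathcal{K}}_T(L')\|_1$ to a constant times $\beta\,\sup_x|g^{L,T}(x)-g^{L',T}(x)|$, where the constant depends only on $B$ and $E_0\tau$. So the crux is to bound the difference $|g^{L,T}(x)-g^{L',T}(x)|$ by a multiple of $\|L-L'\|_1$, with a multiplier that does not depend on $T$. I would handle this by the standard telescoping identity
\[
\underline{L}_{s}^t - \underline{L'}_{s}^t \;=\; \sum_{\substack{u\leq s\\u\neq t}}\Bigl[\prod_{\substack{r<u\\r\neq t}}L(X_r)\Bigr](L(X_u)-L'(X_u))\Bigl[\prod_{\substack{u<r\leq s\\r\neq t}}L'(X_r)\Bigr],
\]
conditioning on $\tau$ and $X_t=x$, and then using independence and $E_0[L]=E_0[L']=1$ to collapse each expectation to $\|L-L'\|_1$. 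This yields
\[
\Bigl|g^{L,T}(x)-g^{L',T}(x)\Bigr| \;\leq\; B\,\|L-L'\|_1\sum_{t=1}^\infty\sum_{k=1}^\infty P_0(\tau=k)\,I(k\wedge T\geq t)\,((k\wedge T)-1)
\;\leq\; B\,E_0[\tau^2]\,\|L-L'\|_1,
\]
a bound uniform in $T\in\{1,2,\ldots,\infty\}$. Combined with the MVT estimate, I get $\|\tilde{\mathcal{K}}_T(L)-\tilde{\mathcal{K}}_T(L')\|_1 \leq C\beta\,\|L-L'\|_1$ for a constant $C$ depending only on $B$, $E_0\tau$, $E_0\tau^2$; picking $\epsilon$ with $C\epsilon<1$ gives strict contraction with a common Lipschitz constant for all $\tilde{\mathcal{K}}_T$ and for $\tilde{\mathcal{K}}$.

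The main obstacle is precisely the requirement that the Lipschitz constant be \emph{uniform in $T$} (including the untruncated case), which is what is needed later to drive the truncation argument that identifies the fixed point of $\tilde{\mathcal{K}}$ as a limit of the fixed points of $\tilde{\mathcal{K}}_T$. The uniformity is not automatic from the finite-horizon contraction lemmas because the number of summands in $g^{L,T}$ grows with $T$; it is the finiteness of the \emph{second} moment $E_0[\tau^2]$ in Assumption \ref{independence} (rather than just the first moment needed for well-definedness) that absorbs the telescoping length $(k\wedge T)-1$ and yields a $T$-free estimate. Once that calculation is in place, Banach's fixed point theorem applied on $(\mathcal{L},\|\cdot\|_1)$ (which is complete, as a closed subset of $\mathcal{L}_1(P_0)$) furnishes unique fixed points for every $\tilde{\mathcal{K}}_T$ and for $\tilde{\mathcal{K}}$, completing the proposition.
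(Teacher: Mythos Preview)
Your proposal is correct and follows essentially the same route as the paper's proof: bound $g^L$ uniformly via $|h|\leq C$ and independence of $\tau$ to get well-definedness and closedness, then apply the mean value theorem and the telescoping identity on $\underline{L}_\tau^t-\underline{L'}{}_\tau^t$, using independence and $E_0[L]=E_0[L']=1$ to collapse each summand to $\|L-L'\|_1$, arriving at a Lipschitz constant $C\beta$ with $C$ depending on $E_0[\tau(\tau-1)]$ (your $E_0[\tau^2]$ bound is the same up to a harmless constant). Your explicit identification of $E_0[\tau^2]<\infty$ as the mechanism that absorbs the telescoping length and delivers the $T$-uniform Lipschitz constant is exactly the point the paper makes in its final line.
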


\begin{proof}
We first consider the map $\tilde{\mathcal{K}}$. Recall that by Assumption \ref{independence} we have $|h(\mathbf{X}_\tau)|\leq C$ for some constant $C>0$. Consider
\begin{align}
e^{\beta\sum_{t=1}^\infty E_0[h(\mathbf{X}_\tau)\underline{L}_\tau^t;\tau\geq t|X_t=x]}&\leq e^{C\beta\sum_{t=1}^\infty E_0[\underline{L}_\tau^t;\tau\geq t|X_t=x]} \notag\\
&=e^{C\beta\sum_{t=1}^\infty E_0[\underline{L}_\tau^t;\tau\geq t]}\text{\ \ since $\underline{L}_\tau^tI(\tau\geq t)$ is independent of $X_t$} \notag\\
&=e^{C\beta\sum_{t=1}^\infty P_0(\tau\geq t)}\text{\ \ since $\tau$ is independent of $\{X_t\}_{t\geq1}$} \notag\\
&=e^{C\beta E_0\tau} \notag\\
&<\infty\text{\ \ by Assumption \ref{independence}.} \label{stopping interim 1}
\end{align}
Similarly,
\begin{equation}
e^{\beta\sum_{t=1}^\infty E_0[h(\mathbf{X}_\tau)\underline{L}_\tau^t;\tau\geq t|X_t=x]}\geq e^{-C\beta E_0\tau}>0. \label{stopping interim 2}
\end{equation}
Therefore $\tilde{\mathcal{K}}$ is well-defined and also closed in $\mathcal{L}$. To prove that $\tilde{\mathcal{K}}$ is a contraction, consider, for any $L,L'\in\mathcal{L}$,
\begin{align}
E_0|\tilde{\mathcal{K}}(L)-\tilde{\mathcal{K}}(L')|&=E_0\left|\frac{e^{\beta\tilde g^L(X)}}{E_0[e^{\beta\tilde g^L(X)}]}-\frac{e^{\beta\tilde g^{L'}(X)}}{E_0[e^{\beta\tilde g^{L'}(X)}]}\right| \notag\\
&\leq E_0\left[\sup\left|\frac{1}{\xi_2}\right||e^{\beta\tilde g^L(X)}-e^{\beta\tilde g^{L'}(X)}|+\sup\left|\frac{\xi_1}{\xi_2^2}\right||E_0[e^{\beta\tilde g^L(X)}]-E_0[e^{\beta\tilde g^{L'}(X)}]|\right] \label{stopping interim 3}
\end{align}
by mean value theorem, where $(\xi_1,\xi_2)$ lies in the line segment between $(e^{\beta\tilde g^L(X)},E_0[e^{\beta\tilde g^L(X)}])$ and $(e^{\beta\tilde g^{L'}(X)},E_0[e^{\beta\tilde g^{L'}(X)}])$. By \eqref{stopping interim 1} and \eqref{stopping interim 2}, we have $\xi_1\leq e^{C\beta E_0\tau}$ and $\xi_2\geq e^{-C\beta E_0\tau}$ a.s.. So \eqref{stopping interim 3} is less than or equal to
\begin{equation}
2e^{3C\beta E_0\tau}E_0|e^{\beta\tilde g^L(X)}-e^{\beta\tilde g^{L'}(X)}|\leq2e^{3C\beta E_0\tau}\beta E_0|\xi||\tilde g^L(X)-\tilde g^{L'}(X)| \label{stopping interim 4}
\end{equation}
by mean value theorem again, where $\xi$ lies between $e^{\beta\tilde g^L(X)}$ and $e^{\beta\tilde g^{L'}(X)}$ and hence $\xi\leq e^{C\beta E_0\tau}$ a.s.. Therefore \eqref{stopping interim 4} is further bounded by
\begin{equation}
2e^{4C\beta E_0\tau}\beta\sum_{t=1}^\infty E_0[|\underline{L}_\tau^t-{\underline{L}_\tau'}^t|;\tau\geq t]. \label{stopping interim 5}
\end{equation}
Conditioning on $\tau$, $|\underline{L}_\tau^t-{\underline{L}_\tau'}^t|\leq\sum_{s=1,\ldots,\tau,s\neq t}\dot{\underline{L}}_\tau^{t,s}|L_s-L_s'|$ where $\dot{\underline{L}}_\tau^{t,s}$ is a product of either one $L(X_r)$ or $L'(X_r)$ for $r=1,\ldots,\tau,r\neq t,s$. Since $\tau$ is independent of $\{X_t\}_{t\geq1}$ and $\{X_t\}_{t\geq1}$ are i.i.d.~under $P_0$, we have
$E_0[|\underline{L}_\tau^t-{\underline{L}_\tau'}^t||\tau]\leq(\tau-1)E_0|L-L'|$. Consequently, \eqref{stopping interim 5} is bounded by
$$2e^{4C\beta E_0\tau}\beta\sum_{t=1}^\infty E_0[\tau-1;\tau\geq t]E_0|L-L'|=2e^{4C\beta E_0\tau}\beta E_0[\tau(\tau-1)]E_0|L-L'|\leq wE_0|L-L'|$$
for some $w<1$ when $\beta$ is small enough, since $E_0\tau^2<\infty$ by Assumption \ref{independence}.

Finally, we note that the above arguments all hold with $\tau$ replaced by $\tau\wedge T$, in the same range of $\beta$ and with the same Lipschitz constant $w$. This concludes the proposition.
\end{proof}

Next we show that $\tilde{\mathcal{K}}_T\to\tilde{\mathcal{K}}$ pointwise on $\mathcal{L}$:
\begin{proposition}
Suppose Assumption \ref{independence} is in hold. For any $L\in\mathcal{L}$, we have $\tilde{\mathcal{K}}_T(L)\to\tilde{\mathcal{K}}(L)$ in $\|\cdot\|_1$, uniformly on $\beta\leq\epsilon$ for some small $\epsilon>0$. \label{pointwise convergence}
\end{proposition}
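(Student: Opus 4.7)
The plan is a two-stage reduction: first show that $g^{L,T}\to g^L$ uniformly in $x$ with a bound independent of $\beta$ as $T\to\infty$, and then transfer this convergence to the normalized exponentials via the same mean-value-theorem calculation already used in the proof of Proposition \ref{contraction stopping time}.

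For the first stage, I would decompose $g^L-g^{L,T}$ index by index. For $t\leq T$, the summands in $g^L(x)$ and $g^{L,T}(x)$ coincide on $\{\tau\leq T\}$ (since there $\tau\wedge T=\tau$ and the indicators $I(\tau\geq t)$ and $I(\tau\wedge T\geq t)$ agree); on $\{\tau>T\}$ both indicators equal $1$ and the surviving contribution is $E_0[h(\mathbf{X}_\tau)\underline{L}_\tau^t-h(\mathbf{X}_T)\underline{L}_T^t;\tau>T\mid X_t=x]$. For $t>T$ only $g^L$ contributes, leaving $E_0[h(\mathbf{X}_\tau)\underline{L}_\tau^t;\tau\geq t\mid X_t=x]$. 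The crucial simplification, already exploited in Proposition \ref{contraction stopping time}, is that $\underline{L}_\tau^t$ never involves $L(X_t)$ and $\tau$ is independent of $\{X_s\}$ by Assumption \ref{independence}; conditioning on $\tau$ and using $E_0[L]=1$ then gives $E_0[\underline{L}_\tau^t\mid X_t,\tau]\equiv 1$, and likewise for $\underline{L}_T^t$. Combined with $|h|\leq C$ from Assumption \ref{independence}, this yields the $x$-free and $\beta$-free estimate
$$\|g^L-g^{L,T}\|_\infty \;\leq\; 2CT\,P_0(\tau>T)+C\sum_{t>T}P_0(\tau\geq t).$$
Since $E_0\tau^2<\infty$, Markov's inequality gives $T P_0(\tau>T)\leq E_0\tau^2/T\to 0$, and the tail $\sum_{t>T}P_0(\tau\geq t)$ vanishes because the full series equals $E_0\tau<\infty$.

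For the second stage, I would rerun the estimate \eqref{stopping interim 1}--\eqref{stopping interim 4} from Proposition \ref{contraction stopping time} with $(g^L,g^{L'})$ replaced by $(g^L,g^{L,T})$. The two-sided a.s.\ bounds $e^{-C\beta E_0\tau}\leq e^{\beta g^L(x)}, e^{\beta g^{L,T}(x)}\leq e^{C\beta E_0\tau}$ remain valid (they use only $|h|\leq C$ and $E_0\tau$, both unaffected by truncation), so two applications of the mean value theorem give
$$\|\tilde{\mathcal{K}}(L)-\tilde{\mathcal{K}}_T(L)\|_1 \;\leq\; 2\beta\,e^{4C\beta E_0\tau}\,\|g^L-g^{L,T}\|_\infty.$$
For $\beta\in(0,\epsilon]$ the prefactor is dominated by $2\epsilon\,e^{4C\epsilon E_0\tau}$, so combining with the first-stage bound yields the claimed convergence uniformly in $\beta$.

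The main obstacle is the bookkeeping in the first stage: one must dissect the three regimes $\{\tau<t\}$, $\{t\leq\tau\leq T\}$, $\{\tau>T\}$ (together with the split $t\leq T$ vs.\ $t>T$) carefully so that only the two genuinely non-canceling pieces survive, and then invoke both independence properties in Assumption \ref{independence} to collapse each conditional expectation into a probability involving only $\tau$. Without independence of $\tau$ from $\{X_t\}$, this collapse fails, which is precisely why the argument under Assumption \ref{bounded stopping time} had to proceed by a different route.
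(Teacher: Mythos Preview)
Your proposal is correct and follows essentially the same route as the paper: both reduce $\|\tilde{\mathcal{K}}_T(L)-\tilde{\mathcal{K}}(L)\|_1$ via two mean-value-theorem steps to a bound on $g^L-g^{L,T}$, then split according to $t\leq T$ versus $t>T$ and $\{\tau\leq T\}$ versus $\{\tau>T\}$, using independence of $\tau$ and boundedness of $h$ to collapse everything to $CTP_0(\tau>T)+C\sum_{t>T}P_0(\tau\geq t)$. The only cosmetic differences are that you package the first reduction as a sup-norm bound $\|g^L-g^{L,T}\|_\infty$ (the paper works directly with the $L^1$ integral), and you invoke $E_0\tau^2<\infty$ via Markov where the paper uses only $E_0\tau<\infty$ (which already forces $TP_0(\tau\geq T)\to 0$).
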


\begin{proof}
Consider
\begin{align}
E_0|\tilde{\mathcal{K}}_T(L)-\tilde{\mathcal{K}}(L)|&=E_0\left|\frac{e^{\beta\tilde g^{L,T}(X)}}{E_0[e^{\beta\tilde g^{L,T}(X)}]}-\frac{e^{\beta\tilde g^{L}(X)}}{E_0[e^{\beta\tilde g^{L}(X)}]}\right| \notag\\
&\leq 2e^{4C\beta E_0\tau}\beta\sum_{t=1}^\infty E_0|h(\mathbf{X}_{\tau\wedge T})\underline{L}_{\tau\wedge T}^tI(\tau\wedge T\geq t)-h(\mathbf{X}_\tau)\underline{L}_\tau^tI(\tau\geq t)| \label{stopping interim 6}
\end{align}
by an argument similar to \eqref{stopping interim 5}. Now consider
\begin{eqnarray*}
&&\sum_{t=1}^\infty E_0|h(\mathbf{X}_{\tau\wedge T})\underline{L}_{\tau\wedge T}^tI(\tau\wedge T\geq t)-h(\mathbf{X}_\tau)\underline{L}_\tau^tI(\tau\geq t)|\\
&=&\sum_{t=1}^\infty E_0[|h(\mathbf{X}_\tau)\underline{L}_\tau^tI(\tau\geq t)-h(\mathbf{X}_\tau)\underline{L}_\tau^tI(\tau\geq t)|;\tau<T]{}\\
&&{}+\sum_{t=1}^\infty E_0[|h(\mathbf{X}_T)\underline{L}_T^tI(T\geq t)-h(\mathbf{X}_\tau)\underline{L}_\tau^tI(\tau\geq t)|;\tau\geq T]\\
&=&\sum_{t=1}^\infty E_0[|h(\mathbf{X}_T)\underline{L}_T^tI(T\geq t)-h(\mathbf{X}_\tau)\underline{L}_\tau^tI(\tau\geq t)|;\tau\geq T]\\
&=&\sum_{t=1}^TE_0[|h(\mathbf{X}_T)\underline{L}_T^t-h(\mathbf{X}_\tau)\underline{L}_\tau^t|;\tau\geq T]+\sum_{t=T+1}^\infty E_0[|h(\mathbf{X}_\tau)|\underline{L}_\tau^t;\tau\geq t]\\
&\leq&2CTP_0(\tau\geq T)+C\sum_{t=T+1}^\infty P_0(\tau\geq t)\text{\ \ for some constant $C>0$}\\
&\to0
\end{eqnarray*}
as $T\to\infty$, since $E_0\tau<\infty$. Hence \eqref{stopping interim 6} converges to 0 uniformly over $\beta\leq\epsilon$ for some small $\epsilon>0$. This concludes the proposition.
\end{proof}

By a simple argument on the continuity of fixed points (see, for example, Theorem 1.2 in \cite{bonsall62}), Proposition \ref{pointwise convergence} implies the following convergence result:
\begin{corollary}
Suppose Assumption \ref{independence} holds. For small enough $\beta$, and letting $L^{(T)}$ and $L^*$ be the fixed points of $\tilde{\mathcal{K}}_T$ and $\tilde{\mathcal{K}}$ respectively, we have $L^{(T)}\stackrel{\mathcal{L}_1}{\to}L^*$.
\end{corollary}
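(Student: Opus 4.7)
The plan is to exploit the standard continuity-of-fixed-points argument, combining the uniform contraction property established in Proposition~\ref{contraction stopping time} with the pointwise convergence of the operators shown in Proposition~\ref{pointwise convergence}. Restrict $\beta$ to the common neighborhood $\beta \leq \epsilon$ supplied by those two propositions, and let $w < 1$ be the common Lipschitz constant for all the $\tilde{\mathcal{K}}_T$ and for $\tilde{\mathcal{K}}$ on $(\mathcal{L}, \|\cdot\|_1)$.

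The core inequality is a triangle-inequality decomposition evaluated at the fixed points. Since $L^{(T)} = \tilde{\mathcal{K}}_T(L^{(T)})$ and $L^* = \tilde{\mathcal{K}}(L^*)$, I would write
\begin{align*}
\|L^{(T)} - L^*\|_1 &= \|\tilde{\mathcal{K}}_T(L^{(T)}) - \tilde{\mathcal{K}}(L^*)\|_1 \\
&\leq \|\tilde{\mathcal{K}}_T(L^{(T)}) - \tilde{\mathcal{K}}_T(L^*)\|_1 + \|\tilde{\mathcal{K}}_T(L^*) - \tilde{\mathcal{K}}(L^*)\|_1 \\
&\leq w\,\|L^{(T)} - L^*\|_1 + \|\tilde{\mathcal{K}}_T(L^*) - \tilde{\mathcal{K}}(L^*)\|_1,
\end{align*}
where the first summand uses the contraction bound from Proposition~\ref{contraction stopping time} applied to $\tilde{\mathcal{K}}_T$ with the common constant $w$. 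Rearranging gives
$$\|L^{(T)} - L^*\|_1 \leq \frac{1}{1-w}\,\|\tilde{\mathcal{K}}_T(L^*) - \tilde{\mathcal{K}}(L^*)\|_1.$$

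To conclude, apply Proposition~\ref{pointwise convergence} with the specific input $L = L^*$: since $L^* \in \mathcal{L}$, we have $\tilde{\mathcal{K}}_T(L^*) \to \tilde{\mathcal{K}}(L^*)$ in $\|\cdot\|_1$ as $T \to \infty$. Plugging this into the displayed bound yields $\|L^{(T)} - L^*\|_1 \to 0$, which is the desired conclusion. No step here is truly an obstacle---the substantive analytic content (uniformity of the contraction constant and the pointwise convergence of the operators, both of which hinge on $E_0 \tau < \infty$ and the boundedness of $h$) has already been absorbed into the two preceding propositions, and the remaining argument is exactly the abstract perturbation lemma alluded to via Bonsall~\cite{bonsall62}.
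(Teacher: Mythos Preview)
Your argument is correct and is precisely the standard continuity-of-fixed-points estimate that the paper invokes by citation (Theorem~1.2 in \cite{bonsall62}) rather than writing out. The paper's ``proof'' is just a one-line reference to that abstract lemma together with Propositions~\ref{contraction stopping time} and~\ref{pointwise convergence}, so your explicit triangle-inequality decomposition and rearrangement simply unpack that reference; there is no substantive difference in approach.
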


Finally, we show that $L^*$ is the optimal solution to the Lagrangian relaxation of \eqref{max stopping time L}:
\begin{proposition}
Under Assumption \ref{independence}, the fixed point $L^*$ of the operator $\tilde{\mathcal{K}}$ maximizes
$$E_0[h(\mathbf{X}_\tau)\underline{L}_\tau]-\alpha E_0[L\log L]$$
when $\alpha$ is large enough. \label{optimal stopping}
\end{proposition}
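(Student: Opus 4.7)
The strategy is to exploit the truncation sequence $\tau\wedge T$ to reduce to the bounded-stopping-time case already handled, and then pass $T\to\infty$ using the convergence $L^{(T)}\to L^*$ from the corollary following Proposition \ref{pointwise convergence} together with the moment control $E_0\tau^2<\infty$ from Assumption \ref{independence}. Throughout I may restrict to $L\in\mathcal{L}$ with $E_0[L\log L]<\infty$, since otherwise the right-hand side is $-\infty$ and the inequality is trivial.

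\textbf{Step 1 (truncated optimality).} For each fixed $T$, the cost $h(\mathbf{X}_{\tau\wedge T})$ satisfies Assumption \ref{bounded stopping time}: $h$ is bounded and $\tau\wedge T\leq T$ a.s.. Hence the development in Section \ref{sec:extension random time} (reducing bounded stopping time to fixed horizon via $E_0[h(\mathbf{X}_{\tau\wedge T})\underline{L}_{\tau\wedge T}]=E_0[h(\mathbf{X}_{\tau\wedge T})\underline{L}_T]$), combined with Proposition \ref{prop:fixed point} and the duality in Proposition \ref{nonconvex} and Section \ref{sec:duality}, implies that for all sufficiently large $\alpha$ the fixed point $L^{(T)}$ of $\tilde{\mathcal{K}}_T$ satisfies
\begin{equation*}
E_0[h(\mathbf{X}_{\tau\wedge T})\underline{L}^{(T)}_{\tau\wedge T}]-\alpha E_0[L^{(T)}\log L^{(T)}]\geq E_0[h(\mathbf{X}_{\tau\wedge T})\underline{L}_{\tau\wedge T}]-\alpha E_0[L\log L]
\end{equation*}
for every admissible $L$. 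Proposition \ref{contraction stopping time} guarantees that all these fixed points exist for the same range of $\alpha$ (equivalently $\beta=1/\alpha$).

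\textbf{Step 2 (passing limits in the non-entropy terms).} The right-hand side converges to $E_0[h(\mathbf{X}_\tau)\underline{L}_\tau]-\alpha E_0[L\log L]$: splitting on $\{\tau\leq T\}$ vs.\ $\{\tau>T\}$, the independence of $\tau$ from $\{X_t\}$ and the martingale property give $|E_0[h(\mathbf{X}_T)\underline{L}_T;\tau>T]|\leq CP_0(\tau>T)\to 0$, while on $\{\tau\leq T\}$ dominated convergence (with envelope $C\underline{L}_\tau$, which is integrable) delivers the stated limit. For the corresponding term in $L^{(T)}$, the key observation is the uniform $L^\infty$ bound $L^{(T)}\leq e^{2C\beta E_0\tau}$, which follows from the fixed-point identity and $|g^{L^{(T)},T}|\leq CE_0[\tau\wedge T]\leq CE_0\tau$. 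Combining this with the $\mathcal{L}_1$-convergence $L^{(T)}\to L^*$ and the telescoping bound
\begin{equation*}
E_0\bigl[\,|\underline{L}^{(T)}_\tau-\underline{L}^*_\tau|\,;\,\tau\leq T\bigr]\leq E_0[\tau]\,\|L^{(T)}-L^*\|_1
\end{equation*}
(from $\underline{L}^{(T)}_k-\underline{L}^*_k=\sum_{j\leq k}(\prod_{i<j}L^*)(L^{(T)}-L^*)(X_j)(\prod_{i>j}L^{(T)})$ and independence) yields $E_0[h(\mathbf{X}_{\tau\wedge T})\underline{L}^{(T)}_{\tau\wedge T}]\to E_0[h(\mathbf{X}_\tau)\underline{L}^*_\tau]$.

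\textbf{Step 3 (the entropy obstacle).} The main difficulty is $E_0[L^{(T)}\log L^{(T)}]$, since entropy is not $\mathcal{L}_1$-continuous. I bypass this by using the fixed-point equation to rewrite
\begin{equation*}
E_0[L^{(T)}\log L^{(T)}]=\beta E_0[L^{(T)}(X)\,g^{L^{(T)},T}(X)]-\log E_0[e^{\beta g^{L^{(T)},T}(X)}].
\end{equation*}
By the i.i.d.\ structure and the collapsing identity $\underline{L}^{(T),t}_{\tau\wedge T}L^{(T)}(X_t)=\underline{L}^{(T)}_{\tau\wedge T}$ on $\{\tau\wedge T\geq t\}$, the first term equals $\beta E_0[(\tau\wedge T)h(\mathbf{X}_{\tau\wedge T})\underline{L}^{(T)}_{\tau\wedge T}]$. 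Its convergence to $\beta E_0[\tau h(\mathbf{X}_\tau)\underline{L}^*_\tau]$ is obtained by the same splitting as in Step~2: the tail piece is bounded by $CT P_0(\tau>T)\leq C E_0[\tau;\tau>T]\to 0$, and the telescoping estimate on $\{\tau\leq T\}$ now yields an upper bound of $CE_0[\tau^2]\|L^{(T)}-L^*\|_1$, so that $E_0\tau^2<\infty$ in Assumption \ref{independence} is precisely what is required. The log-moment-generating term converges because $g^{L^{(T)},T}\to g^{L^*}$ in $\mathcal{L}_1$ (by another application of the same splitting and telescoping) and both are uniformly bounded, permitting bounded convergence. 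Passing $T\to\infty$ in the inequality of Step~1 produces
\begin{equation*}
E_0[h(\mathbf{X}_\tau)\underline{L}^*_\tau]-\alpha E_0[L^*\log L^*]\geq E_0[h(\mathbf{X}_\tau)\underline{L}_\tau]-\alpha E_0[L\log L],
\end{equation*}
which is the proposition. The hard part throughout is the entropy passage; the route around it is the explicit exponential-tilting formula for $L^{(T)}$, which converts entropy into moment-type functionals that inherit the dominated/bounded convergence machinery.
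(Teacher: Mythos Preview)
Your proposal is correct and follows essentially the same route as the paper: truncate to $\tau\wedge T$, invoke Proposition \ref{prop:fixed point} for the bounded-horizon optimality of $L^{(T)}$, and pass $T\to\infty$ on both sides using $L^{(T)}\to L^*$ together with the boundedness of $h$ and the moment conditions on $\tau$; the entropy term is handled in both arguments by rewriting it through the fixed-point identity and reducing to moment-type quantities. The only minor difference is that your collapsing identity produces the factor $(\tau\wedge T)$ in the first entropy piece (so the second-moment hypothesis $E_0\tau^2<\infty$ enters there), whereas the paper places the $E_0\tau^2$ burden entirely on the log-moment-generating term; both routes are valid and lead to the same conclusion.
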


\begin{proofof}{Proposition \ref{optimal stopping}}
In the proof we let $C>0$ be a constant, not necessarily the same every time it shows up. To begin, we use the fact that for any fixed $T$, $L^{(T)}$ is the optimal solution to $E_0[h(\mathbf{X}_{\tau\wedge T})\underline{L}_{\tau\wedge T}]-\alpha E_0[L\log L]$, as a direct consequence of Proposition \ref{prop:fixed point}. Hence we have the inequality
\begin{equation}
E_0[h(\mathbf{X}_{\tau\wedge T})\underline{L}_{\tau\wedge T}]-\alpha E_0[L\log L]\leq E_0[h(\mathbf{X}_{\tau\wedge T})\underline{L}_{\tau\wedge T}^{(T)}]-\alpha E_0[L^{(T)}\log L^{(T)}] \label{prelimit}
\end{equation}
for any $L\in\mathcal{L}$ (since $h$ is bounded we can merely replace $\mathcal{L}(M)$ by $\mathcal{L}$, i.e.~putting $M=\infty$, in Proposition \ref{prop:fixed point}). Here \eqref{prelimit} holds for any $T\geq1$ for $\alpha$ uniformly large (the uniformity can be verified using Proposition \ref{contraction stopping time} and repeating the argument in the proof of Lemma \ref{lemma:multivariate convergence}, noting that $\tau\wedge T\leq\tau$ a.s.). Our main argument consists of letting $T\to\infty$ on both sides of \eqref{prelimit}.

We first show that, for any $L\in\mathcal{L}$, the first term on the left hand side of \eqref{prelimit} converges to $E_0[h(\mathbf{X}_\tau)\underline{L}_\tau]$. Consider
\begin{eqnarray*}
&&|E_0[h(\mathbf{X}_{\tau\wedge T})\underline{L}_{\tau\wedge T}]-E_0[h(\mathbf{X}_\tau)\underline{L}_\tau]|\\
&=&\left|\sum_{t=1}^\infty E_0[h(\mathbf{X}_t)\underline{L}_t]P_0(\tau\wedge T=t)-\sum_{t=1}^\infty E_0[h(\mathbf{X}_t)\underline{L}_t]P_0(\tau=t)\right|\\
&\leq&C\sum_{t=1}^{T-1}|P_0(\tau\wedge T=t)-P_0(\tau=t)|+C|P_0(\tau\geq T)-P_0(\tau=T)|+C\sum_{t=T+1}^\infty P_0(\tau=t)\\
&=&CP_0(\tau>T)+CP_0(\tau\geq T+1)\\
&\to&0
\end{eqnarray*}
as $T\to\infty$, since $E_0\tau<\infty$. Hence the left hand side of \eqref{prelimit} converges to $E_0[h(\mathbf{X}_\tau)\underline{L}_\tau]-\alpha E_0[L\log L]$ for any $L\in\mathcal{L}$. Now consider the right hand side. For the first term, consider
\begin{eqnarray}
&&|E_0[h(\mathbf{X}_{\tau\wedge T})\underline{L}_{\tau\wedge T}^{(T)}]-E_0[h(\mathbf{X}_\tau)\underline{L}_\tau^*]| \notag\\
&=&\left|\sum_{t=1}^\infty E_0[h(\mathbf{X}_t)\underline{L}_t^{(T)}]P_0(\tau\wedge T=t)-\sum_{t=1}^\infty E_0[h(\mathbf{X}_t)\underline{L}_t^*]P_0(\tau=t)\right| \notag\\
&\leq&C\sum_{t=1}^{T-1}E_0|\underline{L}_t^{(T)}-\underline{L}_t^*|P_0(\tau=t)+2C(P_0(\tau\geq T)+P(\tau=T))+C\sum_{t=T+1}^\infty P_0(\tau=t) \notag\\
&\leq&C\sum_{t=1}^TtP_0(\tau=t)E_0|L^{(T)}-L^*|+2C(P_0(\tau\geq T)+P(\tau=T))+CP_0(\tau\geq T+1){} \notag\\
&&{}\text{\ \  by the argument following \eqref{stopping interim 5}} \notag\\
&=&CE_0[\tau;\tau\leq T]E_0|L^{(T)}-L^*|+2C(P_0(\tau\geq T)+P(\tau=T))+CP_0(\tau\geq T+1) \notag\\
&\to&0. \label{stopping interim 7}
\end{eqnarray}
Moreover, for the second term in \eqref{prelimit}, write
$$E_0[L^{(T)}\log L^{(T)}]=\beta E_0[h(\mathbf{X}_{\tau\wedge T})\underline{L}_{\tau\wedge T}^{(T)}]-\log E_0[e^{\beta\tilde g^{L^{(T)},T}(X)}]$$
and
$$E_0[L^*\log L^*]=\beta E_0[h(\mathbf{X}_\tau)\underline{L}_\tau^*]-\log E_0[e^{\beta\tilde g^{L^*}(X)}]$$
by the definition of the fixed points for $\tilde{\mathcal{K}}_T$ and $\tilde{\mathcal{K}}$. To prove that $E_0[L^{(T)}\log L^{(T)}]\to E_0[L^*\log L^*]$, we have to show that $E_0[h(\mathbf{X}_{\tau\wedge T})\underline{L}_{\tau\wedge T}^{(T)}]\to E_0[h(\mathbf{X}_\tau)\underline{L}_\tau^*]$, which is achieved by \eqref{stopping interim 7}, and that $\log E_0[e^{\beta\tilde g^{L^{(T)},T}(X)}]\to\log E_0[e^{\beta\tilde g^{L^*}(X)}]$, which we will show as follows. Consider
\begin{eqnarray}
&&|\log E_0[e^{\beta\tilde g^{L^{(T)},T}(X)}]-\log E_0[e^{\beta\tilde g^{L^*}(X)}]| \notag\\
&\leq&e^{C\beta E_0\tau}|E_0[e^{\beta\tilde g^{L^{(T)},T}(X)}]-E_0[e^{\beta\tilde g^{L^*}(X)}]| \text{\ \ by mean value theorem and the bound in \eqref{stopping interim 2}}\notag\\
&\leq&e^{2C\beta E_0\tau}\beta\sum_{t=1}^\infty E_0|h(\mathbf{X}_{\tau\wedge T}){\underline{L}_{\tau\wedge T}^{(T)}}^tI(\tau\wedge T\geq t)-h(\mathbf{X}_\tau){\underline{L}_\tau^*}^tI(\tau\geq t)| \label{stopping interim 8}
\end{eqnarray}
where ${\underline{L}_{\tau\wedge T}^{(T)}}^t=\prod_{\substack{s=1,\ldots,\tau\wedge T\\s\neq t}}L(X_s)^{(T)}$ and ${\underline{L}_{\tau\wedge T}^*}^t=\prod_{\substack{s=1,\ldots,\tau\wedge T\\s\neq t}}L(X_s)^*$, by arguments similar to \eqref{stopping interim 3}-\eqref{stopping interim 5}. Now
\begin{eqnarray}
&&\sum_{t=1}^\infty E_0|h(\mathbf{X}_{\tau\wedge T}){\underline{L}_{\tau\wedge T}^{(T)}}^tI(\tau\wedge T\geq t)-h(\mathbf{X}_\tau){\underline{L}_\tau^*}^tI(\tau\geq t)| \notag\\
&=&\sum_{t=1}^\infty E_0[|h(\mathbf{X}_\tau){\underline{L}_\tau^{(T)}}^tI(\tau\geq t)-h(\mathbf{X}_\tau){\underline{L}_\tau^*}^tI(\tau\geq t)|;\tau<T]{} \notag\\
&&{}+\sum_{t=1}^\infty E_0[|h(\mathbf{X}_T){\underline{L}_T^{(T)}}^tI(T\geq t)-h(\mathbf{X}_\tau){\underline{L}_\tau^*}^tI(\tau\geq t)|;\tau\geq T]. \label{stopping interim 9}
\end{eqnarray}
Note that the first term in \eqref{stopping interim 9} is bounded by
\begin{eqnarray*}
C\sum_{t=1}^\infty E_0[|{\underline{L}_\tau^{(T)}}^t-{\underline{L}_\tau^*}^t|;\tau\geq t,\tau<T]&\leq& C\sum_{t=1}^{T-1}E_0[\tau-1;t\leq\tau<T]E_0|L^{(T)}-L^*|{}\\
&&{}\text{\ \  by the argument following \eqref{stopping interim 5}}\\
&=&CE_0[\tau(\tau-1);\tau<T]E_0|L^{(T)}-L^*|\\
&\to&0
\end{eqnarray*}
since $E_0\tau^2<\infty$. The second term in \eqref{stopping interim 9} can be written as
\begin{eqnarray*}
&&\sum_{t=1}^TE_0[|h(\mathbf{X}_T){\underline{L}_T^{(T)}}^t-h(\mathbf{X}_\tau){\underline{L}_\tau^*}^tI(\tau\geq t)|;\tau\geq T]+\sum_{t=T+1}^\infty E_0[|h(\mathbf{X}_\tau)|{\underline{L}_\tau^*}^t;\tau\geq t]\\
&\leq&2CTP_0(\tau\geq T)+C\sum_{t=T+1}^\infty P_0(\tau\geq t)\\
&\to&0
\end{eqnarray*}
since $E_0\tau<\infty$. We therefore prove that \eqref{stopping interim 8} converges to 0 and the right hand side of \eqref{prelimit} converges to $E_0[h(\mathbf{X}_\tau)\underline{L}_\tau^*]-\alpha E_0[L^*\log L^*]$. This concludes the proposition.
\end{proofof}

\appendix
\section{Sufficiency Theorem}
\begin{thm}[a.k.a. Chapter 8, Theorem 1 in \cite{luenberger69}]
Consider $\phi(\cdot):\mathcal L\to\mathbb R$ and $\mathcal C$ a subset of $\mathcal L$. Suppose there is an $\alpha^*$, with $\alpha^*\geq0$, and an $L^*\in\mathcal{C}$ such that
\begin{equation}
\phi(L^*)-\alpha^*E_0[L^*\log L^*]\geq \phi(L)-\alpha^*E_0[L\log L] \label{optimality}
\end{equation}
for all $L\in\mathcal{C}$. Then $L^*$ solves
$$\begin{array}{ll}
\max&\phi(L)\\
\text{subject to}&E_0[L\log L]\leq E_0[L^*\log L^*]\\
&L\in\mathcal{C}.
\end{array}$$
\label{nonconvex}
\end{thm}

For the proof for Theorem \ref{thm:basic}, $\mathcal C$ is chosen as $\mathcal L$ and $\phi(L)=E_0[h(X)L]$. For Theorem \ref{thm:fixed time}, $\mathcal C$ is chosen as $\mathcal L(M)$ and $\phi(L)=E_0[h(\mathbf X_T)\underline L_T]$, and for Theorem \ref{thm:stopping time} under Assumption \ref{independence}, $\mathcal C$ as $\mathcal L$ and $\phi(L)=E_0[h(\mathbf X_\tau)\underline L_\tau]$.

\bibliographystyle{abbrv}
\bibliography{bibliography}

\end{document}